\theoremstyle{plain}
\newtheorem{theorem}{Theorem}[section]
\newtheorem{proposition}[theorem]{Proposition}
\newtheorem{lemma}[theorem]{Lemma}
\newtheorem{corollary}[theorem]{Corollary}
\theoremstyle{definition}
\newtheorem{definition}[theorem]{Definition}
\newtheorem{remark}[theorem]{Remark}
\def\be#1{\begin{equation*}#1\end{equation*}}
\def\ben#1{\begin{equation}#1\end{equation}}
\def\besn#1{\begin{equation}\begin{split}#1\end{split}\end{equation}}
\def\given{\typeout{Command 'given' should only be used within bracket command}}
\newcounter{@bracketlevel}
\def\@bracketfactory#1#2#3#4#5#6{
\expandafter\def\csname#1\endcsname##1{%
\addtocounter{@bracketlevel}{1}%
\global\expandafter\let\csname @middummy\alph{@bracketlevel}\endcsname\given%
\global\def\given{\mskip#5\csname#4\endcsname\vert\mskip#6}\csname#4l\endcsname#2##1\csname#4r\endcsname#3%
\global\expandafter\let\expandafter\given\csname @middummy\alph{@bracketlevel}\endcsname
\addtocounter{@bracketlevel}{-1}}%
}
\def\bracketfactory#1#2#3{%
\@bracketfactory{#1}{#2}{#3}{relax}{1mu plus 0.25mu minus 0.25mu}{0.6mu plus 0.15mu minus 0.15mu}
\@bracketfactory{b#1}{#2}{#3}{big}{1mu plus 0.25mu minus 0.25mu}{0.6mu plus 0.15mu minus 0.15mu}
\@bracketfactory{bb#1}{#2}{#3}{Big}{2.4mu plus 0.8mu minus 0.8mu}{1.8mu plus 0.6mu minus 0.6mu}
\@bracketfactory{bbb#1}{#2}{#3}{bigg}{3.2mu plus 1mu minus 1mu}{2.4mu plus 0.75mu minus 0.75mu}
\@bracketfactory{bbbb#1}{#2}{#3}{Bigg}{4mu plus 1mu minus 1mu}{3mu plus 0.75mu minus 0.75mu}
}
\newcounter{ctr}\loop\stepcounter{ctr}\edef\X{\@Alph\c@ctr}%
\edef\csname s\X\endcsname{\noexpand\mathscr{\X}}
\edef\csname c\X\endcsname{\noexpand\mathcal{\X}}
\edef\csname b\X\endcsname{\noexpand\boldsymbol{\X}}
\edef\csname I\X\endcsname{\noexpand\mathbbm{\X}}
\edef\csname r\X\endcsname{\noexpand\mathrm{\X}}
\edef\csname til\X\endcsname{\noexpand\widetilde{\X}}
\def\now{%
\minute=\time%
\hour=\time \divide \hour by 60%
\hourMins=\hour \multiply\hourMins by 60%
\advance\minute by -\hourMins%
\zeroPadTwo{\the\hour}:\zeroPadTwo{\the\minute}%
}
\def\zeroPadTwo#1{\ifnum #1<10 0\fi#1}
\numberwithin{equation}{section}
\def\blfootnote{\xdef\@thefnmark{}\@footnotetext}
\def\^#1{\ifmmode {\mathaccent"705E #1} \else {\accent94 #1} \fi}
\def\~#1{\ifmmode {\mathaccent"707E #1} \else {\accent"7E #1} \fi}
\edef\-#1{\noexpand\ifmmode {\noexpand\bar{#1}} \noexpand\else \-#1\noexpand\fi}
\def\>#1{\vec{#1}}
\def\wt#1{\widetilde{#1}}
\def\atop{\@@atop}
\renewcommand{\leq}{\leqslant}
\renewcommand{\geq}{\geqslant}
\renewcommand{\phi}{\varphi}
\newcommand{\eps}{\varepsilon}
\newcommand{\eq}{\eqref}
\newcommand{\lito}{\mathrm{o}}
\newcommand{\Var}{\mathop{\mathrm{Var}}\nolimits}
\newcommand{\Cov}{\mathop{\mathrm{Cov}}}
\newcommand{\bfa}{\textbf{\textit{a}}}
\newcommand{\bfb}{\textbf{\textit{b}}}
\newcommand{\bfc}{\textbf{\textit{c}}}
\newcommand{\bfm}{\textbf{\textit{m}}}
\newcommand{\bfr}{\textbf{\textit{r}}}
\newcommand{\beats}{\succ}
\newcommand{\bfx}{\textbf{x}}
\newcommand{\Maj}{\mathrm{Maj}}
\DeclareMathOperator{\sgn}{sgn}
\DeclareMathOperator*{\EE}{\mathbb{E}}
\DeclareMathOperator{\diag}{diag}
\DeclareMathOperator{\Bin}{Bin}
\DeclareMathOperator{\erf}{erf}
\DeclareMathOperator{\supp}{supp}
\newcommand{\1}{\mathbbm{I}}
\definecolor{DSgray}{cmyk}{0,0,0,0.7}
\definecolor{DSred}{cmyk}{0,0.7,0,0.7}
\renewcommand{\Pr}{\IP}
\def\R{\mathbb{R}}
 \def\E{\mathbb{E}}
\def\lmto{\longmapsto}
\begin{document}

\title{The Probability of Intransitivity in Dice and Close Elections}

\author{
Jan Hązła\thanks{EPFL, {\tt jan.hazla@epfl.ch}.}\and
Elchanan Mossel\thanks{Massachusetts Institute of Technology, {\tt elmos@mit.edu}.
  E.M.~and J.H.~were partially supported by awards ONR N00014-16-1-2227,  
  NSF CCF-1665252 and DMS-1737944. E.M.~was partially supported by
the Simons Investigator award (622132).}\and
Nathan Ross\thanks{University of Melbourne, {\tt nathan.ross@unimelb.edu.au}.
  Partially supported by ARC DP150101459.}\and
\and
Guangqu Zheng\thanks{University of Kansas, {\tt zhengguangqu@gmail.com}.
  Partially supported by ARC DP150101459.}
}
\date{}
\maketitle

\begin{abstract}
  We study the phenomenon of intransitivity in models of dice
  and voting.

  First, we follow a recent thread of research for $n$-sided dice
  with pairwise ordering induced by the probability, relative to $1/2$,
  that a throw from one die is higher than the other. We build on a
  recent result of Polymath showing that three
  dice with i.i.d.~faces drawn from the uniform distribution on $\{1,\ldots,n\}$
  and conditioned on the average of faces equal to $(n+1)/2$
  are intransitive with asymptotic probability $1/4$. 
  We show that if dice faces are drawn from a non-uniform
  continuous mean zero distribution conditioned on the average of faces
  equal to $0$, then three dice are transitive with high probability.
  We also extend our results to  stationary
  Gaussian dice, whose faces, for example, can be  the fractional Brownian increments with   Hurst index $H\in(0,1)$.

  Second, we pose an analogous model in the context of Condorcet voting. 
  We consider $n$ voters who rank $k$ alternatives independently and uniformly
  at random. 
  The winner between each two alternatives is decided by a majority vote based 
  on the preferences. We show that in this model, if all pairwise elections 
 are close to tied, then the asymptotic probability of obtaining any 
 tournament on the $k$ alternatives is equal to 
 $2^{-k(k-1)/2}$, which markedly differs from known results in the
 model without conditioning.
 We also explore the Condorcet voting model where methods other than 
 simple majority are used for pairwise elections. We investigate some natural 
 definitions of ``close to tied'' for general functions and exhibit an example 
 where the distribution over tournaments is not uniform under those definitions. 
\end{abstract}

\section{Introduction}

The phenomenon of intransitivity often arises when one ranks three or more alternatives. 
An early example is the Condorcet paradox, discovered in the 18th century in the context of voting. This type of intransitivity is much more general, as proved by Arrow in his social choice theorem~\cite{Arr50}.  
A different fascinating aspect of intransitivity arises in the context of games of chance: 
The striking phenomenon of non-transitive dice. It was discovered by the statistician Brad Efron~\cite{Gardner70} and has fans such as Warren Buffet (who reportedly tried to trick Bill Gates~\cite{Lowe01}). 
The main motivating question of this paper is: What is the chance of observing intransitivity in natural random setups?
We present some quantitative answers to this question. 
We introduce and discuss our results for dice and voting separately, making comparisons between the two settings where appropriate.

\subsection{Intransitive dice: Transitivity of non-uniform dice}

For the purposes of this paper, we call an \emph{$n$-sided die}
(think of gambling dice) any vector
{\it$\bfa = (a_1, \ldots, a_n)$} of real numbers. The \emph{face-sum} of a die
{\it$\bfa$} is $\sum_{i=1}^n a_i$.
We say that  die
{\it$\bfa$} beats die {\it$\bfb$}, denoted {\it$\bfa \beats \bfb$}, if 
a uniformly random face of {\it$\bfa$} has greater value than a random face of {\it$\bfb$}.
In other words, {\it$\bfa \beats \bfb$} if
$$\left(\sum_{i,j=1}^n \1[a_i > b_j] - \1[a_i < b_j]\right) > 0.$$
We call a finite set of $n$-sided dice
\emph{intransitive} if the ``beats'' relation on the set cannot be extended
to a linear order. That is, a set of dice is intransitive if it contains a subset
{\it$\bfa^{(1)}, \ldots, \bfa^{(k)}$} such that
{\it$\bfa^{(1)} \beats \bfa^{(2)} \beats \ldots \beats \bfa^{(k)} \beats \bfa^{(1)}$}.
A well-known example with three sides is {\it$\bfa = (2, 4, 9)$, $\bfb = (1, 6, 8)$}
and {\it$\bfc = (3, 5, 7)$}. One checks that
{\it$\bfa \beats \bfb \beats \bfc \beats \bfa$.}
If a set of dice forms a linear ordering, then we call it \emph{transitive}.
Because of ties, there can be sets that are neither transitive nor intransitive,
but they occur with negligible probability in the models we study.

Recently, there has been some interest in the quantitative study of intransitive
dice. The main quantity of interest is the probability that three independent dice are
transitive, under different random models.
In particular, as the number of faces grows,
the dice can behave \emph{transitively}, i.e.,
such that a  triple of random dice is transitive with
high probability. At the other end of the spectrum, there can be behavior that we call,
borrowing the term from Kalai's paper on social choice
\cite{Kal10}, \emph{chaotic}: in that regime, three dice are intransitive
with probability\footnote{By considering paths of length two in the
  tournament graph on dice according to the ``beats''
  relation, one can see that $1/4$ is the highest possible probability
  of intransitivity (see~\cite{PolVII}).
  }
approaching $1/4$.

Some (mostly) experimental results
were presented by Conrey, Gabbard, Grant, Liu and Morrison
\citep{CGGLM}.
Among others, they conjectured that the model where
$n$-sided dice are sampled uniformly from multisets of integers between
$1$ and $n$ conditioned on the face-sum equal
to $n(n+1)/2$ is chaotic. A recent collaborative Polymath project
\citep{Pol17} proved this conjecture for a related, but not identical, model
where a die is a random sequence of integers between $1$ and $n$ conditioned on
the face-sum equal to $n(n+1)/2$.

One may wonder what happens without the face-sum conditioning.
In that case it can be seen in
\citep{PolVII} that if the faces are only i.i.d. (with distribution depending on $n$),
then as soon as the face-sums of dice {\it$\bfa$} and {\it$\bfb$}
differ by significantly more than $n \log n$,
the die with the higher face-sum beats the other one with high probability.
In particular, three random dice with uniform faces from $\{1,\ldots,n\}$ without
conditioning are transitive with high probability.

One might just as well study dice with faces drawn from a continuous probability
distribution. In particular, experiments and intuition strongly suggest
that the model where faces are uniform in $(-1, 1)$ and conditioned
on face-sum equal zero is, as in the discrete case,
chaotic.

Our first result indicates that this behavior is quite fragile. If the uniform
faces are replaced with any other continuous distribution (satisfying
some reasonable assumptions), then whether a die beats another is
determined by the value of a real function of the faces of each die
and the model becomes transitive.
\begin{theorem}
  \label{con:weak}
  Take $\bfa$, $\bfb$ and $\bfc$ to be three independent $n$-sided dice
  with i.i.d. faces.
  Assume that the distribution of a single face has density  {\rm(PDF)}
  $f$ and {\rm CDF} $F$, mean zero and variance one.
  Let $\cE_0$ denote the event that the face-sums of $\bfa$, $\bfb$ and $\bfc$
  are all zero.
  Additionally, assume that the  distribution of a single face:
  \begin{itemize}
  \item Has enough {\rm (say, six)} finite moments.
  \item Has PDF $f$ supported on a {\rm (possibly infinite)} closed interval
    $\supp(f)$. Furthermore, $f$ is continuous on $\supp(f)$.
  \item Is  \emph{not} uniform on $\big[-\sqrt{3}, \sqrt{3} \,\big]$.
  \end{itemize}
  Then:
  \begin{enumerate}
  \item
    Conditional on $\cE_0$, with probability tending to one as
    $n \to \infty$,
    \begin{align*}
      \text{$\bfa$ beats $\bfb$ if and only if }
      \sum_{i=1}^n F(a_i) > \sum_{i=1}^n F(b_i) \; .
    \end{align*}
  \item
    As $n \to \infty$,
    $\Pr \left[ \bfa, \bfb, \bfc \text{ are transitive} \mid \cE_0 \right]
    \to 1$.
  \end{enumerate}
\end{theorem}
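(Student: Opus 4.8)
\emph{Proof proposal.} First I would reduce the statement to a combinatorial core. Write $W_{\bfa\bfb}=\sum_{i,j=1}^{n}\sgn(a_i-b_j)$; since ties have probability zero by continuity, $\bfa\beats\bfb$ is exactly the event $W_{\bfa\bfb}>0$. The second assertion should then be soft given the first: conditioning on $\cE_0$ keeps $\bfa,\bfb,\bfc$ mutually independent (it is an intersection of three single–die events), the numbers $\sum_iF(a_i),\sum_iF(b_i),\sum_iF(c_i)$ are a.s.\ distinct, and on the $\cE_0$–conditional event of probability $1-\lito(1)$ on which the equivalence of part~1 (which by symmetry applies to each of the three pairs) holds simultaneously, the ``beats'' relation on $\{\bfa,\bfb,\bfc\}$ coincides with the linear order those three numbers induce and is therefore transitive. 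So all the content is in part~1, and by the same product structure it suffices to study $\bfa,\bfb$ conditioned on $\sum_ia_i=\sum_ib_i=0$.

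For part~1 the plan is a Hájek/Hoeffding decomposition of the two–sample statistic $W_{\bfa\bfb}$. Using $\EE[\sgn(a_1-b_1)]=0$ and $\EE[\sgn(x-b_1)]=2F(x)-1$, its projection onto sums of single–face functions is
\[
L_{\bfa\bfb}\;=\;\sum_{i=1}^{n}n\bigl(2F(a_i)-1\bigr)+\sum_{j=1}^{n}n\bigl(1-2F(b_j)\bigr)\;=\;2n\Bigl({\textstyle\sum_{i=1}^{n}F(a_i)-\sum_{j=1}^{n}F(b_j)}\Bigr),
\]
so $\sgn(L_{\bfa\bfb})=\sgn\bigl(\sum_iF(a_i)-\sum_jF(b_j)\bigr)$; writing $W_{\bfa\bfb}=L_{\bfa\bfb}+Q_{\bfa\bfb}$, the remainder is the degenerate $U$–statistic $Q_{\bfa\bfb}=\sum_{i,j}g(a_i,b_j)$ with bounded, doubly centred kernel $g(x,y):=\sgn(x-y)-2F(x)+2F(y)$, i.e.\ $\EE[g(x,b_1)]=\EE[g(a_1,y)]=0$. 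Part~1 will follow once I show, conditionally on $\cE_0$, that (i) $n^{-3/2}L_{\bfa\bfb}$ converges to a non-degenerate centred Gaussian and (ii) $Q_{\bfa\bfb}=\lito_{\IP}(n^{3/2})$, since then
\[
\Pr\bigl[\sgn W_{\bfa\bfb}\neq\sgn L_{\bfa\bfb}\mid\cE_0\bigr]\ \le\ \Pr\bigl[|Q_{\bfa\bfb}|>\eps n^{3/2}\mid\cE_0\bigr]+\Pr\bigl[0<|L_{\bfa\bfb}|\le\eps n^{3/2}\mid\cE_0\bigr],
\]
and the right side is small on first taking $\eps$ small and then $n\to\infty$.

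Step (i) I would get from a local CLT for the two–dimensional walk with i.i.d.\ increments $(F(a_i),a_i)$, whose covariance matrix has diagonal entries $\tfrac{1}{12}$ and $1$ and off-diagonal entry $c:=\EE[a_1F(a_1)]$: conditioning on $\sum_ia_i=0$ turns $n^{-1/2}\bigl(\sum_iF(a_i)-n/2\bigr)$ into $\No(0,\sigma_F^2)$ with $\sigma_F^2=\tfrac{1}{12}-c^2$, the local CLT (where the continuity of $f$ and its closed–interval support enter) legitimising the conditioning on a null event. The key point is $\sigma_F^2>0$: equality $\sigma_F^2=0$ forces $F(a_1)$ to be a.s.\ affine in $a_1$, i.e.\ $f$ constant on $\supp(f)$, i.e.\ — given mean zero and variance one — $f$ uniform on $[-\sqrt{3},\sqrt{3}]$, which the hypotheses exclude. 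Hence $n^{-3/2}L_{\bfa\bfb}\Rightarrow\No(0,8\sigma_F^2)$, a law with no atom at $0$, which controls the $L$–term above. (This also explains why the conditioning matters and why the uniform model is different: there $F$ is affine, $\sum_iF(a_i)$ is constant on $\cE_0$, $L_{\bfa\bfb}$ collapses, and the order-$n$ ``chaotic'' remainder $Q_{\bfa\bfb}$ governs the outcome.)

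The hard part is step (ii). Unconditionally $Q_{\bfa\bfb}$ is a degenerate two–sample $U$–statistic with bounded kernel, so an index count gives $\EE[Q_{\bfa\bfb}^2]=n^2\,\Var g(a_1,b_1)=\bigo(n^2)$; the difficulty is that $\cE_0$ is a null event, so this must be transported to the conditional law. I would do this via a quantitative local CLT / Edgeworth comparison: the $\cE_0$–conditional density of $(a_1,\dots,a_n)$ is $\prod_if(a_i)$ reweighted by the reciprocal of the density of $\sum_ia_i$ at $0$, which is $\Theta(n^{-1/2})$ and, under the (twelve) finite moments and regularity hypotheses, smooth enough that single–coordinate marginals are perturbed only by $\bigo(1/n)$ and distinct coordinates acquire only the usual $\bigo(1/n)$ bridge correlations. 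Expanding $\EE[Q_{\bfa\bfb}^2\mid\cE_0]$ over quadruples of indices, the diagonal terms $i=i',j=j'$ still contribute $\bigo(n^2)$, while in every off–diagonal term two layers of cancellation survive the conditioning — the double centring of $g$, and the independence–to–leading–order of distinct faces — keeping each contribution $\bigo(n^2)$; thus $\EE[Q_{\bfa\bfb}^2\mid\cE_0]=\bigo(n^2)=\lito(n^3)$, and Chebyshev finishes (ii). Carrying out this bookkeeping cleanly — quantifying how the face–sum conditioning degrades the exact orthogonality of the Hoeffding decomposition and checking that the resulting errors remain $\lito(n^3)$ — is where I expect essentially all of the effort to lie; a Stein's–method treatment of the conditioned statistic (size–biasing or exchangeable pairs) is a plausible alternative route to the same bound.
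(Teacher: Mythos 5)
Your overall architecture is the same as the paper's: write $\overline{W}^{(ab)}=W^{(ab)}-n^2/2$ as $n\cdot V^{(ab)}$ plus a remainder (your $Q_{\bfa\bfb}$ is exactly $2(\overline{W}^{(ab)}-nV^{(ab)})$ with $V^{(ab)}=\sum_i F(a_i)-F(b_i)$), show the remainder has conditional variance $\lito(n^3)$, prove anti-concentration of $V^{(ab)}$ at scale $\sqrt{n}$, and conclude by Chebyshev; the role of $A=\EE[a_1F(a_1)]$ and the Cauchy--Schwarz characterization of the uniform case as the unique degenerate one is also identical. Your step (ii) is correctly identified as the technical heart, and your sketch (conditioning perturbs marginals and introduces $\bigo(1/n)$ correlations, and the double centring of $g$ must survive this) is consistent with the paper's Edgeworth-expansion computation; note only that the paper establishes $\lito(n^3)$, not $\bigo(n^2)$ --- the $\Theta(1/n)$ corrections to quantities like $\Pr[a_1>b_1\wedge a_2>b_2\mid\cE_0]$ are genuinely present and of size $A^2/n$, and what saves the day is an exact factor-of-two cancellation among the three terms of the variance expansion, so your claim that ``each contribution is $\bigo(n^2)$'' overstates what the first-order expansion gives (harmlessly, since $\lito(n^3)$ suffices).

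The genuine gap is in your step (i). You propose to obtain the conditional CLT (hence anti-concentration) for $\sum_iF(a_i)$ given $\sum_ia_i=0$ from a two-dimensional local CLT for the walk with increments $(F(a_i),a_i)$. Such local limit theorems (e.g.\ Bhattacharya--Rao, Theorem 19.1) require that some finite convolution power of the joint law of $(a_1,F(a_1))$ have a bounded density, and this can fail under the hypotheses of the theorem: since $F(a_1)$ is a deterministic function of $a_1$, the $k=1$ law is singular, and for a density that is (after normalization) uniform on $[-2,-1]\cup[1,2]$ and Gaussian on $(-1,1)$ --- which is continuous on a closed interval, has all moments, and is not uniform --- no finite convolution has a bounded density, because $F$ is affine on the locally uniform pieces and mass concentrates on lower-dimensional affine sets. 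So your route proves part~1 only for a subclass of admissible $f$. The paper circumvents this by a direct argument: pick $x^*,y^*$ with $F(x^*)+F(y^*)\neq 2F\bigl(\tfrac{x^*+y^*}{2}\bigr)$ (possible exactly because $f$ is not uniform), condition on the pairwise sums $d_i=a_{2i-1}+a_{2i}$, show that a constant fraction of the $d_i$ land near $x^*+y^*$ and that for each such pair the conditional variance of $F(a_{2i-1})+F(a_{2i})$ is $\Omega(1)$, and then apply Berry--Esseen to the resulting sum of independent bounded variables. You would need to replace your local-CLT step by an argument of this kind (or add a hypothesis guaranteeing a bounded joint density for some finite convolution).
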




To understand the differing behavior of  uniform versus non-uniform
dice implied by Theorem~\ref{con:weak} and the Polymath result,
we first recall that, as shown by Polymath \cite{PolVII},
for unconditioned dice with faces uniform in $(0, 1)$,
the face-sums determine if {\it$\bfa$} beats {\it$\bfb$} with high probability.
Taking an arbitrary single-face distribution $F$,
without conditioning on face-sums the distribution of the
random variable $W = \sum_{i,j=1}^n \1[a_i > b_j]$ does not depend
on $F$: this is because
$a_i > b_j$ if and only if $F(a_i)>F(b_j)$, and since
$(F(a_1),\ldots,F(a_n))$ is a die with faces uniform in $(0,1)$;
see also our Theorem \ref{thm41}.
Therefore, considering distribution $F$ conditioned on
$\cE_0$,
for the purposes of the ``beats'' relation, one can just as well think
of a die $(F(a_1),\ldots,F(a_n))$ conditioned on $\sum_{i=1}^n a_i=0$.
As long as $F$ is not affine, one might expect that,
even under~$\cE_0$, the random variables
$F (a_i )$ are distributed (almost) uniformly in $(0,1)$ with only
weak, global dependencies, suggesting that the expression
\be{
  \sgn\left( \sum_{i=1}^n F\left(a_i\right)-F\left(b_i\right)\right)
}
still determines the winner with high probability. Note that this
heuristic fails for the uniform distribution
since in that case the CDF-sum is a determinstic function of
the face-sum.

Applying the same reasoning in reverse, our result can be interpreted as
showing that
\begin{align*}
  \lim_{n\to\infty}
  \Pr\left[\bfa,\bfb,\bfc\text{ are intransitive}
  \mid \sum_{i=1}^n G(a_i) = \sum_{i=1}^n G(b_i) = \sum_{i=1}^n G(c_i) = 0\right]
  =0
\end{align*}
for \emph{uniform} dice $\bfa$, $\bfb$, $\bfc$ for a large class of continuous,
increasing, non-affine functions $G:\mathbb{R}\to\mathbb{R}$. This suggests that the intransitivity phenomenon for uniform dice is strongly linked to conditioning on the slices
$\sum_{i=1}^n a_i = c$.

Note that the assumptions of Theorem~\ref{con:weak} imply that the PDF
$f$ is bounded. We believe that they can be weakened in that respect:
For example, it should be enough that the convolution $f^{(*k)}$
is bounded for some finite $k$ (with the support interval $\supp(f)$
not necessarily closed) and that the assumption of continuity of $f$ is replaced with
piecewise continuity. We do not treat those relaxed assumptions
for the sake of readability. In any case, based, among others,
on experiments involving Cauchy distribution,
we suspect that the first two itemized assumptions in
Theorem~\ref{con:weak} are not necessary for its statement to hold.

The main ingredient of the proof is a variance calculation that establishes
that for two dice
\begin{align*}
  \Var\left[ \sum_{i,j=1}^n \1(a_i>b_j) - n\sum_{i=1}^n \big(F(a_i)-F(b_i)\big)
  \mid \cE_0\right] = o(n^3) \; ,
\end{align*}
while the variance of each term of the difference is of order~$n^3$.
These two facts and an anti-concentration argument then 
 imply Theorem~\ref{con:weak}. The variance calculation
uses a CLT calculation with a rather attentive tracking of errors.
This is interesting in comparison with \cite{Pol17}, since it suggests
that careful application of central limit theorems is important in establishing
both transitivity and intransitivity results.
We also need to establish CLT-like anti-concentration for the random variable
$\sum_{i=1}^n F(a_i)$ conditioned on $\cE_0$. For that, we employ a direct
argument that uses conditioning on the values of pairs
$a_1+a_2, \ldots, a_{n-1}+a_{n}$. The proof is given in
Section~\ref{sec:nonuniform}.

\subsection{Intransitive dice: Stationary Gaussian dice}

In the  setting of Theorem \ref{con:weak} with standard Gaussian  $\mathcal{N}(0, 1)$ faces,
it can be computed that the conditioned die  {\it $\bfa=( a_1, \ldots, a_n )$}
is distributed as a joint centered Gaussian with $\Var[a_i] = 1-1/n$ and
$\Cov[a_i, a_j] = -1/n$ for $i\ne j$. Therefore, it can be seen
as a \emph{locally stationary} Gaussian family, that is, a family where the correlation of $a_i$ and $a_j$ depends only on $n$ and $i-j$ (more precisely, for our conditioning, the correlation depends solely on
whether $i$ is equal to $j$, i.e., $\delta_{ij}$).

In this particular Gaussian case, one can provide another proof of the conclusion of Theorem~\ref{con:weak}  using the so-called Malliavin-Stein machinery (see \cite{bluebook} for a comprehensive treatment). Indeed, one can expand the indicator function $\1[\bullet > 0]$ based on Hermite polynomials (see \eqref{ind_chaos}), then rewrite the random variable $W = \sum_{i,j=1}^n \1[a_i -b_j > 0]$ into an infinite sum of multiple Wiener-It\^o integrals. It is then enough to apply (for example) Theorem 6.3.1 in \cite{bluebook} to get the following CLT:
\[
\frac{1}{n^{3/2}} \Big[ W - \mathbb{E}\big(W \big) \Big] \xrightarrow[n\to+\infty]{\rm law} \mathcal{N}(0, \alpha) \,,
\] 
where the limiting variance $\alpha = \frac{1}{6} - \frac{1}{2\pi}$ can be deduced from standard arguments and Newton's 1676 identity (see Remark \ref{obs11}). On the other hand, one can again use the Hermite expansion to compute that variance of $W - n \sum_{i=1}^n [ F(a_i) - F(b_i) ] $ is $O(n^2)$. Then the transitivity follows from this variance estimate and the above CLT. We leave the details for interested readers.      Meanwhile, it is natural to investigate the (globally) stationary Gaussian case.   It turns out that one can use the Breuer-Major theorem  \cite{BM83} to prove a version of Theorem~\ref{con:weak} for (globally) \emph{stationary Gaussian dice}.

Here is our setting: let $\{G_i, i\in\mathbb{N}\}$ be a centered stationary Gaussian sequence such that $\E[ G_i G_j ] =\rho(i-j)$ for some (correlation) function $\rho: \mathbb{Z}\to\mathbb{R}$. We assume that $\rho(0)=1/2$. The main example of such a correlation function will be that of fractional Brownian increments. That is, we will consider a rich source of examples where
$
\rho(k) = s_H(k) :=\frac{1}{2} \E [ B_1^H  (B^H_{\vert k\vert+1}  - B_{\vert k\vert}^H)  ]$  for $k\in\mathbb{Z}
$
with $B^H$ being the {\it fractional Brownian motion} with Hurst parameter $H\in(0,1)$.  The multiplicative constant $1/2$ is chosen only for normalization purposes and 
\begin{align}\label{s_H}
s_H(k) = \frac{1}{4} \big( \vert k+1\vert^{2H}  + \vert k-1\vert^{2H}- 2 \vert k \vert^{2H} \big)   \, ;
\end{align} 
one can easily check that for $H\neq 1/2$,   as $\vert k\vert\to+\infty$,
\begin{align} \label{cor-est}
s_H(k) \sim  c_H \vert k\vert^{2H-2} \, ,  \end{align}
where $c_H := H(2H-1)/2$ is uniformly bounded by $1/2$. For a brief introduction to the fractional Brownian motion, one can refer to the recent book \cite{Ivanfbm}. 

In the following,  we first present   a very peculiar  phenomenon arising from the fractional Brownian example as a prelude,        and  we postpone   results concerning more general correlation functions $\rho$
to Section~\ref{sec:stationary}.

\begin{theorem} \label{SY-all}
  Let $\textbf{a}, \textbf{b}, \textbf{c}$ be i.i.d.~copies of $\{ G_1, \ldots, G_n\}$ with correlation function $s_H$  for {\bf any} given  $H\in(0, 1)$.    Then,  with high probability,
  \begin{align}\label{corderp}
    \text{$\textbf{a}$ beats $\textbf{b}$}
    \quad \text{if and only if } \quad \sum_{i=1}^n F(a_i) > \sum_{i=1}^n  F(b_i)  \, ,
\end{align}
where $F(x)= \Phi(\sqrt{2} x)$ is the distribution function of $G_1\sim N(0, 1/2)$.
As a consequence,  the probability that three dice $\bfa, \bfb, \bfc$ are transitive tends to one, as $n\to+\infty$ .
 
\end{theorem}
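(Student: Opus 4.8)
The plan is to follow the architecture of the proof of Theorem~\ref{con:weak}. Write $W:=\sum_{i,j=1}^n\1[a_i>b_j]$ and $Y:=\sum_iF(a_i)-\sum_iF(b_i)$. Since $a_i-b_j\sim\No(0,1)$ one has $\E W=n^2/2$, and because $\bfb\perp\bfa$ with $F$ the CDF of $a_i$ one checks directly that $\E[W\mid\bfa]=n\sum_iF(a_i)$ and $\E[W\mid\bfb]=n^2-n\sum_iF(b_i)$. Setting $R:=W-\E[W\mid\bfa]-\E[W\mid\bfb]+\E W$ then gives the identity $W-n^2/2=nY+R$ with $\E[R\mid\bfa]=\E[R\mid\bfb]=0$. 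Since $\bfa\beats\bfb$ iff $W>n^2/2$ (ties having probability $0$), the equivalence \eqref{corderp} will follow as soon as $|nY|>|R|$ with high probability, and for this it is enough to establish (i) $\Var(R)=\lito(n^2\Var(Y))$ and (ii) $Y/\sqrt{\Var(Y)}$ converges in law to a standard Gaussian: indeed, for fixed $\lambda>0$ one then has $\Pr[|nY|\le\lambda\sqrt{\Var(R)}\,]\to0$ while $\Pr[|R|>\lambda\sqrt{\Var(R)}\,]\le\lambda^{-2}$, and letting $\lambda\to\infty$ gives $\Pr[|nY|\le|R|]\to0$.

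For step (i) I would expand $\1[z>0]-\tfrac12=\sum_{m\ \mathrm{odd}}\tfrac{c_m}{m!}H_m(z)$ in (probabilist's) Hermite polynomials, apply it to $z=a_i-b_j$, and use the product formula $H_m(a_i-b_j)=\sum_{k+l=m}\binom{m}{k}2^{-m/2}(-1)^lH_k(\sqrt2\,a_i)H_l(\sqrt2\,b_j)$. The terms with $k=0$ or $l=0$ reassemble into $\E[W\mid\bfb]$ and $\E[W\mid\bfa]$ (the relevant identity $\sum_m\tfrac{c_m2^{-m/2}}{m!}H_m(\sqrt2\,a_i)=F(a_i)-\tfrac12$ being forced by $\E[W\mid\bfa]=n\sum_iF(a_i)$ and $F(x)=\Phi(\sqrt2 x)$), leaving $R=\sum_{k,l\ge1,\ k+l\ \mathrm{odd}}\tfrac{c_{k+l}2^{-(k+l)/2}(-1)^l}{k!\,l!}\,S_k^{(a)}S_l^{(b)}$ with $S_k^{(a)}:=\sum_iH_k(\sqrt2\,a_i)$. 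By orthogonality of distinct Wiener bi-chaoses and independence of $\bfa,\bfb$ this yields $\Var(R)=\sum_{k,l\ge1,\ k+l\ \mathrm{odd}}\tfrac{c_{k+l}^2\,2^{-(k+l)}}{(k!l!)^2}\Var(S_k^{(a)})\Var(S_l^{(b)})$ with $\Var(S_k^{(a)})=k!\,2^kv_k$, where $v_k:=\sum_{i,j=1}^ns_H(i-j)^k$. Using $|s_H(p)|\le s_H(0)=\tfrac12$ together with the constant sign of $s_H(p)$ for $p\neq0$ gives $v_k\le(1/2)^{k-1}v_1$, and $v_1=\Var(\sum_iG_i)=\tfrac12n^{2H}$ since $\sum_iG_i\eqlaw\tfrac1{\sqrt2}B^H_n$; hence $\Var(S_k^{(a)})\le k!\,n^{2H}$, and with the identity $\sum_{k+l=m}\tfrac1{k!l!}=\tfrac{2^m}{m!}$ one gets $\Var(R)\le n^{4H}\sum_m\tfrac{c_m^2}{m!}=\tfrac14n^{4H}$. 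On the other hand $\Var(Y)=2\Var(\sum_iF(a_i))$, whose first-chaos component alone has variance $2\beta_1^2v_1=\beta_1^2n^{2H}$ with $\beta_1=c_1/\sqrt2>0$, so $n^2\Var(Y)\gtrsim n^{2+2H}$ when $H\ge\tfrac12$; and when $H<\tfrac12$ the chaos-$\ge3$ part of $\sum_iF(a_i)$ already has variance $\asymp n$ (using \eqref{cor-est}, $v_m\sim n\sum_ks_H(k)^m$ with $\sum_ks_H(k)^m>0$ for $m\ge2$), so $n^2\Var(Y)\asymp n^3$. Either way $\tfrac14n^{4H}=\lito(n^2\Var(Y))$ because $H<1$, which is~(i).

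For step (ii) I would argue by cases. When $H\le\tfrac12$: write $Y=\sum_ig(\sqrt2\,a_i)-\sum_ig(\sqrt2\,b_i)$ with $g=\Phi-\tfrac12$, which has Hermite rank $1$, and $s_H\in\ell^1$ by \eqref{cor-est}, so the Breuer--Major theorem \cite{BM83} gives $Y/\sqrt n\Rightarrow\No(0,\sigma^2)$ with $\sigma^2=2\sum_m\tfrac{c_m^2}{m!}\sum_ks_H(k)^m$; this is strictly positive because the $m=1$ term $2c_1^2\sum_ks_H(k)$ vanishes for $H<\tfrac12$ (from \eqref{s_H}, $\sum_{|k|\le N}s_H(k)=\tfrac12[(N+1)^{2H}-N^{2H}]\to0$), while the $m=3$ term is proportional to $\sum_ks_H(k)^3>0$, that sum being $(2\pi)^{-2}$ times a convolution of the non-negative spectral density at the origin (for $H=\tfrac12$ one simply recovers the i.i.d.\ central limit theorem). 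When $H>\tfrac12$: $Y$ equals its first-chaos component $\beta_1\sqrt2\sum_i(a_i-b_i)$, an exact centred Gaussian of variance $2\beta_1^2n^{2H}$, plus a remainder living in chaoses $\ge3$ of variance $\lito(n^{2H})$ (from $v_m\le(1/2)^{m-2}v_2$ and $v_2=\lito(n^{2H})$ by \eqref{cor-est}), so $Y/n^H\Rightarrow\No(0,2\beta_1^2)$. In all cases $Y/\sqrt{\Var(Y)}\Rightarrow\No(0,1)$, giving~(ii) and hence \eqref{corderp}. The intransitivity conclusion then follows by applying \eqref{corderp} to each of the three ordered pairs among $\bfa,\bfb,\bfc$ and taking a union bound: with high probability $\bfa\beats\bfb$ iff $X_a>X_b$ where $X_a:=\sum_iF(a_i)$, and likewise for the other pairs; the $X_a,X_b,X_c$ are i.i.d.\ and each has a continuous law (conditioning on all coordinates but one turns $X_a$ into $F(a_1)$ plus a constant, with $F$ strictly increasing and continuous and $a_1$ conditionally a non-degenerate Gaussian), hence they are a.s.\ distinct and induce a strict linear order, so $\bfa,\bfb,\bfc$ are transitive and $\Pr[\bfa,\bfb,\bfc\ \text{intransitive}]\to0$.

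I expect the main obstacle to be the Wiener-chaos bookkeeping behind $\Var(R)\le\tfrac14n^{4H}$: the uniform-in-$k$ bound $v_k\le(1/2)^{k-1}v_1$ with $v_1\asymp n^{2H}$ is what forces the series over $m$ to collapse to $\sum_mc_m^2/m!=\Var(\1[N>0])=\tfrac14$, and hence makes $R$ negligible simultaneously in the three correlation regimes $H<\tfrac12$, $H=\tfrac12$, $H>\tfrac12$; keeping careful track of which chaos term dominates $\Var(Y)$ in each regime is the other place where care is needed. Once these variance estimates are in hand, step (ii) is essentially a citation — of the Breuer--Major theorem for $H\le\tfrac12$, and of the exact Gaussianity of the first Wiener chaos for $H>\tfrac12$ — and one could alternatively run the whole variance computation through the Malliavin--Stein calculus, exactly as indicated in the text for the locally stationary case.
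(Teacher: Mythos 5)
Your overall architecture is the same as the paper's: your $R=W-n^2/2-nY$ is exactly the quantity $W^{(ab)}-n\sum_i[F(a_i)-F(b_i)]$ of Lemma~\ref{Lui2} (your chaos formula for $\Var(R)$ is, after the substitution $c_m=m!\,d_m$, precisely \eqref{parkville2}), your step (ii) is the paper's Breuer--Major argument for $H<1/2$ and its first-chaos-domination argument for $H>1/2$, and your anticoncentration endgame is the one used at the end of the paper's \emph{case 1}. However, there is one genuine error, and it sits at the step you yourself identify as the crux. The inequality $v_k\le(1/2)^{k-1}v_1$ is \emph{false} in the antipersistent regime $H<1/2$. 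There $s_H(p)<0$ for $p\ne 0$ and $\sum_{p}s_H(p)=0$, so $v_1=\tfrac12 n^{2H}=o(n)$, whereas already
\[
v_2=\sum_{i,j=1}^n s_H(i-j)^2\;\ge\; n\,s_H(0)^2=\frac{n}{4}\;\gg\;\frac12 v_1=\frac14 n^{2H}.
\]
Passing to absolute values does not save the argument: because of the sign cancellation, $\sum_{i,j}|s_H(i-j)|\asymp n$ is not comparable to $v_1\asymp n^{2H}$. Consequently your claimed bound $\Var(R)\le\tfrac14 n^{4H}$ is also false for $H<1/2$: for instance the $(k,l)=(2,3)$ term of your sum is of order $v_2v_3\asymp n^2$ (using $v_m\sim n\sum_p s_H(p)^m$ with $\sum_p s_H(p)^m>0$ for $m\ge2$), while $n^{4H}=o(n^2)$ there. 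So as written, step (i) fails on exactly one of the two nontrivial correlation regimes the theorem is about.

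The good news is that the gap is repairable and the conclusion of (i) survives: for $H<1/2$ one instead bounds $v_k\le(1/2)^{k-2}v_2$ for $k\ge2$ with $v_2=O(n)$ (since $s_H\in\ell^2$), and treats the terms involving $v_1$ separately; this gives $\Var(R)=O(n^2)=o(n^3)=o\bigl(n^2\Var(Y)\bigr)$, which is what the paper obtains via Lemma~\ref{Esch} under the hypothesis $\rho\in\ell^3(\mathbb{Z})$. Your argument for $H\ge1/2$ is correct as stated (there $s_H\ge0$, so $v_k\le(1/2)^{k-1}v_1$ does hold, and $n^{4H}=o(n^{2H+2})$ for $H<1$), as are the conditional-expectation decomposition, the identification of the $k=0$ and $l=0$ blocks with $n\sum_iF(a_i)$ and $n^2-n\sum_iF(b_i)$, the positivity of the Breuer--Major limit variance for $H<1/2$, and the passage from the pairwise statement \eqref{corderp} to asymptotic transitivity. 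Note also that the paper's Lemma~\ref{Lui2} works under the clean hypothesis $\rho\in\ell^3(\mathbb{Z})$ (covering all $H<5/6$) and then handles $H\in[5/6,1)$ by a direct asymptotic evaluation of \eqref{parkville2}; your single uniform bound $n^{4H}$, once restricted to $H\ge1/2$ where it is valid, is a slightly cruder but adequate substitute for that case analysis.
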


 \begin{remark} (i) The case $H=1/2$ corresponds to the aforementioned
   unconditional Gaussian dice,  and by the standard integral transform,
   it extends to unconditional dice with i.i.d.~faces sampled from a large
   class of distributions; see Theorem \ref{thm41}.
   As already mentioned, \cite{PolVII} gives an elementary proof for unconditioned
   uniform dice.
   
   \smallskip

\noindent (ii) For $k\neq 0$,  $s_H(k) > 0$ if $H\in(1/2, 1)$ while  $s_H(k) < 0$
   whenever $H\in(0, 1/2)$. Theorem~\ref{SY-all} suggests that  \emph{negative correlation or positive correlation among different faces does not influence
     formula \eqref{corderp}}, and therefore also the transitivity of {\it $\bfa, \bfb,\bfc$.   }

 \end{remark}

The proof of Theorem~\ref{SY-all} makes use of the 
very close relation between the Hermite expansions of functions
$\1[\bullet>0]$ and $\Phi$:
\begin{align}
     \1\big[ \bullet > 0\big] = \frac{1}{2} + \sum_{k=0}^\infty d_{2k+1} H_{2k+1}, &\quad\text{with $d_{2k+1} =  \frac{(-1)^k}{2^k k! (2k+1) \sqrt{2\pi}}$,} \label{ind_chaos} \\
 \Phi = \frac{1}{2} + \sum_{k = 0}^\infty \ell_{2k+1} H_{2k+1} \,, &\quad\text{with $\ell_{2k+1} = d_{2k+1}  2^{-k-\frac{1}{2}} $, } \label{Phi-exp}
\end{align}
where the above series converge in $L^2(\mathbb{R}, \exp(-x^2/2)dx)$; see Section \ref{sec:stationary} for more details. 

\subsection{Condorcet paradox: Social chaos for close majority elections}
\label{sec:condorcet-intro}

The Condorcet paradox is a well-known intransitivity phenomenon
in social choice theory.
Consider $n$ voters trying
to decide between $k$ alternatives. Each voter has a ranking (linear ordering)
of the alternatives and we would like to aggregate the $n$ rankings into
a global one.
A natural approach is as follows:
given a pair of alternatives $a$ and $b$, we say that $a$ beats $b$
if a majority of voters put $a$ ahead of $b$ in their rankings
(we always assume $n$ is odd to avoid dealing with ties).
Aggregating these majority elections for all $K := \binom{k}{2}$ pairs of
alternatives, we obtain a tournament graph on $k$ vertices, that is, a complete graph where each edge is directed.

If there exists a Condorcet winner
({\it i.e.}~the alternative that beats all others),
and, in particular, if this tournament is transitive
({\it i.e.}~it induces a linear ordering), we might conclude
that there is a clear global winner of the election.
However,
in Condorcet paradox the pairwise rankings need not produce
a Condorcet winner. For example, we might have three
voters with rankings $a \beats b \beats c$, $b \beats c \beats a$ and
$c \beats a \beats b$, respectively. Majority aggregation results
in $a$ beating $b$, $b$ beating $c$ and $c$ beating $a$.

Assume a probabilistic model with $n$ voters
and $k$ alternatives, where
each voter samples one of $k!$ rankings independently and uniformly.
This is called the \emph{impartial culture} assumption and is the most
common model studied in social choice
(see~\cite{Geh02} for one survey of results in related settings).
Despite the example above, one might hope that under impartial culture,
the paradox is unlikely to arise for a large number of voters.
However, it was one of the earliest results in social choice theory
\cite{Gui52, GK68}
that it is not so:
in particular, letting
$P_{\mathrm{Cond}}(k, n)$ to be the probability of Condorcet winner for $n$ voters
and $k$ alternatives, and
$P_{\mathrm{Cond}}(k) := \lim_{n \to \infty} P_{\mathrm{Cond}}(k, n)$,
we have
\begin{align}
  P_{\mathrm{Cond}}(3) =
  \frac{3}{2\pi}\arccos(-1/3) \le 91.2\% \; .
  \label{eq:43}
\end{align}
For $k \ge 4$ there is no simple expression, but the numerical values up
to $k=50$ were computed by Niemi and Weisberg \cite{NW68}; for example,
$P_{\mathrm{Cond}}(10) \approx 51.1\%$ and
$P_{\mathrm{Cond}}(27) \approx 25.5\%$,
and the asymptotic behavior is given by May \cite{May71} as
\begin{align}
  P_{\mathrm{Cond}}(k) =  \frac{\sqrt{8\pi \log k} }{k}
  \big(1 + O (1/\log k )\big) \; ,
  \label{eq:21}
\end{align}
in particular 
$\lim_{k \to \infty} P_{\mathrm{Cond}}(k) = 0$.
If one is interested in the probability of a completely transitive outcome,
the best asymptotic estimate known \cite{Mos10} is $\exp(-\Theta(k^{5/3}))$.

Given the dice models studied in
\cite{CGGLM}
and \cite{Pol17},
it seems reasonable to study the probability of Condorcet paradox under impartial culture,
conditioned on all pairwise elections being close to tied. The conditioning on elections being almost tied seems natural also given the abundance of real life elections that are close to tied. 

To define the model more precisely, for each pair of alternatives $\{a,b\}$, define the random variable $S^{(ab)}$
to be the number of voters that prefer $a$ to $b$, minus the number of voters preferring
$b$ to $a$. In other words, the sign of $S^{(ab)}$ determines the alternative
that wins the pairwise election. Let $Y^{(ab)} := \sgn(S^{(ab)})$ and $Y$ be
the random tuple encoding the $K$ pairwise winners via the $Y^{(ab)}$, having $K$ entries with values in $\{-1, 1\}$.
Furthermore, for $d \ge 1$, let $\cE_d$ be the event
that $\left|S^{(ab)}\right| \le d$ for every pair $\{a,b\}$.
We think of the event $\cE_d$ as ``the elections are $d$-close'',
with $d=1$ corresponding to almost perfectly tied elections.

\medskip

Our main result for voting uses a multidimensional local limit theorem to show that the probability
of Condorcet winner for almost tied elections goes to zero much faster
than  in \eqref{eq:21}.
Actually, we prove the following stronger result.\begin{theorem}
  \label{thm:elections-random}
  Let $n$ be odd, $d \ge 1$ and
  $y \in \{-1, 1\}^K$. Then,
  \begin{align}
    \Big|
    \Pr\left[ Y = y \mid \cE_d \right] - \frac{1}{2^K}
    \Big| \le \alpha_k \frac{d^2}{n} + o_k(1) \; ,
    \label{eq:20}
  \end{align}
  where $\alpha_k > 0$ depends only on $k$ and $o_k(1)$ denotes a function
  that depends only on $k$ (but not on $d$ or $y$) and goes to zero, as $n$
  goes to infinity.

  In particular,
  \begin{align}
    \Big| \Pr\left[ Y \text{ is transitive } \mid \cE_d \right] - \frac{k!}{2^K} \Big|
    \le \beta_k \frac{d^2}{n} + o_k(1)
    \label{eq:24}
  \end{align}
  and
  \begin{align}
    \Big| \Pr\left[ Y \text{ has Condorcet winner} \mid \cE_d \right]
    - \frac{k}{2^{k-1}} \Big| \le \gamma_k \frac{d^2}{n} + o_k(1) \; 
    \label{eq:25}
  \end{align}
  for some $\beta_k, \gamma_k > 0$.
\end{theorem}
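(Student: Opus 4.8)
The plan is to write $S := \bigl(S^{(ab)}\bigr)_{\{a,b\}}$ as a sum $S = \sum_{v=1}^{n} X^{(v)}$ of i.i.d.\ random vectors in $\{-1,1\}^{K}$, where $X^{(v)}$ records the pairwise preferences of voter $v$ and is uniform over the $k!$ vectors arising from linear orders of the alternatives, and then to feed $S$ into a multidimensional local limit theorem. Three structural facts about the law of $X^{(v)}$ are needed. First, $\E[X^{(v)}] = 0$, by the symmetry that reverses every voter's ranking. Second, the covariance $\Sigma := \Cov(X^{(v)})$ is positive definite; this follows because the vectors $X^{(v)}$ linearly span $\R^{K}$ — indeed, if $\sigma$ is a ranking in which alternatives $a$ and $b$ are adjacent and $\sigma'$ is obtained from $\sigma$ by swapping $a$ and $b$, then $X^{\sigma} - X^{\sigma'} = \pm 2\,e_{\{a,b\}}$, so all the $e_{\{a,b\}}$ lie in the span of differences. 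Third, the same computation shows the group generated by $\{X^{(v)} - X^{(v')}\}$ is exactly $(2\mathbb Z)^{K}$; since $n$ is odd and every coordinate of $X^{(v)}$ is odd, this identifies the support lattice of $S$ as the coset $\mathbf 1 + (2\mathbb Z)^{K}$ and supplies the aperiodicity required below. (Oddness of $n$ also guarantees that no coordinate of $S$ vanishes, so $\sgn$ is genuinely $\{-1,1\}$-valued.)

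Applying the classical local central limit theorem for lattice-valued i.i.d.\ sums with bounded increments, one obtains, uniformly over $s$ in the coset $\mathbf 1 + (2\mathbb Z)^{K}$,
\[
\Pr[S = s] \;=\; \frac{2^{K}}{(2\pi n)^{K/2}\sqrt{\det \Sigma}}\;\exp\!\Bigl(-\tfrac{1}{2n}\,s^{\top}\Sigma^{-1}s\Bigr) \;+\; o\bigl(n^{-K/2}\bigr),
\]
where the prefactor $2^{K}$ is the covolume of $(2\mathbb Z)^{K}$. For $s$ in the cube $\{\|s\|_{\infty}\le d\}$ entering the event $\cE_{d}$, the Gaussian factor is $1 + O_{k}(d^{2}/n)$ and the additive error is $o_{k}(1)$ relative to the main term, so there is $c_{n} > 0$ (independent of $s$) with
\[
\Pr[S = s] \;=\; c_{n}\bigl(1 + \eta_{n}(s)\bigr), \qquad \sup_{\|s\|_{\infty}\le d,\; s\in\mathbf 1+(2\mathbb Z)^{K}}\lvert\eta_{n}(s)\rvert \;\le\; \alpha'_{k}\,\frac{d^{2}}{n} + o_{k}(1).
\]

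The final point is a counting identity for the cube. Every coordinate of a lattice point is an odd integer, hence never $0$, and the odd integers in $[-d,d]$ consist of $N_d := \lceil d/2\rceil$ positive ones and $N_d$ negative ones. Thus, for every $y \in \{-1,1\}^{K}$, the number of lattice points $s$ with $\|s\|_{\infty}\le d$ and $\sgn(s) = y$ equals $N_{d}^{K}$, the same for all $y$, and the total number is $(2N_{d})^{K}$. Therefore
\[
\Pr[Y = y \mid \cE_{d}] \;=\; \frac{\sum_{\sgn(s)=y}\Pr[S=s]}{\sum_{s}\Pr[S=s]} \;=\; \frac{N_{d}^{K} + \sum_{\sgn(s)=y}\eta_{n}(s)}{(2N_{d})^{K} + \sum_{s}\eta_{n}(s)},
\]
the sums running over the $(2N_{d})^{K}$ lattice points of the cube; bounding $\lvert\eta_{n}\rvert$ uniformly yields $\bigl\lvert \Pr[Y=y\mid\cE_{d}] - 2^{-K}\bigr\rvert \le \alpha_{k}\, d^{2}/n + o_{k}(1)$, which is \eqref{eq:20} (the bound being vacuous when its right side exceeds $1$). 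Summing \eqref{eq:20} over sign patterns gives \eqref{eq:24} and \eqref{eq:25}: there are $k!$ transitive tournaments, and since a Condorcet winner is unique when it exists there are exactly $k\,2^{\binom{k-1}{2}}$ tournaments possessing one, with limiting probabilities $k!/2^{K}$ and $k\,2^{\binom{k-1}{2}}/2^{\binom{k}{2}} = k/2^{k-1}$ and error constants $\beta_{k} = k!\,\alpha_{k}$, $\gamma_{k} = k\,2^{\binom{k-1}{2}}\alpha_{k}$.

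The main obstacle is the local limit theorem together with the lattice bookkeeping: one must confirm that $\Sigma$ is nonsingular and that the walk is aperiodic on $\mathbf 1 + (2\mathbb Z)^{K}$ (no hidden sublattice), and then extract a local estimate whose error is uniform over the relevant cube. Everything after that — the reduction to $2^{-K}$ and the passage to \eqref{eq:24}--\eqref{eq:25} — is routine; in fact the delicate-looking ``$d$ small, $n$ large'' regime is precisely where the local limit theorem is easiest, since there the Gaussian density is essentially flat.
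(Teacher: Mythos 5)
Your proposal is correct and follows essentially the same route as the paper: a multidimensional local limit theorem for the lattice-valued walk $S=\sum_v X^{(v)}$, a uniform estimate of $\Pr[S=s]$ over the cube $\|s\|_\infty\le d$ with error $O_k(d^2/n)+o_k(1)$, and the observation that each sign pattern $y$ accounts for the same number of odd lattice points, so the conditional law of $Y$ is nearly uniform. The only cosmetic difference is that you invoke the lattice LCLT directly on the coset $\mathbf 1+(2\mathbb Z)^K$ (verifying aperiodicity via adjacent transpositions), whereas the paper rescales to the walk $T_i=(S_{2i+1}-\mathbf 1)/2$ so as to quote Spitzer's strongly aperiodic form on $\mathbb Z^K$; these are equivalent.
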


One interpretation of this result is that the probability of Condorcet paradox,
which is already substantial without conditioning, increases to reach
the fully chaotic behavior
for elections that are almost three-way ties.
The event $\cE_d$ for $d=o(\sqrt{n})$ has subconstant probability, but on the
other hand such ``close'' elections seem to be a natural case to study
(and one might argue that in practice they arise more often than the model
suggests). Furthermore, some other interesting phenomena in social choice can be shown
to arise only with polynomially small probability, see, {\it e.g.}~the quantitative
Gibbard-Satterthwaite theorem \cite{FKKN11, IsKiMo:12, MosselRacz:15}. 

Comparing Theorem~\ref{thm:elections-random}
to intransitivity of random uniform dice conditioned on their face-sums,
first note that for almost tied elections and $k=3$, the asymptotic probability of
Condorcet winner computed from \eqref{eq:25} is $3/4$, which is equal to the probability of transitivity
for dice. 
On the other hand, there is a difference in the transition between the
transitive and chaotic regimes. Assuming dice with faces uniform in $(-1, 1)$,
the model is chaotic when conditioned on face-sums equal to zero, but,
as shown by Polymath \cite{PolVII},
it becomes transitive as soon as we condition on face-sums of absolute value at
most $d$ for $d = \omega(\log n)$. However, the voting outcomes
behave chaotically for $d$-close elections for any $d = o(\sqrt{n})$
and transition into the ``intermediate'', rather than transitive,
regime given by~\eqref{eq:43}.
Furthermore,~\eqref{eq:20} means
that the tournament on $k$ alternatives determined by $Y$ is
asymptotically random.
\cite{CGGLM}
conjectured that $k$ random dice also form a random
tournament, however \cite{Pol17} report experimental evidence against this
conjecture.

We also note that the proof of Theorem~\ref{thm:elections-random} can be
modified such that its statement holds even when conditioning on
only $K-1$ out of $K$ pairwise elections being $d$-close.

The above-mentioned work by Kalai \cite{Kal10} calls the situation when
$Y$ is a random tournament \emph{social chaos}.
He considers impartial culture model (without conditioning)
and an arbitrary monotone odd function $f\colon\{-1, 1\}^n \to \{-1, 1\}$
for pairwise elections (the setting we considered so far corresponds to $f = \Maj_n$).
Under these assumptions, he proves that social chaos is equivalent
to the asymptotic probability of Condorcet winner for three alternatives being
equal to $3/4$.
\cite{Kal10} contains another equivalent condition for social chaos,
stated in terms of noise sensitivity of function $f$ for only
two alternatives. It is interesting to compare it with the reduction
from three to two dice in Lemma~2.1 of \cite{Pol17}.

\subsection{Condorcet paradox: Generalizing close elections -- A case study}

It would be interesting to extend Theorem~\ref{thm:elections-random}
to other natural pairwise comparison functions  such as 
weighted majorities and recursive majorities,
similar to the electoral college in the USA.
However, in order to formulate such a result, it is first necessary to define $d$-close elections for an
arbitrary function. The results of this section deal with the question if such a definition exists. Somewhat surprisingly, we show that natural definitions of close elections do not lead to a chaotic outcome when ranking three alternatives. We do so by presenting a simple   example, for which two of the most natural definitions do not result in chaotic outcome. 

For this we consider the following function. 
Let us assume that there are
three candidates $a$, $b$, $c$ and a number of voters
$n$ that is divisible by three, letting $m := n/3$.
We take
$f\colon\{-1, 1\}^n \to \{-1, 1\}$ to be
\begin{align*}
  f(x_1, \ldots, x_n) :=    \sgn\left( \sum_{i=1}^m \sgn\left( x_{3i-2}+x_{3i-1}+x_{3i} \right) \right) \; .
\end{align*}
In words, $f$ is a two-level majority:
majority of votes of $m$ triplets, where the vote of each triplet is decided by majority.

The function $f$ possesses many pleasant properties: it is odd,
transitive symmetric\footnote{
  A voting function $f:\{-1,1\}^n\to\{-1,1\}$
  is transitive symmetric if for every $i,j\in[n]$ there exists a
  permutation $\sigma:[n]\to[n]$ such that $\sigma(i)=j$ and
  $f\circ\sigma = f$, where
  $(f\circ\sigma)(x_1,\ldots,x_n)=f(x_{\sigma(1)},\ldots,x_{\sigma(n)})$.
  Informally, every two voters play the same role.
}
and is a polynomial threshold function of degree three.
We would like to devise a natural notion of $d$-close elections according
to $f$. In light of Theorem~\ref{thm:elections-random} it might be
argued that the ``right'' notion of closeness should result in the chaotic outcome,
same as for majority.  We show that for two natural definition of closeness, {\em this is not the case}. 

To start with, let $w_i := x_{3i-2} + x_{3i-1} + x_{3i}$. In the following
we will sometimes treat $f$ as a function of $\mathbf{w} := (w_1, \ldots, w_m)$,
i.e., $f\colon \{\pm 1, \pm3\}^m \to \{\pm 1\}$, with the distribution of $\mathbf{w}$
induced by the distribution of $\mathbf{x}$, {\it i.e.}, $w_i = \pm 3$ and
$w_i = \pm 1$ with probabilities $1/8$ and $3/8$, respectively.
A CLT argument as in
Theorem~\ref{thm:elections-random} implies chaotic behavior of $f$
if we define
``$d$-close'' as
``$\big\vert \sum_{i=1}^m \sgn \big(w_i^{(kk')} \big) \big\vert \le d$''
for every pair of candidates $(kk')$. However, this is not very satisfactory
for at least two reasons. First, it does not seem to extend to other functions
that do not have such an ``obvious'' summation built into them.
Second, it does not accord well with our intuition of
closeness. This second problem becomes more apparent considering analogous
condition for another two-level majority, with $\sqrt{n}$ groups of
$\sqrt{n}$ voters each. In this case of ``electoral college'' an election
that was close in every ``state'' in favor of a single candidate
would not be considered close overall.

Another idea is to define ``$d$-close'' the same way as in
Theorem~\ref{thm:elections-random}, that is as
``\,$\big| \sum_{i=1}^n x_i^{(kk')} \big| \le d$\,''. Clearly, this is not a good
closeness measure for an arbitrary comparison method ({\it e.g.}, weighted majority
with large differences between weights),
but one could argue that it is relevant
at least for transitive symmetric functions. Using another CLT argument,
we find that for this definition of closeness, the behavior of $o(\sqrt{n})$-close
elections under $f$ is not chaotic: the
asymptotic Condorcet paradox probability is slightly less than $25\%$.
Note that for three candidates, the Condorcet paradox occurs if and only if 
$f (\mathbf{x}^{(ab)} ) =
f (\mathbf{x}^{(bc)} ) = f (\mathbf{x}^{(ca)} )$.

\begin{theorem}
  \label{thm:triplet-sum}
  Under the notation above and the event $\cE_d$ as defined in
  Section \ref{sec:condorcet-intro}, for $d = \sqrt{n}/\log n$,
  \begin{align*}
    \lim_{n \to \infty }\Pr\left[
    f (\mathbf{x}^{(ab)} ) = f (\mathbf{x}^{(bc)} )
    = f (\mathbf{x}^{(ca)} ) \mid \cE_{d} 
    \right] = \alpha^* \; ,
  \end{align*}
  where $\alpha^* \approx 23.2\%$ is an absolute constant.
\end{theorem}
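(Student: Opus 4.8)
The plan is to reduce the statement to a three-dimensional trivariate‑normal orthant probability via a multidimensional local limit theorem, exactly in the spirit of the proof of Theorem~\ref{thm:elections-random}, and then to evaluate that orthant integral. Group the $n$ voters into $m=n/3$ triplets, and for each triplet $j$ and each ordered pair of candidates $(pq)$ set $w_j^{(pq)}:=x_{3j-2}^{(pq)}+x_{3j-1}^{(pq)}+x_{3j}^{(pq)}\in\{\pm1,\pm3\}$, so that $S^{(pq)}=\sum_{j=1}^m w_j^{(pq)}$ and $f(\mathbf{x}^{(pq)})=\sgn\big(\sum_{j=1}^m\sgn(w_j^{(pq)})\big)=:\sgn(T^{(pq)})$. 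Write $\mathbf{W}_j:=(w_j^{(ab)},w_j^{(bc)},w_j^{(ca)})$ and $\boldsymbol\sigma_j:=(\sgn w_j^{(ab)},\sgn w_j^{(bc)},\sgn w_j^{(ca)})$. The pairs $(\mathbf{W}_j,\boldsymbol\sigma_j)$ are i.i.d.\ over $j$; the target event is the Condorcet paradox $\{\sgn T^{(ab)}=\sgn T^{(bc)}=\sgn T^{(ca)}\}$, and $\cE_d$ depends only on $\mathbf{S}:=(S^{(ab)},S^{(bc)},S^{(ca)})=\sum_j\mathbf{W}_j$. Note $d=\sqrt n/\log n$ satisfies $d\to\infty$ and $d=o(\sqrt n)$.

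Next I would record the relevant second moments. For a single voter $(x_i^{(ab)},x_i^{(bc)},x_i^{(ca)})$ is uniform over the six non‑constant sign patterns in $\{\pm1\}^3$, so $\E x_i^{(pq)}=0$, $\E(x_i^{(pq)})^2=1$, $\E[x_i^{(ab)}x_i^{(bc)}]=-\tfrac13$ (and cyclically). Using $\Maj_3(y_1,y_2,y_3)=\tfrac12(y_1+y_2+y_3)-\tfrac12 y_1y_2y_3$, one gets, for a single triplet (with $I,J$ the $3\times3$ identity and all-ones matrices; all three auto- and cross-covariances are of the form $aI+bJ$ by the cyclic and transposition symmetries of the model): $\E\mathbf{W}_j=\E\boldsymbol\sigma_j=0$; $\Cov(\mathbf{W}_j)=4I-J$ (diagonal $3$, off-diagonal $-1$); $\E[\sgn w_j^{(ab)}\cdot w_j^{(ab)}]=\E|w_j^{(ab)}|=\tfrac32$ and $\E[\sgn w_j^{(ab)}\cdot w_j^{(bc)}]=\sum_{\ell=1}^3\tfrac12\E[x_\ell^{(ab)}x_\ell^{(bc)}]=-\tfrac12$, hence $\Cov(\boldsymbol\sigma_j,\mathbf{W}_j)=2I-\tfrac12 J$ (symmetric); and $\E[\sgn w_j^{(ab)}\sgn w_j^{(bc)}]=\tfrac14(-1)+\tfrac14(-\tfrac1{27})=-\tfrac{7}{27}$, so $\Cov(\boldsymbol\sigma_j)=\tfrac{34}{27}I-\tfrac{7}{27}J$.

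The probabilistic core is a conditional CLT: since $(\mathbf{W}_j,\boldsymbol\sigma_j)$ are i.i.d.\ with non-degenerate covariance and their common law generates a full-rank sublattice of $\mathbb{Z}^6$, and since $d\to\infty$, $d=o(\sqrt n)$, the same multidimensional local limit theorem argument as in the proof of Theorem~\ref{thm:elections-random} shows that, conditionally on $\cE_d$, the vector $\mathbf{T}/\sqrt m$ converges in law to $\mathcal{N}(0,M)$, where $M$ is the Schur complement
\[
  M=\Cov(\boldsymbol\sigma_j)-\Cov(\boldsymbol\sigma_j,\mathbf{W}_j)\,\Cov(\mathbf{W}_j)^{-1}\,\Cov(\mathbf{W}_j,\boldsymbol\sigma_j),
\]
i.e.\ the covariance of the Gaussian limit of the $\boldsymbol\sigma$-sums conditioned on the Gaussian limit of the $\mathbf{W}$-sums being $0$. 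Concretely, the local limit theorem for the $3$-dimensional lattice walk $\mathbf{S}$ gives $\Pr[\mathbf{S}=\mathbf{s}]\sim c\,m^{-3/2}$ uniformly over $\|\mathbf{s}\|_\infty\le d=o(\sqrt m)$, so conditioning on $\cE_d$ pins $\mathbf{S}/\sqrt m$ into an $o(1)$-neighbourhood of the origin; a Fourier expansion of the joint characteristic function of $(\mathbf{S},\mathbf{T})$ near the origin, inverted in the $\mathbf{S}$-variable, then yields the conditional CLT for $\mathbf{T}/\sqrt m$ with limiting covariance $M$, uniformly over such $\mathbf{s}$. Establishing this uniform conditional CLT — equivalently, verifying the hypotheses of the local limit theorem and controlling the characteristic function away from the origin — is the main technical obstacle; everything else is a finite computation. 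Since $M$ is non-degenerate, each orthant boundary is $\mathcal{N}(0,M)$-null, so $\Pr[\cdot\mid\cE_d]$ converges to $\Pr[\text{all coordinates of }\mathcal{N}(0,M)\text{ positive}]+\Pr[\text{all negative}]$.

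Finally, evaluate the orthant probability. All matrices above lie in the commutative algebra $\{aI+bJ\}$, so $M=\alpha I+\beta J$ is found by diagonalising on $\mathbf{1}^\perp$ and on $\mathrm{span}(\mathbf{1})$: on $\mathbf{1}^\perp$ the Schur complement equals $\tfrac{34}{27}-\tfrac{2^2}{4}=\tfrac{7}{27}$, and on $\mathrm{span}(\mathbf{1})$ it equals $(1+2(-\tfrac7{27}))-(\tfrac12)^2=\tfrac{13}{27}-\tfrac14=\tfrac{25}{108}$; hence $\alpha=\tfrac7{27}$ and $\alpha+3\beta=\tfrac{25}{108}$, so $\beta=-\tfrac1{108}$, the common variance is $\alpha+\beta=\tfrac14$, and the common correlation is $\rho^*=\beta/(\alpha+\beta)=-\tfrac1{27}$. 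By the classical trivariate-normal orthant formula (consistent with $P_{\mathrm{Cond}}(3)$ in~\eqref{eq:43}), $\Pr[\text{all coordinates positive}]=\tfrac18+\tfrac{3}{4\pi}\arcsin\rho^*$, and therefore
\[
  \alpha^*=\tfrac14+\tfrac{3}{2\pi}\arcsin\!\big(-\tfrac1{27}\big)=\tfrac14-\tfrac{3}{2\pi}\arcsin\tfrac1{27}\approx 0.232,
\]
which is the asserted constant.
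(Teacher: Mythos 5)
Your reduction and all of the finite computations are correct, and you land on exactly the same limiting object as the paper: conditionally on the $\mathbf{W}$-sums being pinned near zero, the three normalized sign-sums converge to an equicorrelated trivariate normal with correlation $-\tfrac1{27}$. Your Schur-complement matrix $M$ (diagonal $\tfrac14$, off-diagonal $-\tfrac1{108}$) is precisely the covariance of the residual $\tfrac12\tilR^{(kk')}$ in the paper's decomposition $\tilN^{(kk')}=\tfrac{\sqrt3}{2}\tilM^{(kk')}+\tfrac12\tilR^{(kk')}$ with $\Cov\big[\tilR^{(kk')},\tilR^{(k'k'')}\big]=-\tfrac1{27}$, and your closed form $\alpha^*=\tfrac14-\tfrac{3}{2\pi}\arcsin\tfrac1{27}$ is a nice sharpening of the paper's numerically evaluated orthant probability. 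The genuine divergence is in the limit theorem used to justify the conditioning, and this is also where your write-up has its one real gap. You invoke a uniform conditional local CLT for the six-dimensional lattice walk $(\mathbf S,\mathbf T)$ (local in $\mathbf S$, central in $\mathbf T$) and explicitly leave it unproved; making it rigorous requires handling the periodicity of the walk (each $S^{(pq)}$ and $T^{(pq)}$ has fixed parity and $S^{(pq)}-T^{(pq)}$ is always even, so the steps generate a proper sublattice of $\mathbb{Z}^6$) as well as uniformity over $\|\mathbf s\|_\infty\le d$. The paper sidesteps this entirely by exploiting the specific choice $d=\sqrt n/\log n$: the box $\|\tilA\|_\infty\le d/\sqrt{3m}\asymp 1/\log n$ has Gaussian probability of order $\log^{-3}n$, which dominates the $O(m^{-1/2})$ error of a multidimensional Berry--Esseen bound for the six-vector $(\tilA,\tilB)$, so one applies Berry--Esseen separately to the numerator and denominator of $\Pr[\,\cdot\wedge\cE_d]/\Pr[\cE_d]$ and replaces both by Gaussian box/orthant probabilities. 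For the $d$ in the statement this is strictly simpler than your route; your approach, if completed, would yield the conclusion for the whole range $1\ll d\ll\sqrt n$ (and a rate), but as written the key analytic step is asserted rather than proved.
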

For comparison, without conditioning the Condorcet paradox
probability is $\approx 12.5\%$ when the elections are according to $f$
and $\approx 8.8\%$ according to majority.

The idea for the proof of Theorem~\ref{thm:triplet-sum} is to use multivariate
Berry-Esseen theorem for random variables
\begin{align*}
\left(A^{(kk')}, B^{(kk')}\right)_{(kk')} :=
\left(\sum_{i=1}^n x_i^{(kk')}, \sum_{i=1}^m \sgn\left(w_i^{(kk')}\right)
  \right)_{(kk')},\; kk' \in \{ab, bc, ca\}\;.
\end{align*}
We are looking at sign patterns of $B^{(kk')}$ conditioned on small absolute
values of $A^{(kk')}$. $A^{(kk')}$ and $B^{(kk')}$ are not perfectly correlated
and it turns out that part of (negative) correlations between
$B^{(ab)}, B^{(bc)}$ and $B^{(ca)}$ is not attributable to
correlations between $A^{(ab)}$, $A^{(bc)}$ and $A^{(ca)}$. Hence, even
after conditioning on small $A^{(kk')}$ there remains a small constant
correlation between $B^{(kk')}$, which prevents completely chaotic behavior.

Another promising definition of closeness involves the noise operator
$T_\rho$ from the analysis of Boolean functions (see {\it e.g.,}~\cite{Dol14}
for more details). Let $\rho \in [-1, 1]$
and $\mathbf{x} \in \{-1, 1\}^n$. Define a probability distribution
$N_{\rho}(\mathbf{x})$ over $\{-1, 1\}^n$ such that
$y_1, \ldots, y_n$ are sampled independently with $y_i = -x_i$ with
probability $\eps := \frac{1-\rho}{2}$ and $y_i = x_i$ otherwise.
Note that $\EE[x_iy_i] = \rho$, hence we say that a pair $(\mathbf{x},
\mathbf{y})$ sampled as uniform $\mathbf{x}$ and then $\mathbf{y}$
according to $N_\rho(\mathbf{x})$ is \emph{$\rho$-correlated}.
The \emph{noise operator}
$T_\rho$ is defined as
\begin{align*}
  T_\rho f(\mathbf{x}) := \EE_{\mathbf{y} \sim N_\rho(\mathbf{x})} \left[
  f(\mathbf{y}) \right] \; .
\end{align*}

For $\rho \in (0, 1)$ one can think of
$N_\rho(\mathbf{x})$ as a distribution over $\{-1, 1\}^n$ with probabilities
that are decreasing in the Hamming distance from $\mathbf{x}$.
Furthermore, for $f$ being majority and $d = o(\sqrt{n})$
the condition $\left| \sum_{i=1}^n x_i \right| \le d$
is asymptotically equivalent to
$\left|T_\rho \Maj\left(\mathbf{x}\right)\right| \le C_\rho d /\sqrt{n}$. 
This suggests that it may be fruitful to define ``$d$-close'' as
``$ |T_\rho f (\mathbf{x}^{(kk')} ) | \le d/\sqrt{n}$''.
The idea becomes even more appealing when considering a Fourier-analytic
Condorcet formula discovered by Kalai \cite{Kal02}.
He showed that for an odd function $g\colon \{-1, 1\}^n \to \{-1, 1\}$,
the probability of Condorcet paradox without conditioning is equal to
\begin{align}
  \Pr\left[ g (\mathbf{x}^{(ab)} ) = g (\mathbf{x}^{(bc)} )
  = g (\mathbf{x}^{(ca)} ) \right]
  &=
    \frac{1}{4}\left(1 - 3\EE_{\mathbf{x}, \mathbf{y}}
    \left[ g(\mathbf{x})g(\mathbf{y})\right] \right)\nonumber\\
  &= \frac{1}{4}\left(1 - 3\EE_{\mathbf{x}} \left[
    g(\mathbf{x}) T_{1/3} g(\mathbf{x}) \right]\right) \; ,
    \label{eq:44}
\end{align}
where $(\mathbf{x}, \mathbf{y})$ are $1/3$-correlated.

Another feature of the $T_\rho$ operator is that for noise sensitive functions
(which \cite{Kal10} proved to be exactly those that result in chaotic
elections without conditioning) the value $|T_\rho f(\mathbf{x})|$ is $o(1)$
with high probability over $\mathbf{x}$. If we decide to use
$|T_\rho f(\mathbf{x})|$ as a measure of closeness, then this fact can be
given the following (though by no means the only possible) interpretation:
elections held according to a noise sensitive function are almost
always close.

Recall our ``majority of triplets'' function $f$ and
define the event $\cF_{\rho, d}$ as
\begin{align*}
  \cF_{\rho, d} :\equiv \quad
  \max\left(
  \big| T_{\rho} f ( \mathbf{x}^{(ab)}  ) \big|,
  \big| T_{\rho} f ( \mathbf{x}^{(bc)}  ) \big|,
  \big| T_{\rho} f ( \mathbf{x}^{(ca)}  ) \big|
  \right) \le \frac{d}{\sqrt{m}} \; .
\end{align*}
At first sight, \eqref{eq:44} suggests that the event $\cF_{\rho, d}$, with $\rho=1/3$ and
$d = o(\sqrt{m})$, should cause the expectation term in \eqref{eq:44}
to vanish and the probability of Condorcet paradox to approach $1/4$.
Surprisingly, this is not the case for $f$:

\begin{theorem}
  \label{thm:triplet-noise}
  Fix $\rho \in (0, 1)$ and take
  $d := \sqrt{m}/\log m$.
  Then,
  \begin{align*}
    \lim_{n \to \infty} \Pr\left[
    f (\mathbf{x}^{(ab)} ) = f (\mathbf{x}^{(bc)} )
    = f (\mathbf{x}^{(ca)} ) \mid \cF_{\rho, d}
    \right] = \alpha(\rho) \; ,
  \end{align*}
  where $\alpha(\rho) \in [0.17, \alpha^*]$ with
  $\alpha^*$ the constant from Theorem~\ref{thm:triplet-sum}
  and $\alpha(\rho) \to \alpha^*$ as $\rho \to 0^+$.
\end{theorem}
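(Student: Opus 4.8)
The plan is to reduce the conditioning event $\cF_{\rho,d}$ to a sum-type conditioning of the kind already handled in Theorem~\ref{thm:triplet-sum}, and then to run the same multivariate local central limit theorem; the resulting Gaussian computation turns out to be surprisingly explicit. The first observation is that $T_\rho f(\mathbf{x})$, being the average of $f(\mathbf{y})$ over a $\rho$-correlated copy $\mathbf{y}$ of $\mathbf{x}$, depends on $\mathbf{x}$ only through $\mathbf{w}=(w_1,\dots,w_m)$: since $f$ is a two-level majority over disjoint triplets, conditionally on $\mathbf{x}$ the triplet sums of $\mathbf{y}$ are independent, so $T_\rho f(\mathbf{x})=\E\big[\sgn\big(\sum_{i=1}^m\zeta_i\big)\big]$ where the $\zeta_i$ are independent $\pm1$ variables with $\E\zeta_i=\phi_\rho(w_i)$ and $\phi_\rho:=T_\rho\Maj_3$. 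Because $\widehat{\Maj_3}$ is supported on levels $1$ and $3$, one has the exact formula $\phi_\rho(u_1,u_2,u_3)=\tfrac{\rho}{2}(u_1+u_2+u_3)-\tfrac{\rho^3}{2}u_1u_2u_3$; in particular $\phi_\rho$ equals $\pm a_\rho$ when $|w_i|=1$ and $\pm b_\rho$ when $|w_i|=3$, with $a_\rho=\tfrac12(\rho+\rho^3)$ and $b_\rho=\tfrac12(3\rho-\rho^3)$, both in $(0,1)$ for $\rho\in(0,1)$.

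The second and technically heaviest step reduces $\cF_{\rho,d}$ to a sum condition. A one-dimensional Berry--Esseen estimate applied to $\sum_i\zeta_i$, together with the concentration of its conditional variance $\sum_i(1-\phi_\rho(w_i)^2)$ around a positive multiple of $m$, shows that on the relevant event $T_\rho f(\mathbf{x})=2\Phi\big(\sum_i\phi_\rho(w_i)/\sqrt{\sum_i(1-\phi_\rho(w_i)^2)}\big)-1+O(m^{-1/2})$, and hence that the event $\cF_{\rho,d}$, with $d=\sqrt m/\log m$, agrees up to an event of probability $o(\Pr[\cF_{\rho,d}])$ with $\cE'_{d'}:=\{\,|\sum_{i=1}^m\phi_\rho(w_i^{(kk')})|\le d'\text{ for all }kk'\}$ for a suitable $d'=\Theta(\sqrt m/\log m)=o(\sqrt m)$. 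Consequently $\Pr[\,\text{Condorcet}\mid\cF_{\rho,d}]$ and $\Pr[\,\text{Condorcet}\mid\cE'_{d'}]$ have the same limit. The delicate point is precisely this error accounting: one has to ensure the Gaussian-approximation error for $T_\rho f$ is $O(m^{-1/2})$ — in particular $o(1/\log m)$ — so that the boundary band around the threshold, intersected with the two remaining ``$d'$-close'' constraints, is of genuinely smaller order than $\Pr[\cF_{\rho,d}]=\Theta(\log^{-3}m)$.

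The third step applies the multivariate local limit theorem exactly as in the proof of Theorem~\ref{thm:triplet-sum}. Writing $\sum_i\phi_\rho(w_i^{(kk')})=\tfrac{\rho}{2}A^{(kk')}-\tfrac{\rho^3}{2}C^{(kk')}$ with $A^{(kk')}=\sum_\ell x_\ell^{(kk')}$ and $C^{(kk')}=\sum_i x_{3i-2}^{(kk')}x_{3i-1}^{(kk')}x_{3i}^{(kk')}$, and noting that $\Maj_3$ of triplet $i$ equals $\tfrac12(A_i^{(kk')}-C_i^{(kk')})$, one applies the local CLT to the integer-valued, i.i.d.-summed vector $(A^{(kk')},C^{(kk')})_{kk'}\in\mathbb Z^6$; conditioning on $\cE'_{d'}$ with $d'=o(\sqrt m)$ then passes in the limit to conditioning the Gaussian limit $(\tilde A^{(kk')},\tilde C^{(kk')})$ on $\tilde A^{(kk')}=\rho^2\tilde C^{(kk')}$ for all three pairs. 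A short per-triplet computation gives the covariances: $\Sigma_{AA}$ has diagonal $3$ and off-diagonal $-1$, $\Sigma_{CC}$ has diagonal $1$ and off-diagonal $-1/27$, and --- crucially --- $\Sigma_{AC}=0$, so $\tilde A$ and $\tilde C$ are independent. On the event $\tilde A^{(kk')}=\rho^2\tilde C^{(kk')}$ one has $\tilde Q^{(kk')}:=\tfrac12(\tilde A^{(kk')}-\tilde C^{(kk')})=\tfrac{\rho^2-1}{2\rho^2}\tilde A^{(kk')}$, so the Condorcet event becomes ``$\sgn\tilde A^{(ab)}=\sgn\tilde A^{(bc)}=\sgn\tilde A^{(ca)}$'', and the conditional law of $(\tilde A^{(kk')})$ is a centered, cyclically symmetric Gaussian. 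Diagonalizing in the algebra spanned by the identity and the all-ones matrix (where $\Sigma_{AA}$ has eigenvalues $4$ and $1$ and $\Sigma_{CC}$ has eigenvalues $\tfrac{28}{27}$ and $\tfrac{25}{27}$) gives the conditional correlation
\begin{align*}
r_\rho \;=\; -\,\frac{27+175\rho^4}{3\,(243+175\rho^4)}\,,
\end{align*}
and the trivariate equicorrelated orthant formula yields $\alpha(\rho)=\tfrac14+\tfrac{3}{2\pi}\arcsin r_\rho$.

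Finally, the quantitative claims follow from the explicit formula. The map $t\mapsto(27+175t)/(243+175t)$ is strictly increasing on $[0,1]$, so $r_\rho$ is strictly decreasing in $\rho^4$, decreasing from $-1/27$ (as $\rho\to0^+$) to $-101/627$ (as $\rho\to1^-$); since $\arcsin$ is increasing, $\alpha(\rho)$ decreases from $\alpha^*=\tfrac14-\tfrac{3}{2\pi}\arcsin(1/27)\approx 0.232$ down to about $0.173$, giving $\alpha(\rho)\in[0.17,\alpha^*]$ for all $\rho\in(0,1)$ and $\alpha(\rho)\to\alpha^*$ as $\rho\to0^+$; the last limit is also transparent structurally, since $\phi_\rho/\rho\to\tfrac12(u_1+u_2+u_3)$ as $\rho\to0$, so the conditioning degenerates to the one of Theorem~\ref{thm:triplet-sum}. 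I expect the Berry--Esseen reduction of $\cF_{\rho,d}$ in the second step --- with its careful tracking of errors against the polylogarithmically small probability of $\cF_{\rho,d}$ --- to be the main obstacle; the remaining steps are a routine, and pleasantly explicit, instance of the local-CLT machinery already developed for Theorem~\ref{thm:triplet-sum}.
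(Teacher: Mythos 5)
Your proposal is correct and shares the paper's three-stage architecture: (i) replace $\cF_{\rho,d}$ by a ``small linear statistic of $\mathbf{w}$'' event up to a symmetric difference of probability $o(\log^{-3}m)$; (ii) pass to a six-dimensional Gaussian limit; (iii) evaluate a trivariate equicorrelated orthant probability for the residual. Stage (i), which you rightly single out as the main obstacle, is precisely the paper's Lemma~\ref{lem:linear-noise} (Berry--Esseen for the inner sum, concentration of the conditional variance, Taylor expansion of $\erf$, and an $O(\log^{-5}m)$ bound on the symmetric difference measured against $\Pr[\cF_{\rho,d}]=\Theta(\log^{-3}m)$); your sketch is the right argument, and your $\sum_i\phi_\rho(w_i)$ is, up to a factor $2\sqrt m$, exactly the paper's $\tilA^{(kk')}=q_3V_3+q_1V_1-q_1V_{-1}-q_3V_{-3}$. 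Where you genuinely diverge is stage (iii): the paper works in the coordinates $(\tilA^{(kk')},\tilB^{(kk')})$, decomposes $\tilN^{(kk')}=\alpha\tilM^{(kk')}+\beta(\tilM^{(k'k'')}+\tilM^{(k''k)})+\gamma\tilR^{(kk')}$ with computer-algebra assistance, and proves only the one-sided bound $\Cov[\tilR^{(kk')},\tilR^{(k'k'')}]\le-1/27$, which gives the interval $[0.17,\alpha^*]$ but no formula. You instead use the Fourier coordinates $(A,C)$, observe that the $A$- and $C$-blocks are uncorrelated (a one-line check: all $A$--$C$ cross-moments vanish and $\Cov[C_i^{(kk')},C_i^{(k'k'')}]=(-1/3)^3=-1/27$), and diagonalize simultaneously in the span of the identity and the all-ones matrix. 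I verified the resulting conditional correlation $r_\rho=-(27+175\rho^4)/(729+525\rho^4)$ and the formula $\alpha(\rho)=\tfrac14+\tfrac{3}{2\pi}\arcsin r_\rho$: they are correct, recover $\alpha^*$ as $\rho\to0^+$, decrease to $\approx 0.1727$ as $\rho\to1^-$, and hence imply the stated theorem; this is strictly more explicit than what the paper proves. Two minor points: the proofs of Theorems~\ref{thm:triplet-sum} and~\ref{thm:triplet-noise} use a multivariate Berry--Esseen bound rather than a local CLT, which is also what you want here (a literal local CLT for $(A,C)$ would have to contend with the sublattice constraint $A_i\equiv C_i+2\pmod 4$); and the passage from conditioning on $\{|\tfrac{\rho}{2}A-\tfrac{\rho^3}{2}C|\le d'\}$ to the degenerate conditioning $\tilde A=\rho^2\tilde C$ should be carried out, as in the paper, by splitting off the component of the sign-sum independent of the conditioning statistic and absorbing an $O(d'/\sqrt m)=O(1/\log m)$ error.
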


The proof of Theorem~\ref{thm:triplet-noise} is a variation on 
the proof of Theorem~\ref{thm:triplet-sum}. For $\mathbf{w} \in \{\pm 3, \pm 1\}^m$
and $b \in \{\pm 3, \pm 1\}$, we let
$W_b(\mathbf{w}) := \left| \left\{ i \in [m]: w_i = b \right\}\right|$
and 
$V_b(\mathbf{w}) := W_b(\mathbf{w}) - \EE_{\mathbf{w'}}\left[W_b(\mathbf{w'})\right]$.
Then, we observe that, just as for majority the value of $T_\rho \Maj(\mathbf{x})$
is proportional to the number of ones in $\mathbf{x}$ minus $n/2$,
also for $f$ the value of $T_\rho f(\mathbf{w})$ is proportional to a certain
linear combination of $V_b(\mathbf{w})$. This allows us to proceed with
an identical argument as in Theorem~\ref{thm:triplet-sum} with appropriately
redefined random variables $A^{(kk')}$.

Some more recent results show that, without conditioning, majority in fact
maximizes the probability of Condorcet winner among ``low-influence functions''
(see \cite{MOO10} for three voters and
\cite{Mos10, IsakssonMossel:12} for general case). This contrasts
with Theorems~\ref{thm:triplet-sum} and~\ref{thm:triplet-noise}
for different definitions of close elections.

\subsection{Arrow's theorem for dice}

To further consider the parallels between dice and social
choice, we also ask if there is a dice analogue of Arrow's theorem
(and its quantitative version). We obtain a rather generic statement
that does not use any properties of dice and a quantitative version
which is a restatement of a result on tournaments by Fox
and Sudakov \cite{FS08}.

\paragraph{Organization of the paper}
The proofs of our main theorems are located in Sections~\ref{sec:nonuniform}
(Theorem~\ref{con:weak}),~\ref{sec:stationary} (Theorem~\ref{SY-all}),~\ref{sec:condorcet}
(Theorem~\ref{thm:elections-random}) and~\ref{sec:triplet}
(Theorems~\ref{thm:triplet-sum} and~\ref{thm:triplet-noise}).
Section~\ref{sec:arrow} contains the discussion of Arrow's theorem for dice.
The sections are mostly self-contained and can be read in any order.

\section{Transitivity of non-uniform dice}
\label{sec:nonuniform}

In this section we are going to prove Theorem~\ref{con:weak}. Let us start
with some notation.
For the sake of readability, in this section we drop the bold typesetting for dice vectors. 
We let 
$$W^{(kk')}_{ij} := \1(k_i > k'_j)$$ for $k,k' \in \{a,b,c\}$ and
$$W^{(kk')} = \sum_{i,j=1}^n W^{(kk')}_{ij}.$$
We also let $V^{(kk')} := \sum_{i=1}^n F(k_i)-F(k'_i)$.
An important value that we will use is
\begin{align}
  A := \EE[a_1 F(a_1)] \; .
\end{align}
The constant $A$ is significant because it distinguishes the uniform
distribution: by Cauchy-Schwarz we have
\begin{align*}
A^2 = \EE[a_1F(a_1)]^2 = \EE[a_1(F(a_1)-1/2)]^2 \le
  \Var[a_1] \cdot \Var[F(a_1)] = \frac{1}{12} 
\end{align*}
(note that $F(a_1)$ is uniform in $(0, 1)$, so $\EE[F(a_1)]=1/2$
and $\Var[F(a_1)]=1/12$). On the other hand, since $a_1$ and $F(a_1)$
are linearly dependent if and only if distribution of $a_1$ is uniform
on $(-\sqrt{3},\sqrt{3})$, the equality $A^2 = 1/12$ is achieved
exactly for the uniform distribution. In the non-uniform case, this leads
to a key cancellation leading to~\eq{eq:62} below.

Since for a non-uniform distribution clearly we have
$$\Pr\left[\sum_{i=1}^n F(k_i) = \sum_{i=1}^n F(k'_i)\mid\cE_0\right] = 0$$
(see also the proof of Proposition~\ref{prop:general-anti}), the second
statement of Theorem~\ref{con:weak} follows from the first.
What needs to be done can be summed up in two propositions.
In the following proof we assume conditioning on $\cE_0$ and drop it
from the notation for readability.
We also note that 
constants hidden in $O(\cdot), o(\cdot)$, etc., are allowed to depend on the
distribution~$F$.

\begin{proposition}
  \label{prop:half-variance}
  \begin{align}
    \Var\left[ W^{(ab)} - n V^{(ab)} \right]
    &= o(n^{3}) \; . \label{eq:62}
  \end{align}
\end{proposition}


\begin{proposition}
  \label{prop:general-anti}
  For every $C \in \mathbb{R}$ and $\eps > 0$,
  \begin{align}
    \label{eq:69}
    \Pr\left[ \frac{V^{(ab)}}{\sqrt{n}} \in [C-\eps, C+\eps]
    \right] = O(\eps)+O\left(\frac{1}{\sqrt{n}}\right)\; ,
  \end{align}
  where the $O(\cdot)$ constants do not depend on $C$ or $\eps$.
\end{proposition}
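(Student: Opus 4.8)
We need to show $\Pr[V^{(ab)}/\sqrt{n} \in [C-\eps, C+\eps]] = O(\eps) + O(1/\sqrt{n})$ uniformly in $C$, where $V^{(ab)} = \sum_{i=1}^n F(a_i) - F(b_i)$, and everything is conditioned on $\cE_0$ (all three face-sums equal zero). Since $\bfa$ and $\bfb$ are independent and the conditioning $\cE_0$ factorizes over the three dice, it suffices to control the conditional distribution of $\sum_{i=1}^n F(a_i)$ given $\sum a_i = 0$: if that has a bounded density (after scaling by $\sqrt n$), then so does the difference of two independent such quantities, and anti-concentration of $V^{(ab)}/\sqrt{n}$ follows. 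So the real task is a local-type estimate for $U_n := \sum_{i=1}^n F(a_i)$ conditioned on $\sum_{i=1}^n a_i = 0$, where the $a_i$ are i.i.d. with the given continuous mean-zero variance-one law.

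**Main idea: pair up the coordinates and condition on pair-sums.** Following the sketch in the introduction, I would group the faces as $(a_1,a_2), (a_3,a_4), \ldots, (a_{n-1}, a_n)$ (assume $n$ even; handle the parity with a trivial extra coordinate). Condition on the values $T_\ell := a_{2\ell-1} + a_{2\ell}$ for $\ell = 1, \ldots, n/2$. The event $\{\sum_i a_i = 0\}$ is exactly $\{\sum_\ell T_\ell = 0\}$, which is measurable with respect to the $T_\ell$'s; moreover, \emph{given} the vector $(T_1,\ldots,T_{n/2})$, the pairs $(a_{2\ell-1}, a_{2\ell})$ are independent across $\ell$, and within each pair $a_{2\ell-1}$ is distributed according to the conditional law of $a_{2\ell-1}$ given $a_{2\ell-1}+a_{2\ell} = T_\ell$. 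Write $g_\ell := F(a_{2\ell-1}) + F(a_{2\ell})$. Then $U_n = \sum_\ell g_\ell$ is, conditionally on the $T_\ell$'s, a sum of independent (not identically distributed) bounded random variables. The plan is to apply a Berry--Esseen / Esseen-type anti-concentration bound (e.g. via the concentration-function inequality: for independent summands, the Lévy concentration function of the sum at scale $\sqrt n$ is $O(1/\sqrt{n})$ provided enough of the summands are individually non-degenerate at a fixed scale) to conclude that $\sup_C \Pr[U_n \in [C-\eps\sqrt n, C+\eps\sqrt n] \mid T_1,\ldots,T_{n/2}] = O(\eps) + O(1/\sqrt n)$, and then integrate over the conditioning.

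**The key non-degeneracy input.** The argument above needs: for a positive fraction of indices $\ell$, the conditional variance $\Var[F(a_{2\ell-1}) \mid a_{2\ell-1} + a_{2\ell} = T_\ell]$ is bounded below by a positive constant, with probability bounded away from zero over the random value $T_\ell$. This is where the hypothesis that $f$ is \emph{not} uniform on $[-\sqrt 3, \sqrt 3]$ and that $f$ is continuous on its (closed) support enters: if $F(a_{2\ell-1})$ were a.s. constant given the pair-sum, then $F(x) + F(t-x)$ would be independent of $x$ on an interval, forcing $F$ to be affine there, i.e. $f$ constant; chasing this across the support forces the uniform distribution. I would isolate this as a short lemma: for a typical pair-sum value $t$ (typical meaning $t$ in a fixed compact set of positive probability, which exists since $a_1+a_2$ has mean $0$ and variance $2$), the conditional law of $a_1$ given $a_1+a_2 = t$ is genuinely spread out, hence so is $F(a_1)$, hence $g_\ell = F(a_1)+F(a_2)$ has conditional variance $\ge \delta$ for some fixed $\delta > 0$ with conditional probability $\ge \delta$. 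A second-moment / Paley--Zygmund argument then guarantees that, with probability tending to one, at least $cn$ of the $n/2$ indices are "good," which is more than enough non-degenerate summands to drive the concentration function down to $O(1/\sqrt n)$.

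**Expected main obstacle.** The delicate point is making the non-degeneracy lemma fully rigorous and uniform: the conditional density of $a_1$ given $a_1 + a_2 = t$ is $f(x)f(t-x)/f^{*2}(t)$, and I need a quantitative lower bound on its spread that holds for all $t$ in a fixed compact set of positive $f^{*2}$-measure, using only continuity of $f$ on a closed support and non-uniformity. Degenerate-looking cases — support an interval where $f$ is very small near the endpoints, or $f^{*2}(t)$ small — have to be excluded by shrinking to a compact subset where $f^{*2}$ is bounded below, which is legitimate since $f^{*2}$ is continuous and positive on the interior of the (Minkowski) sum of the support with itself. After that, the passage from "$cn$ non-degenerate bounded independent summands" to "$O(\eps) + O(1/\sqrt n)$ anti-concentration at scale $\sqrt n$" is a standard application of Esseen's inequality for the concentration function, and integrating out the conditioning on $(T_\ell)$ is immediate since the bound is uniform in the conditioning values. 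Finally, I would note that the $O(1/\sqrt n)$ term can in fact be absorbed: the statement as given already allows it, so no further work is needed there.
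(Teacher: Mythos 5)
Your overall strategy is the same as the paper's: pair the coordinates, condition on the pair-sums $a_{2\ell-1}+a_{2\ell}$ (so that the summands $F(a_{2\ell-1})+F(a_{2\ell})$ become conditionally independent and bounded), use non-uniformity to get a conditional variance lower bound on a constant fraction of pairs, and finish with Berry--Esseen. Two points in your write-up, however, hide real work, and one of them is where the argument as you state it would fail.

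The serious gap is the claim that ``integrating out the conditioning on $(T_\ell)$ is immediate since the bound is uniform in the conditioning values.'' It is not uniform: your conditional Berry--Esseen bound is only valid on the event that at least $cn$ of the pair-sums land in the good compact set, and that event must be shown to have high probability \emph{under the law of $(T_1,\ldots,T_{n/2})$ conditioned on $\sum_\ell T_\ell=0$}, not under the unconditional product law. Since $\{\sum_\ell T_\ell=0\}$ has Lebesgue measure zero, a Paley--Zygmund or second-moment bound on the unconditional law does not transfer; one has to compare the conditional density of $(T_1,\ldots,T_{n-k})$ to the product density, and the ratio is only controlled up to a factor of order $\sqrt{n}$ (coming from a Berry--Esseen \emph{lower} bound on $\Pr[-L\le -\sum_i a_i\le L]$). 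This is exactly what the paper's Lemma~2.5 does, and it is why the paper needs an exponentially small (Chernoff) failure probability for the good-fraction event rather than the polynomial decay a second-moment argument would give: only an exponentially small unconditional bound survives multiplication by $O(\sqrt n)$. You need to add this lemma (or an equivalent) for your plan to close.

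A smaller issue: your ``key non-degeneracy input'' is phrased as a lower bound on $\Var[F(a_{2\ell-1})\mid a_{2\ell-1}+a_{2\ell}=T_\ell]$, and you later write ``hence so is $F(a_1)$, hence $g_\ell=F(a_1)+F(a_2)$ has conditional variance $\ge\delta$.'' That implication is false: for the uniform distribution $F(a_1)$ has positive conditional variance while $g_\ell=F(a_1)+F(t-a_1)$ is conditionally constant. The quantity that must be shown non-degenerate is $x\mapsto F(x)+F(t-x)$ itself, and making this quantitative is precisely where the paper uses the explicit witnesses $x^*,y^*,z^*=(x^*+y^*)/2$ with $F(x^*)+F(y^*)\ne 2F(z^*)$, showing both configurations $(a_1,a_2)\approx(x^*,y^*)$ and $(a_1,a_2)\approx(z^*,z^*)$ are attained with constant conditional probability when $T_\ell\approx 2z^*$. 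Your soft ``degenerate $\Rightarrow$ affine $\Rightarrow$ uniform'' contrapositive identifies the right mechanism but should be replaced by this quantitative version to get a variance bound that is uniform over a set of pair-sum values of positive measure.
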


We note that during the proof of Proposition~\ref{prop:half-variance}
we establish $\Var[W^{(ab)}],\allowbreak\Var[nV^{(ab)}]\allowbreak\ge\Omega(n^3)$,
so indeed Proposition~\ref{prop:half-variance} is saying that
these two random variables are closely correlated.

\begin{proof}[Theorem~\ref{con:weak} follows from the propositions]
  Let $\overline{W}^{(kk')} := W^{(kk')} - \EE[W^{(kk')}] = W^{(kk')} - n^2/2$.
  It is enough to prove that
  \begin{align*}
    \Pr\left[\sgn\left(V^{(ab)}\right) \ne \sgn\left(\overline{W}^{(ab)}\right)\right] =
    o(1) \; .
  \end{align*}
  For any $\delta > 0$, note that
  $\sgn\left(V^{(ab)}\right) \ne \sgn\left(\overline{W}^{(ab)}\right)$
  implies that 
  \begin{center}
  either
  $\left|nV^{(ab)}-\overline{W}^{(ab)}\right| > \delta$
  or $\left|nV^{(ab)}\right| \le \delta$.
  \end{center}
  Furthermore, by Chebyshev's inequality and \eqref{eq:62},
  \begin{align*}
    \Pr\left[\left|\overline{W}^{(ab)}-nV^{(ab)}\right| > \delta\right]
    < \frac{o(n^3)}{\delta^2} \; .
  \end{align*}  
  Taking appropriate $\delta := o(n^{3/2})$, we finally compute
  \begin{align*}
    &\Pr\left[\sgn\left(V^{(ab)}\right)
      \ne\sgn\left(\overline{W}^{(ab)}\right)\right]\\
    &\qquad\qquad\le\Pr\left[\left|nV^{(ab)}-\overline{W}^{(ab)}\right| > \delta \right]
      + \Pr\left[\left|nV^{(ab)}\right| \le \delta\right]\\
    &\qquad\qquad
      =o(1) + O\left(\frac{\delta}{n^{3/2}}\right)
      = o(1) \; ,
  \end{align*}
  where we used \eqref{eq:69} in the last line.
\end{proof}

\begin{remark}
  It is also true that with high probability $a$ beats $b$
  if and only if $\sum_{i=1}^n F_n(a_i)\allowbreak> \sum_{i=1}^n F_n(b_i)$,
  where $F_n$ is the CDF of the \emph{conditional marginal}
  of $a_1$ (or any $a_i$) conditioned on $\cE_0$, rather than the unconditional marginal $F$ as in Theorem~\ref{con:weak}.   
  (Some numerical experiments suggest that
  $F_n$ is a better predictor of the ``strength'' of a die than
  $F$.) To see why this is true, if $V'^{(ab)} := \sum_{i=1}^n F_n(a_i)-F_n(b_i)$,
  then similar calculations
  to those in the proof of Proposition~\ref{prop:half-variance} yield
  \begin{align*}
    \Var\left[V'^{(ab)}-V^{(ab)}\right] = o(n) \; ,
  \end{align*}
  and using this in the bound
  \begin{align*}
    &\quad\Pr\left[\sgn\left(V'^{(ab)}\right)\ne\sgn\left(\overline{W}^{(ab)}\right)
    \right]\\
    &\le\Pr\left[\left|nV^{(ab)}-\overline{W}^{(ab)}\right|>\delta\right]
    +\Pr\left[\left|nV^{(ab)}-nV'^{(ab)}\right|>\delta\right]
    +\Pr\left[\left|nV^{(ab)}\right|\le\delta\right],
  \end{align*}
 the result follows similar to above.
\end{remark}

We proceed to prove the propositions, starting with the shorter proof
of Proposition~\ref{prop:general-anti}. In both proofs
we do \emph{not} assume conditioning on $\cE_0$ by default.

\subsection{Proof of Proposition~\ref{prop:general-anti}}

For simplicity we will assume that $n = 2m$.
The idea of the proof is as follows: First, by independence, it is
enough to establish anti-concentration for the single-die random variable
$\sum_{i=1}^n F(a_i)$.
Since the single-face distribution
is not uniform, there must exist two points $x^*, y^* \in \supp(f)$ such that
\begin{align}
  F(x^*) + F(y^*) \ne 2F(z^*) \; ,\label{eq:104}
\end{align}
where $z^* := \frac{x^*+y^*}{2}$. Consider random variables
$d_1, \ldots, d_m$ given by
\begin{align}
  \label{eq:107}
  d_i := a_{2i-1}+a_{2i} \; .
\end{align}
By a concentration argument, with high probability,
for a constant fraction of coordinates $i \in \{1,\ldots,m\}$,
it must be that $d_i \approx 2z^*$.
Furthermore, after conditioning on $d_1, \ldots, d_m$, for each such coordinate
it must be that for $d_i \approx 2z^*$, both
\begin{equation}\label{eq:105}
\begin{split}
  &\qquad a_{2i-1} \approx x^*, a_{2i} \approx y^*\;,\\
  &\qquad a_{2i-1}, a_{2i} \approx z^*\;,
  \end{split}
\end{equation}
are possible with constant probability. But \eqref{eq:104} and
\eqref{eq:105} imply that, even conditioned on $d_1, \ldots, d_m$,
the variance of $\sum_{i=1}^n F(a_i)$ is at least $\Omega(n)$, and that allows us
to apply Berry-Esseen theorem to establish a (conditional) CLT and
anti-concentration. Below we present this argument in more detail,
starting with an auxiliary concentration lemma.

\begin{lemma}
  \label{lem:conditional-concentration}
  Let $x \in \supp(f)$ and $\delta > 0$. There exist constants
  $\alpha := \alpha(f, \delta) > 0, \beta := \beta(f, \delta) > 0$
  such that
  \begin{align}
    \label{eq:106}
    \Pr\big[ \left|\left\{i \in [n]:
    x-\delta\le a_i\le x+\delta \right\}\right| < \alpha n\mid\cE_0\big]
    \le O\left(\exp\left(-\beta n\right)\right) \; .
  \end{align}
\end{lemma}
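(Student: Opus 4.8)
The plan is to reduce the conditional statement to an unconditional large-deviation estimate. Fix $x \in \supp(f)$ and $\delta > 0$, and write $N := \left|\left\{i \in [n]: x-\delta \le a_i \le x+\delta\right\}\right|$ and $p := \Pr[x-\delta \le a_1 \le x+\delta]$. Because $x \in \supp(f)$ and $f$ is continuous on its support, $p > 0$ (here one uses that $x-\delta \le a_1 \le x + \delta$ has positive probability: the support is a closed interval and $f$ is continuous and, being a density, not identically zero near $x$, so the intersection of $[x-\delta,x+\delta]$ with $\supp(f)$ has positive $f$-measure). Without conditioning, $N \sim \Bin(n, p)$, so by the standard Chernoff bound, $\Pr[N < (p/2) n] \le \exp(-c n)$ for a constant $c = c(f,\delta) > 0$. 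The issue is converting this to a bound under $\cE_0$, i.e.\ under the event that $\sum_{i=1}^n a_i = 0$, which has probability tending to $0$ (it is a density evaluated on a slice), so a naive union bound $\Pr[\cdot \mid \cE_0] \le \Pr[\cdot]/\Pr[\cE_0]$ loses too much.

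The main obstacle — and the heart of the argument — is handling the conditioning on the measure-zero (in the continuous case) event $\cE_0$. The clean way is to work with the conditional density directly. Condition on the value of the face-sum $S := \sum_{i=1}^n a_i$; the event $\cE_0$ is $\{S = 0\}$. The vector $(a_1,\ldots,a_n)$ has density $\prod_i f(a_i)$, and the conditional density of $(a_1,\ldots,a_n)$ given $S = 0$ is $\prod_i f(a_i) / g_n(0)$ on the hyperplane $\{\sum a_i = 0\}$, where $g_n$ is the density of $S$. So for any event $\mathcal{B}$,
\begin{align*}
  \Pr[\mathcal{B} \mid \cE_0] = \frac{1}{g_n(0)} \int_{\{\sum a_i = 0\}} \mathbbm{I}_{\mathcal{B}}(a) \prod_i f(a_i)\, d\sigma(a),
\end{align*}
where $d\sigma$ is Lebesgue measure on the hyperplane. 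Since $f$ has finitely many (say twelve) moments and is bounded, $S/\sqrt{n}$ satisfies a local CLT, so $g_n(0) = \Theta(1/\sqrt{n})$; in particular $g_n(0) \ge c'/\sqrt{n}$ for large $n$. Thus $\Pr[\mathcal{B} \mid \cE_0] \le (\sqrt{n}/c') \Pr[\mathcal{B}]$, and the polynomial loss $\sqrt{n}$ is absorbed by the exponential gain $\exp(-cn)$ from Chernoff. Applying this with $\mathcal{B} = \{N < \alpha n\}$ for $\alpha := p/2$ gives the claim with $\beta$ any constant slightly less than $c$.

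Alternatively — and this may be cleaner to write if one wants to avoid invoking a local CLT for $g_n$ with only boundedness — one can split the coordinates into the pairs $d_i = a_{2i-1} + a_{2i}$ used elsewhere in the section, condition on $(d_1,\ldots,d_m)$, and run a pairwise Chernoff bound: given $d_i = t$, the events $\{x-\delta \le a_{2i-1} \le x+\delta\}$ across $i$ are independent, and each has conditional probability bounded below by a constant on the event that $t$ lies in a suitable range (which itself happens for a constant fraction of $i$ with overwhelming probability, by a first Chernoff bound on the $d_i$). This is exactly the structure already set up in~\eqref{eq:107} and the surrounding discussion. Either route works; I would present the conditional-density route as primary since it is shortest, remarking that $g_n(0) = \Theta(n^{-1/2})$ follows from the finitely-many-moments hypothesis via a standard local limit theorem (e.g.\ Gnedenko's), and that the same estimate reappears in the proof of Proposition~\ref{prop:general-anti}.
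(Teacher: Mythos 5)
Your primary route is essentially the paper's proof: establish the unconditional Chernoff bound for the count, then transfer it through the conditioning by a density-ratio argument, paying only a factor $O(\sqrt{n})$ because the density of the face-sum at $0$ is $\Theta(n^{-1/2})$, and absorb that factor into the exponential. The paper implements exactly this by integrating out $k$ coordinates: the numerator is bounded above by $\sup f^{(*k)}$ times the unconditional probability of a relaxed event, and the denominator below by $\eps\cdot\Pr\left[-a_0\in[-L,L]\right]=\Omega(n^{-1/2})$ via Berry--Esseen on the set where $f^{(*k)}>\eps$.

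Two points in your write-up need repair. First, the inequality $\Pr[\mathcal{B}\mid\cE_0]\le(\sqrt{n}/c')\,\Pr[\mathcal{B}]$ does not follow from $g_n(0)\ge c'/\sqrt{n}$ alone and is false for general $\mathcal{B}$: take $\mathcal{B}=\{|S|\le n^{-10}\}$, for which the left side is $1$ while the right side is $O(n^{-10})$. The surface integral $\int_{\{\sum_i a_i=0\}}\1_{\mathcal{B}}\prod_i f(a_i)\,d\sigma$ is not dominated by $\Pr[\mathcal{B}]$; you must first integrate out a coordinate, bound $f\big(-\sum_{i<n}a_i\big)$ by $\sup f$ (or use $f^{(*k)}$ if $f$ were unbounded), and replace $\mathcal{B}$ by the relaxed event that the count among the first $n-1$ coordinates is below $\alpha n$ --- still a binomial lower tail, so Chernoff applies. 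This is precisely the step the paper writes out with the events $\cF$ and $\cF'$. Second, your alternative pairing route is circular: to guarantee that a constant fraction of the $d_i=a_{2i-1}+a_{2i}$ land in the target range \emph{under the constraint} $\sum_i d_i=0$, you need this very lemma applied to the sequence $(d_i)$; an unconditional Chernoff bound on the $d_i$ does not suffice. That pairing device belongs to the proof of Proposition~\ref{prop:general-anti}, which invokes the present lemma as an input rather than reproving it.
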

\begin{proof}
  We will think of sampling
  $a_1, \ldots, a_n$ conditioned on $\cE_0$ as an experiment on
  $n-k$-dimensional space for some $k \in \mathbb{N}$, where
  the density
  of $(a_1, \ldots, a_{n-k})$
  is proportional to $\prod_{i=1}^{n-k} f(a_i) \cdot f^{(*k)}(-a_0)$,
  with $a_0 := \sum_{i=1}^{n-k} a_i$ and
  $f^{(*k)}$ being the $k$-fold convolution of the PDF $f$.

  Take $\eps > 0$ and consider a set
  \begin{align*}
    I_{k, \eps} := \left\{ x \in \mathbb{R}: f^{(*k)}(x) > \eps \right\} \;.
  \end{align*}
  Since $f$ is continuous and its support is an interval that necessarily contains
  zero, it must be that for every $L > 0$ there exist
  $k$ large enough and $\eps$ small enough such that we have the inclusion
  \begin{align*}
    [-L, L] \subseteq I_{k,\eps} \; .
  \end{align*}
  We take such large enough $L$ (as soon specified)
  and fix $k$ and $\eps$ accordingly.
  Consider the i.i.d.~choice of $a_1, \ldots, a_{n-k}$. By the
  Berry-Esseen theorem, 
    \begin{align}
    \Pr_{a_1,\ldots,a_{n-k}}
    \left[-L \le -a_0 \le L \right]
    &= \Pr\left[\frac{-L}{\sqrt{n-k}} \le g \le \frac{L}{\sqrt{n-k}}
    \right] + O\left(\frac{1}{\sqrt{n}}\right) \notag\\
    &=
    \Omega\left(\frac{1}{\sqrt{n}}\right) \; ,   \label{eq:77}
  \end{align}
  where $g$ is a standard Gaussian random variable, and the last equality uses
  that $L$ can be chosen large enough to overcome the (potentially negative) error in the normal approximation.
  
  Let $\cF$ be the event from \eqref{eq:106}, the probability of which we are
  bounding and define another event $\cF'$ as
  \begin{align*}
    \cF':\equiv \left|\left\{i\in[n-k]: x-\delta\le a_i\le x+\delta\right\}\right|
    < \alpha n \; .
  \end{align*}
  Taking $M$ to be an upper bound on $f^{(*k)}(y)$ for $y \in \mathbb{R}$ and setting $\alpha:=\IP(x-\delta\le a_1\le x+\delta)/2$,
  we compute
 \begin{align*}
   \Pr\left[\cF\mid\cE_0\right]
   &\le \Pr\left[\cF'\mid\cE_0\right]\\
    &=\frac{\idotsint f(a_1) \cdots f(a_{n-k}) \cdot f^{(*k)}(-a_0) \cdot \1[\cF']
      \, \mathrm{d}a_1 \cdots \mathrm{d}a_{n-k}}
      {\idotsint f(a_1) \cdots f(a_{n-k}) \cdot f^{(*k)}(-a_0)
      \, \mathrm{d}a_1 \cdots \mathrm{d}a_{n-k}}\\
    &\le\frac{M \cdot \Pr_{a_1, \ldots, a_{n-k}} [\cF']}
      {\eps \cdot
      \Pr_{a_1, \ldots, a_{n-k}} [-L \le -a_0 \le L]}\\
   &
     \le O\left(\sqrt{n}\right) \cdot \exp\left(-\beta n\right)
     \le O\left(\exp(-\beta' n)\right)\; ,
  \end{align*}
  where in the last line we used a standard Chernoff bound, since
  the random variable
  \begin{align*}
    \left|\left\{ i \in [n-k]: x-\delta\le a_i\le x+\delta \right\}\right|
  \end{align*}
  can be written as a sum of $n-k$ i.i.d.~Bernoulli random variables
  with mean $2\alpha>0$.
\end{proof}

We continue with the proof of Proposition~\ref{prop:general-anti}, following
the plan from the beginning of the section. For now, we will focus only on
one half of the expression $V^{(ab)}$, namely the sum $\sum_{i=1}^n F(a_i)$.

Recall that by \eqref{eq:104} we have $x^*$, $y^*$, $z^* = (x^*+y^*)/2$ such
that 
$$\gamma := |F(x^*)+F(y^*)-2F(z^*)| > 0.$$ Furthermore, since $F$ is
continuous, we can assume that both $x^*$ and $y^*$ lie in the interior
of the support of $f$. Take small $\delta > 0$ such that
\begin{align*}
[x^*-\delta,x^*+\delta], [y^*-\delta, y^*+\delta], [z^*-\delta, z^*+\delta]
  \subseteq \supp(f)
\end{align*}
and, at the same time, 
\begin{align*}
  \left| w-x^* \right|\le 2\delta
  &\implies \left| F(w)-F(x^*) \right| \le\gamma/10 \;,\\
  \left|w-y^*\right|\le2\delta
  &\implies \left|F(w)-F(y^*)\right|\le\gamma/10\;,\\
  \left|w-z^*\right|\le 2\delta
  &\implies \left|F(w)-F(z^*)\right|\le\gamma/10\;.
  \end{align*}

Recall the random variables
$d_1, \ldots, d_m$ that we defined in \eqref{eq:107}. Note that the
distribution of $d_1/\sqrt{2}=(a_1+a_2)/\sqrt{2}$ satisfies the assumptions of
Theorem~\ref{con:weak}. Therefore, we can apply
Lemma~\ref{lem:conditional-concentration} to $d_1, \ldots, d_m$,
$x = 2z^* \in \supp(f^{(*2)})$ and $\delta$ to obtain that except with
probability $\exp(-\Omega(n))$, we have that,
conditioned on $\cE_0$,
\begin{align}
  \label{eq:108}
  \left|\left\{
  i \in [m]: 2z^*-\delta \le d_i \le 2z^*+\delta
  \right\}\right| \ge \Omega(n) \; .
\end{align}
Observe that the distribution $a_1, \ldots, a_n$ conditioned on $\cE_0$
can be obtained by first sampling $d_1, \ldots, d_m$ conditioned
on $\sum_{i=1}^m d_i = 0$ and then sampling
$a_{2i-1}$ and $a_{2i}$ conditioned on $a_{2i-1}+a_{2i}=d_i$
independently for each $i\in[m]$.

Fix a choice of $d_1, \ldots, d_m$ satisfying \eqref{eq:108}.
We will call $i \in [m]$ that fulfills the condition from \eqref{eq:108}
\emph{good}. We will now show that any such good $i$ assumes values
from \eqref{eq:105} with constant probability. To that end,
let us assume without loss of generality that  $d_1$ is good
and consider
$d \in [2z^*-\delta, 2z^*+\delta]$. We compute (where
$o(1)$ is a function that uniformly goes to zero as $\delta$ goes to zero)
\begin{IEEEeqnarray}{l}
  \Pr\left[ x^*-\delta \le a_{1} \le x^*+\delta \mid a_{1}+a_{2}=d\right]
  =
  \frac{\int_{x^*-\delta}^{x^*+\delta}f(x)f(d-x)\,\mathrm{d}x}
    {\int_{\mathbb{R}}f(x)f(d-x)\,\mathrm{d}x}\nonumber\\
  \qquad\ge \frac{\int_{x^*-\delta}^{x^*+\delta}(f(x^*)+o(1))(f(y^*)+o(1))\,\mathrm{d}x}
  {\max_{d \in [2z^*-\delta, 2z^*+\delta]}f^{(2)}(d)}\nonumber\\
  \qquad\ge c \cdot \delta f(x^*)f(y^*) \ge c' > 0 \; ,
  \label{eq:74}
\end{IEEEeqnarray}
where $c'$ is a positive constant achieved for small enough $\delta$.
A similar argument gives
\begin{align}
  \label{eq:75}
  \Pr\left[z^*-\delta \le a_1 \le z^*+\delta \mid a_1+a_2=d\right]
  \ge c' > 0 \;.
\end{align}
Observe that $a_1 \in [x^*-\delta, x^*+\delta]$
implies $\left|F(a_1)-F(x^*)\right| \le \gamma/10$,
$a_2 \in [y^*-2\delta, y^*+2\delta]$, $|F(a_2)-F(y^*)|\allowbreak\le\gamma/10$
and finally
\begin{align*}
  |F(a_1)+F(a_2)-F(x^*)-F(y^*)| \le \gamma/5 \;,
\end{align*}
giving the overall conclusion 
\begin{align}
  \label{eq:109}
  \Pr\Big[ F(a_1)+F(a_2)\le F(x^*)+F^(y^*)+\gamma/5 \mid a_1+a_2=d \Big] \ge c'\;.
\end{align}
Similarly, $a_1 \in [z^*-\delta,z^*+\delta]$ implies
$a_2 \in [z^*-2\delta,z^*+2\delta]$ and consequently
\begin{align*}
  \left|F(a_1)+F(a_2)-2F(z^*)\right| \le \gamma/5\;,
\end{align*}
in particular
\begin{align*}
  F(a_1)+F(a_2)\ge 2F(z^*)-\gamma/5 \ge F(x^*)+F(y^*)+\gamma/5+\gamma/2\
\end{align*}
and
\begin{align}
  \label{eq:110}
  \Pr\Big[
  F(a_1)+F(a_2)\ge F(x^*)+F(y^*)+\gamma/5+\gamma/2\mid a_1+a_2=d
  \Big]\ge c'\;.
\end{align}
Bounds in~\eqref{eq:109} and~\eqref{eq:110}
together imply that
for any good $i$ we can uniformly lower bound the conditional variance
\begin{align*}
  \Var\left[F(a_{2i-1})+F(a_{2i})\mid a_{2i-1}+a_{2i}=d_i\right] \ge
  \Omega(\gamma^2) \ge \Omega(1)\;.
\end{align*}
Since after
conditioning on $d_1, \ldots, d_m$ satisfying \eqref{eq:108},
the random variables $F(a_{2i-1})+F(a_{2i})$ are bounded and independent
with total variance $\Omega(m)$, we
can apply Berry-Esseen theorem and anti-concentration properties
of a standard Gaussian to obtain
\begin{align*}
  &\Pr\left[ C-\eps \le \sum_{i=1}^n \frac{F(a_i)}{\sqrt{n}}
    \le C+\eps \;\Bigm|\; d_1,\ldots,d_m \right]\\
  &\qquad\qquad=
  \Pr\left[ C-\eps \le \sum_{i=1}^m \frac{F(a_{2i-1)}+F(a_{2i})}{\sqrt{2m}}
  \le C+\eps \;\Bigm|\; d_1,\ldots,d_m \right]\\  
  &\qquad\qquad\le O(\eps) + O\left(\frac{1}{\sqrt{n}}\right) \; .
\end{align*}
Actually, since the sums $\sum_{i=1}^n F(a_i)$ and $\sum_{i=1}^n F(b_i)$
are independent even after conditioning on $\cE_0$, we also get
\begin{align*}
  \Pr\left[ C-\eps \le \frac{V^{(ab)}}{\sqrt{n}}
  \le C+\eps \;\Bigm|\; d_1,\ldots,d_m,d'_1, \ldots,d'_m \right]
  \le O(\eps) + O\left(\frac{1}{\sqrt{n}}\right) \; .
\end{align*}
where $d'_i = b_{2i-1}+b_{2i}$ and $d'_1,\ldots,d'_m$ satisfy
condition~\eqref{eq:108}.
Finally, we get~\eqref{eq:69} by averaging over $d_1, \ldots, d_m,
d'_1, \ldots, d'_m$ and absorbing
exponentially small terms coming from the choices that do not satisfy
\eqref{eq:108}.\qed

\begin{remark}
  One could also prove a variant of Proposition~\ref{prop:general-anti}
  by a two-dimensional local CLT argument. For example, Theorem 19.1 in
  \cite{BR10} could be applied to show that $V^{(ab)}/\sqrt{n}$ conditioned
  on $\cE_0$ converges in law to a Gaussian. However, to apply \cite{BR10}
  it needs to be shown that there exists a finite $k$ such that
  the joint distribution of
  \begin{align*}
    \left(\sum_{i=1}^k a_i, \sum_{i=1}^k F(a_i)\right)
  \end{align*}
  has bounded density. Note that since $F(a_i)$ is a deterministic function
  of $a_i$, for $k=1$ the density does not exist. In some cases it
  is not difficult to show that a small $k > 1$ is enough. For example,
  for a shifted exponential distribution with the PDF
  \begin{align*}
    f(x) = \exp(-x-1)
  \end{align*}
  for $x \in [-1,+\infty)$ we can see that $(a_1+a_2,F(a_1)+F(a_2))$ has
  bounded density since the equation system
  \begin{align*}
      a_1+a_2 &= a\\
      F(a_1)+F(a_2)&=a'
  \end{align*}
  has at most one solution for every pair $(a, a')$. On the other hand,
  a distribution with support $[-2, 2]$ that is (up to normalization) uniform on
  $[-2, -1] \cup [1, 2]$ and Gaussian on $(-1, 1)$
  does not have bounded density for any finite $k$.
\end{remark}

\subsection{Proof of Proposition~\ref{prop:half-variance}}

We prove Proposition~\ref{prop:half-variance} by a somewhat tedious computation.
Recall that in this proof we do \emph{not} assume conditioning on $\cE_0$ by default.
Also, for $k\in\{a,b,c\}$, we will denote by $\cE_k$ the single die event
$\sum_{i=1}^n k_i=0$.

The variance we are looking at can be broken down as
\begin{align}
  &\Var\left[ W - n \sum_{i=1}^n F(a_i) - F(b_i) \mid \cE_0 \right]
  = n^2\Var\left[ \sum_{i=1}^n F(a_i) - F(b_i) \mid \cE_0 \right]\nonumber\\
  &\qquad  +\Var[W \mid \cE_0]
   - 2n \sum_{i,j,k=1}^n \EE\left[\1(a_i > b_j)\cdot(F(a_k)-F(b_k))\mid\cE_0\right]
    \; . \label{eq:81}
\end{align}
The idea is to subdivide each of the three terms above into yet smaller pieces,
each of which can be written down as a certain probability involving
(conditioned and unconditioned) die faces. For example,
\begin{align*}
  \EE\left[\1(a_1>b_1)F(a_2)\mid\cE_0\right]
  =\Pr\left[a_1>b_1\land a_2>c_1\mid\cE_a\cap\cE_b\right] \;.
\end{align*}
Each of those probabilities can be estimated using the following idea:
How does the joint distribution of $(a_1, a_2)$ change after conditioning on $\cE_a$?

Let $\tilde{\phi}_{n-2}(x)$ be the PDF of the distribution of
the sum $\sum_{i=3}^n a_i/\sqrt{n-2}$.
The joint density~$f_n$ of $(a_1, a_2)$ conditioned on $\cE_a$ must be proportional
to $f(a_1)f(a_2)$ multiplied by a ``correction factor''
\be{
\phi_{n-2}(-a_1-a_2) := \sqrt{2\pi}\tilde{\phi}_{n-2}((-a_1-a_2)/\sqrt{n-2}),
} 
which is $\sqrt{2\pi (n-2)}$ times larger than the
density of $\sum_{i=1}^{n-2} a_i$ conditioned on $\cE_a$
(our normalization is chosen so that $\phi_{n-2}(x) \approx 1$
for $x\approx 0$):
\begin{align*}
  f_n(a_1, a_2) = C_n f(a_1)f(a_2)\phi_{n-2}(-a_1-a_2)
\end{align*}
for some normalization constant $C_n \approx 1$. By the CLT, we should have
\begin{align}
  \label{eq:85}
  \phi_{n-2}(-x) \approx \exp\left(-\frac{x^2}{2(n-2)}\right)
  \approx
  1-\frac{x^2}{2n}\;,
\end{align}
and consequently
\begin{align}
  &\Pr\left[a_1 > b_1 \land a_2 > c_1 \mid \cE_a\cap\cE_b\right]
    \nonumber\\
  &\qquad\approx
  C_nC'_n\iint_D f(a_1)f(a_2)f(b_1)f(c_1)\left(1-\frac{(a_1+a_2)^2+b_1^2}{2n}\right)
    \, da_1da_2db_1dc_1 \; ,
    \label{eq:102}
\end{align}
where $D := \{(a_1,a_2,b_1,c_1): a_1 > b_1 \land a_2 > c_1\}$ and $C'_n$
is another normalization constant corresponding to the one-dimensional ``density"
$\phi_{n-1}(-b_1)$. From here,
\eqref{eq:102} can be handled by elementary calculus.
The actual computations are more complicated, since we have to carefully track
errors, including those introduced by the CLT.

\paragraph{Calculation lemma}
We will go over the variance computation assuming the following lemma, which will
be proved afterwards.

\begin{lemma}
  \label{lem:double-cov}
  Let $x$ be a random variable distributed according to $F$ and let
  \begin{align*}
    A &:= \EE[x \cdot F(x)]\;,\\
    B &:= \EE[x^2\cdot F(x)]\;,\\
    \alpha_1 &:= \frac{5\gamma_3^2}{24}-\frac{\gamma_4}{8}\;,\\
    \alpha_2 &:= \frac{\gamma_3}{2}\;,
  \end{align*}
  where $\gamma_j$ denotes the $j$th cumulant of $x$.
  For $k \in \{a,b,c\}$, denote by $\cE_k$ the single-die
  event $\sum_{i=1}^n k_i = 0$.
  We have the following expressions:
  \begin{align}
    \Pr\left[a_1>b_1\land a_2>b_2\mid\cE_0\right]
    &= \frac{1}{4} - \frac{2A^2}{n}
      + o(n^{-1}) \; ,
      \label{eq:79}\\
    \Pr\left[a_1>b_1\mid\cE_a\right]
    &= \frac{1}{2}+\frac{1}{4n}+\frac{\alpha_2A}{n}-\frac{B}{2n}+o(n^{-1})\;,
      \label{eq:79a}\\
    \Pr\left[a_1>b_1\land a_2>b_2\mid\cE_a\right]
    &=\frac{1}{4}+\frac{1}{4n}+\frac{\alpha_2A}{n}-\frac{B}{2n}-\frac{A^2}{n}
      +o(n^{-1})\;,
      \label{eq:79b}\\
    \Pr\left[a_1>b_1\land a_2>c_1\mid \cE_a\cap\cE_b\right]
    &=\frac{1}{4}+\frac{1}{8n}+\frac{\alpha_2A}{2n}-\frac{B}{4n}-\frac{A^2}{n}
      +o(n^{-1})\;.
      \label{eq:79c}
  \end{align}
  Furthermore:
  \begin{align}
    \Pr\left[a_1>b_1\land a_1>b_2\mid\cE_0\right]
    &=\frac{1}{3}+o(1)\;,
      \label{eq:78c}\\ 
    \Pr\left[a_1>b_1\land a_1>b_2\mid\cE_a\right]
    &=\frac{1}{3}+o(1)\;,
      \label{eq:78a}\\
    \Pr\left[a_1>b_1\land a_1>c_1\mid\cE_a\cap\cE_b\right]
    &=\frac{1}{3}+o(1)\;.
      \label{eq:78b}
  \end{align}
\end{lemma}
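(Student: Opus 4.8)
The plan is to reduce each of the seven estimates in Lemma~\ref{lem:double-cov} to an integral of a bounded functional of a fixed number of faces against an explicit ``tilted'' density, and then to finish by elementary calculus. Fix $j$ and let $g_m$ denote the density of a sum of $m$ i.i.d.\ copies of the face variable $x$; these densities exist, are continuous for $m\ge 1$ (as $f$ is continuous) and are bounded by $\|f\|_\infty$ because $f$ is bounded. A standard disintegration of the conditioning event $\cE_a=\{\sum_{i=1}^n a_i=0\}$ shows that the joint conditional density of $(a_1,\ldots,a_j)$ equals
\[
 f(a_1)\cdots f(a_j)\cdot\frac{g_{n-j}(-S_j)}{g_n(0)},\qquad S_j:=a_1+\cdots+a_j,
\]
the normalizer being exactly $g_n(0)$ by the convolution identity. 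Since $f$ is an integrable bounded density, its characteristic function tends to zero at infinity, so Cramér's condition holds, and with the twelve finite moments the Edgeworth expansion for densities (see e.g.~\cite{BR10}) gives, uniformly in $t$,
\[
 g_m(t)=\frac{1}{\sqrt m}\,\varphi\!\left(\frac{t}{\sqrt m}\right)\Bigl(1+\frac{q_1(t/\sqrt m)}{\sqrt m}+\frac{q_2(t/\sqrt m)}{m}\Bigr)+o\!\left(m^{-3/2}\right),
\]
where $\varphi$ is the standard normal density and $q_1,q_2$ are the usual polynomials in the cumulants $\gamma_3,\gamma_4$, with $q_1$ odd (so $q_1(0)=0$).

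The next step is to expand the ratio $g_{n-j}(-S_j)/g_n(0)$ to order $1/n$. Writing out the Gaussian factor $\varphi(-S_j/\sqrt{n-j})=\tfrac{1}{\sqrt{2\pi}}\bigl(1-\tfrac{S_j^2}{2n}+o(1/n)\bigr)$ on the bulk $|S_j|\le n^{1/4}$, the skewness factor $1+q_1(-S_j/\sqrt{n-j})/\sqrt{n-j}=1+\tfrac{\gamma_3 S_j}{2n}+o(1/n)$, the prefactor $\sqrt{n/(n-j)}=1+\tfrac{j}{2n}+O(n^{-2})$, and noting that $q_1(0)=0$ while the order-$1/n$ contributions of $q_2$ at the origin cancel between numerator and denominator, one obtains
\[
 \frac{g_{n-j}(-S_j)}{g_n(0)}=1+\frac{1}{n}\Bigl(\frac{j}{2}-\frac{S_j^2}{2}+\alpha_2 S_j\Bigr)+E_n(S_j),\qquad \alpha_2=\tfrac{\gamma_3}{2},
\]
where $E_n$ integrates against $f(a_1)\cdots f(a_j)$ to $o(1/n)$: on $|S_j|\le n^{1/4}$ each Taylor remainder is a polynomial in $S_j$ over $n^2$ with bounded coefficient, and on $|S_j|>n^{1/4}$ the ratio is $O(\sqrt n)$ by boundedness of $g_{n-j}$ while $\Pr[|S_j|>n^{1/4}]=O(n^{-3})$ by the twelve-moment Markov bound. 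The main term integrates to $1$ using $\EE[S_j]=0$ and $\EE[S_j^2]=j$, as it must; the $\gamma_4$-dependent constant $\alpha_1$ of the lemma appears among the Taylor remainders and cancels from the final answers. For the quantities conditioned on two independent events — the $c$-die, when present, is irrelevant, so $\cE_0$ reduces to $\cE_a\cap\cE_b$ — one multiplies the correction in the $a$-faces by the analogous one in the $b$-faces, integrating any face not belonging to a conditioned die against the plain density $f$.

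With this in hand each probability becomes an expectation of a product of $F$ and the partial moments $F_1(t):=\int_{-\infty}^{t}uf(u)\,du$, $F_2(t):=\int_{-\infty}^{t}u^2 f(u)\,du$ evaluated at the relevant faces, weighted by the tilted density. For instance $\Pr[a_1>b_1\wedge a_2>b_2\mid\cE_a]=\EE[F(a_1)F(a_2)\mid\cE_a]$, which gives \eqref{eq:79b} at once, and combining it with the $b$-correction gives \eqref{eq:79}. Everything then collapses using the elementary identities $\EE[F(x)]=\tfrac12$, $\EE[F(x)^2]=\tfrac13$ (since $F(x)\sim\mathrm{U}(0,1)$), $\EE[xF(x)]=A$, $\EE[x^2F(x)]=B$, and, by switching the order of integration, $\EE[F_1(x)]=-A$ and $\EE[F_2(x)]=1-B$; collecting the $1/n$ terms produces \eqref{eq:79}--\eqref{eq:79c}. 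The three estimates \eqref{eq:78c}--\eqref{eq:78b} need only the leading order: without any conditioning $\Pr[a_1>b_1\wedge a_1>b_2]=\EE[F(x)^2]=\tfrac13$, and the density correction, being $1+O(1/n)$, perturbs this only by $O(1/n)$.

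The main obstacle is making the error control uniform. After dividing the Edgeworth remainder by $g_n(0)\asymp n^{-1/2}$ and integrating over the half-lines $(-\infty,a_i)$ (and over all the faces) the total error must still be $o(1/n)$; this is exactly what boundedness of $f$ — hence of every $g_m$, hence the local limit theorem with a usable remainder — and the generous moment hypothesis — hence the truncation $|S_j|>n^{1/4}$ and the tails of the half-line integrals, controlled by dominated convergence — are there to supply. Once the tilted density is pinned down to order $1/n$, the remaining calculations are routine, if somewhat lengthy.
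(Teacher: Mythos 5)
Your proposal is correct and follows essentially the same route as the paper: disintegrate the conditioning to get the tilted density $f(a_1)\cdots f(a_j)\,g_{n-j}(-S_j)/g_n(0)$, expand $g_m$ via the Edgeworth expansion for densities (the paper uses Petrov's Theorem 15, which needs exactly the boundedness and moment hypotheses you invoke), replace the Gaussian factor by $1-S_j^2/(2n)$ on a bulk event and kill the tails by a moment bound, and finish with the same elementary integrals ($\EE[xF(x)]=A$, $\EE[x^2F(x)]=B$, $\EE[F_1]=-A$, $\EE[F_2]=1-B$). The only cosmetic difference is that you normalize by $g_n(0)$ exactly, so the $\gamma_4$-constant $\alpha_1$ cancels inside the ratio, whereas the paper computes the normalizer $C_n$ separately (Corollary~\ref{cor:normalization}) and cancels $\alpha_1$ there; the two bookkeeping schemes are equivalent.
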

Since these expressions might look intimidating, let us point out what
we think is one of the most important properties: In contrast to \eqref{eq:79}, it turns out that
\begin{align*}
  \Pr\left[a_1 > b_1 \land a_2 > c_1\mid\cE_0\right]
  = \frac{1}{4}-\frac{A^2}{n} + o(n^{-1}) \; .
\end{align*}
The fact that the errors of order $n^{-1}$ in those two expressions
differ by exactly a factor of two
turns out to imply that $W^{(ab)}+W^{(bc)}+W^{(ca)}$ has small variance,
which, together with anticoncentration argument for $W^{(ab)}$,
implies transitivity similarly as in the proof of Theorem~\ref{con:weak}.
Lemma~\ref{lem:double-cov} is more complicated since we are relating
random variables $W^{(ab)}$ and $V^{(ab)}$, but the $\frac{A^2}{n}$
terms are still crucial, with other terms canceling out one way
or another.

\paragraph{Proof of Proposition~\ref{prop:half-variance} assuming
  Lemma~\ref{lem:double-cov}}
We address each of the three terms in \eqref{eq:81} in turn. First,
using \eqref{eq:78c} and \eqref{eq:79},
\begin{align}
 & \Var[W\mid\cE_0]
  = \Var\left[ \sum_{i,j=1}^n W_{ij}\mid\cE_0 \right] \nonumber\\
  &= O(n^2) + 2n^2(n-1)\Cov\left[W_{11}, W_{12}\mid\cE_0\right]
    + n^2(n-1)^2\Cov\left[W_{11}, W_{22}\mid\cE_0\right] \nonumber\\
  &=O(n^2) + 2n^2(n-1)
    \left(\Pr[a_1 > b_1 \land a_1 > b_2 \mid \cE_0]-\frac{1}{4}\right)
  \nonumber\\
  &\qquad+n^2(n-1)^2\left(\Pr[a_1>b_1\land a_2>b_2\mid \cE_0]-\frac{1}{4}
    \right)\nonumber\\
  &= n^3\left(\frac{1}{6} - 2A^2\right) + o(n^{3}) \; .
    \label{eq:82}
\end{align}
Second, by \eqref{eq:78a}, \eqref{eq:79a} and \eqref{eq:79b},
\begin{align}
 & \Var\left[\sum_{i=1}^n F(a_i)-F(b_i)\mid\cE_0\right]
  = 2\Var\left[\sum_{i=1}^n F(a_i)\mid\cE_0\right]\nonumber\\
  &= 2n\Var[F(a_1)\mid\cE_0] + 2n(n-1)\Cov[F(a_1),F(a_2)\mid\cE_0]\nonumber\\
  &=2n\left(\EE\left[F(a_1)^2\mid\cE_0\right]
    -\EE\left[F(a_1)\mid\cE_0\right]^2\right)\nonumber\\
  &\qquad+2n(n-1)\left(\EE\left[F(a_1)F(a_2)\mid\cE_0\right]
    -\EE\left[F(a_1)\mid\cE_0\right]^2\right)\nonumber\\
  &=2n\left(\Pr\left[a_1>b_1\land a_1>b_2\mid\cE_a\right]
    -\Pr\left[a_1>b_1\mid\cE_a\right]^2\right)\nonumber\\
  &\qquad+2n(n-1)\left(\Pr\left[a_1>b_1\land a_2>b_2\mid\cE_a\right]
    -\Pr\left[a_1>b_1\mid\cE_a\right]^2
    \right)\nonumber\\
  &= n\left(\frac{1}{6} - 2A^2\right) + o(n) \; .
    \label{eq:83}
\end{align}
Finally, recalling $F_n$ is the conditional CDF of $a_1$ given $\cE_a$, and using \eqref{eq:78b}, \eqref{eq:79c} and \eqref{eq:79a} again, we have
\begin{align}
  &\sum_{i,j,k=1}^n \EE\left[\1(a_i>b_j)\left(F(a_k)-F(b_k) \right)\mid\cE_0\right]
    \nonumber\\
  &= \sum_{i,j,k=1}^n \EE\Big[
    F_n(a_i)F(a_k)-(1-F_n(b_j))F(b_k) \mid \cE_0
    \Big]\nonumber\\
  &=2n\sum_{i,j=1}^n\EE\left[F_n(a_i)F(a_j)\mid\cE_0\right]
    -n^2\sum_{i=1}^n\EE\left[F(a_i)\mid\cE_0\right]\nonumber\\
  & =
    2n^2\EE\left[F_n(a_1)F(a_1)\mid\cE_0\right]
    +2n^2(n-1)\EE\left[F_n(a_1)F(a_2)\mid\cE_0\right]
    -n^3\EE\left[F(a_1)\mid\cE_0\right]
    \nonumber\\
  &=
    2n^2\Pr\left[a_1>b_1\land a_1>c_1\mid\cE_a\cap\cE_b\right]
    +2n^2(n-1)\Pr\left[a_1>b_1\land a_2>c_1\mid\cE_a\cap\cE_b\right]
    \nonumber\\
  &\qquad\qquad-n^3\Pr\left[a_1>b_1\mid\cE_a\right]
    \nonumber\\
  & = n^2\left(\frac{1}{6}-2A^2\right) + o(n^{2}) \; .
    \label{eq:84}
\end{align}
Substituting \eqref{eq:82}, \eqref{eq:83} and \eqref{eq:84} into
\eqref{eq:81} gives
\begin{align*}
  \Var\left[W - n \sum_{i=1}^n F(a_i) - F(b_i) \right] = o(n^{3}) \; .
\end{align*}
\qed

It remains to prove Lemma~\ref{lem:double-cov}.

\paragraph{Integration lemma}
The technical part of the proof of Lemma~\ref{lem:double-cov}
consists of the following lemma that replaces the expressions
for $\phi_{n-2}$ and $\phi_{n-1}$ with an appropriate polynomial approximation.
Recall the constants $\alpha_1$ and $\alpha_2$ defined in the statement
of Lemma~\ref{lem:double-cov} and that we defined $\phi_{n-k}$ as the
PDF of $\sum_{i=1}^{n-k} a_i$ multiplied by $\sqrt{2\pi (n-k)}$.

\begin{lemma}\label{lem:technical}
  Let $D$ be a measurable set in $\mathbb{R}^4$ and write
  \[
  f(a,b,c,d) := f(a)f(b)f(c)f(d) \quad  \text{and} \quad f(a,b) := f(a)f(b).\]
  Setting
  $a := a_1+a_2$ and $b := b_1+b_2$
  and denoting Lebesgue integration over $da_1da_2db_1db_2$ by
  $dab$, we have
  \begin{align}
    &\iint_D f(a_1, a_2, b_1, b_2) \cdot \phi_{n-2}(-a)\phi_{n-2}(-b)
      \, dab \nonumber\\
    &\qquad
      = \iint_D f(a_1,a_2,b_1,b_2) \cdot \left(1 + \frac{2\alpha_1}{n} +
      \frac{\alpha_2(a+b)}{n} - \frac{a^2+b^2}{2n}\right) \, dab
        + o(n^{-1}) \; .
      \label{eq:93}
  \end{align}
  Furthermore, using similar notational conventions, we
  get, for $a := a_1$ and $b := b_1$ (and $D\subseteq\mathbb{R}^2$):
  \begin{align}
    &\iint_D f(a, b) \cdot \phi_{n-1}(-a)
      \, dab \nonumber\\
    &\qquad
      = \iint_D f(a, b) \cdot \left(1 + \frac{\alpha_1}{n} +
      \frac{\alpha_2a}{n} - \frac{a^2}{2n}\right) \, dab
        + o(n^{-1}) \; ;
      \label{eq:93a}
  \end{align}
  for $a := a_1+a_2$ and $b:=b_1+b_2$:
  \begin{align}
    &\iint_D f(a_1, a_2, b_1, b_2) \cdot \phi_{n-2}(-a)
      \, dab \nonumber\\
    &\qquad
      = \iint_D f(a_1,a_2,b_1,b_2) \cdot \left(1 + \frac{\alpha_1}{n} +
      \frac{\alpha_2 a}{n} - \frac{a^2}{2n}\right) \, dab
        + o(n^{-1}) \; ;
      \label{eq:93b}
  \end{align}
  and for $a := a_1+a_2$, $b := b_1$ and $c := c_1$:
  \begin{align}
    &\iint_D f(a_1, a_2, b, c) \cdot \phi_{n-2}(-a)\phi_{n-1}(-b)
      \, dabc \nonumber\\
    &\qquad
      = \iint_D f(a_1,a_2,b,c) \cdot \left(1 + \frac{2\alpha_1}{n} +
      \frac{\alpha_2(a+b)}{n} - \frac{a^2+b^2}{2n}\right)\,dabc
        + o(n^{-1}) \; .
      \label{eq:93c}
  \end{align}
\end{lemma}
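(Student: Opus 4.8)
The plan is to prove Lemma~\ref{lem:technical} by establishing a single master estimate: that the ``correction factor'' $\phi_{n-k}(-x)$ — the PDF of $\sum_{i=1}^{n-k}a_i$, rescaled by $\sqrt{2\pi(n-k)}$ — admits an Edgeworth-type expansion $\phi_{n-k}(-x) = 1 + \frac{1}{n}\bigl(\alpha_1 - \frac{x^2}{2} + \alpha_2 x\bigr) + (\text{error})$, valid pointwise for $x$ in a bounded region, where the error is $o(1/n)$ uniformly and, crucially, is dominated by an integrable majorant so that it may be integrated against $f(a_1)\cdots$ over the (possibly unbounded) domain $D$. The four displays~\eqref{eq:93}--\eqref{eq:93c} are then immediate: \eqref{eq:93a} is one factor $\phi_{n-1}(-a)$; \eqref{eq:93b} is $\phi_{n-2}(-a)$ where now $a=a_1+a_2$; \eqref{eq:93} and \eqref{eq:93c} are products of two such factors, and since each factor is $1+O(1/n)$, the cross term in the product is $O(1/n^2)$ and absorbed into the error, leaving the additive combination $1 + \frac{2\alpha_1}{n} + \frac{\alpha_2(a+b)}{n} - \frac{a^2+b^2}{2n}$.

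To get the pointwise expansion I would invoke the classical local central limit theorem with Edgeworth correction (e.g.\ from \cite{BR10}). Writing $S_{n-k} = \sum_{i=1}^{n-k} a_i$, which has mean zero and variance $n-k$, its density $p_{n-k}$ satisfies
\begin{align*}
  \sqrt{n-k}\, p_{n-k}\bigl(\sqrt{n-k}\,t\bigr)
  = \varphi(t)\Bigl(1 + \tfrac{\gamma_3}{6\sqrt{n-k}}H_3(t) + \tfrac{1}{n-k}q(t) + \cdots\Bigr),
\end{align*}
where $\varphi$ is the standard normal density, $H_3$ the Hermite polynomial, $q$ a fixed polynomial built from $\gamma_3,\gamma_4$, and the remainder is $o(1/n)$ uniformly in $t$ provided enough moments exist (the twelve moments in Theorem~\ref{con:weak} give ample room) and a Cramér-type regularity condition holds — here $f$ continuous on its support interval suffices to rule out a lattice obstruction. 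Substituting $t = x/\sqrt{n-k}$, expanding $\varphi(x/\sqrt{n-k}) = (2\pi(n-k))^{-1/2}e^{-x^2/(2(n-k))}$ and Taylor-expanding both the Gaussian exponent and the bracketed polynomial in powers of $x/\sqrt{n-k}$, then multiplying through by $\sqrt{2\pi(n-k)}$ and re-expressing $n-k$ in terms of $n$, collects exactly the terms $1 + \frac{1}{n}(\alpha_1 - x^2/2 + \alpha_2 x)$; matching coefficients pins down $\alpha_1 = \frac{5\gamma_3^2}{12} - \frac{\gamma_4}{8}$ and $\alpha_2 = \frac{\gamma_3}{2}$ as stated (the $\gamma_3$ cross-contributions from $H_3$ squared and from the $1/\sqrt{n}$ term feeding into the Gaussian expansion are what produce the $5\gamma_3^2/12$).

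The main obstacle is \emph{uniform integrability of the remainder over an unbounded domain}. The Edgeworth remainder bound $o(1/n)$ from \cite{BR10} is uniform in $t$, but when we integrate $p_{n-k}(-x)$ (un-rescaled) against $f(a_1)f(a_2)\cdots$ over $D$, the variable $x$ (which equals $a_1$ or $a_1+a_2$) ranges over all of $\R$, and for $x$ of order $\sqrt{n}$ the ``small'' correction $-x^2/(2n)$ is itself order $1$, so the naive expansion breaks down there. The fix is a two-region split: on $|x| \le n^{1/2-\eta}$ use the Edgeworth expansion with its uniform remainder and note $x^2/n \le n^{-2\eta}\to 0$ so the Taylor steps are legitimate with controlled error; on $|x| > n^{1/2-\eta}$ bound $\phi_{n-k}(-x) = \sqrt{2\pi n}\,p_{n-k}(-x)$ crudely (it is $O(\sqrt{n})$ times a density that, by a moment/Markov bound using the twelve finite moments, contributes $o(1/n)$ after integrating against the rapidly-decaying-in-expectation weight $f(a_1)\cdots$, since $\EE[|a_1|^{12}] < \infty$ forces the tail mass of $S_{n-k}$ beyond $\sqrt n\cdot n^{-\eta}$ to be negligible), and also the polynomial $1 + \frac{2\alpha_1}{n} + \cdots$ integrated against $f$ over that tail is $o(1/n)$ by the same moment control. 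Assembling the two regions gives~\eqref{eq:93}--\eqref{eq:93c}. A secondary technical point worth flagging is verifying the Cramér condition (or its lattice-free substitute) so that \cite{BR10} applies; continuity of $f$ on the support interval, assumed in Theorem~\ref{con:weak}, is exactly what makes the characteristic function of $a_1$ have modulus bounded away from $1$ outside neighborhoods of the origin, which is all that is needed.
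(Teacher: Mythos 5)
Your proposal is correct and follows essentially the same route as the paper: a uniform Edgeworth expansion for the density of the partial sums (the paper uses \cite[Theorem~15]{Petrov1975} where you invoke \cite{BR10}), a pointwise Taylor expansion of the Gaussian factor and the polynomial corrections with cross-terms of the two-factor products absorbed at order $n^{-2}$, and a bulk/tail split of the integration domain with the tail controlled by the finite moments (the paper cuts at $a^2+b^2<n^{1/3}$ and uses $|e^{-y}-1+y|=O(\min(y,y^2))$). The one point worth tightening is the regularity hypothesis: for the \emph{density}-level expansion one needs boundedness of (a convolution power of) $f$ rather than the Cram\'er condition alone, but this is supplied by the assumptions of Theorem~\ref{con:weak}, so there is no gap.
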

We state all formulas that we need explicitly
in order to avoid defining and handling new
notation, but we point out the pattern in these expressions: the $\alpha_1/n$
factor is multiplied by the number of the densities in the expression,
the $\alpha_2/n$ factor is multiplied by the sum of all variables
featured in the densities and the quadratic factor is consistent with
the approximation~\eqref{eq:85}.

  Before proving the lemma we point out a corollary that follows by
  setting $D$ to the full integration space and some simple integration
  (keeping in mind $\EE[a_1] = 0$ and $\EE[a_1^2] = 1$).
  The corollary allows us to estimate the normalization constants
  $C_n$ and $C'_n$ (see~\eqref{eq:102}).
  
  \begin{corollary}\label{cor:normalization}
    Keeping the notation from Lemma~\ref{lem:technical}, we have
    \begin{align*}
      \iint_{\mathbb{R}^4}
      f(a_1,a_2,b_1,b_2) \cdot \phi_{n-2}(-a)\phi_{n-2}(-b) \, dab
      &= 1 + \frac{2\alpha_1}{n} - \frac{2}{n} + o(n^{-1})\;,\\
      \iint_{\mathbb{R}^2}
      f(a,b) \cdot \phi_{n-1}(-a) \, dab
      &= 1 + \frac{\alpha_1}{n} - \frac{1}{2n} + o(n^{-1})\;,\\
      \iint_{\mathbb{R}^4}
      f(a_1,a_2,b_1,b_2) \cdot \phi_{n-2}(-a) \, dab
      &= 1 + \frac{\alpha_1}{n} - \frac{1}{n} + o(n^{-1})\;,\\
      \iint_{\mathbb{R}^4}
      f(a_1,a_2,b,c) \cdot \phi_{n-2}(-a)\phi_{n-1}(-b) \, dabc
      &= 1 + \frac{2\alpha_1}{n} - \frac{3}{2n} + o(n^{-1})\;.
    \end{align*}
    Consequently, letting
    $D=\{(a_1,a_2,b_1,b_2):a_1>b_1\land a_2>b_2\}$, we have
    \begin{align}
      & \Pr\left[a_1>b_1\land a_2>b_2\mid\cE_0\right]
        \nonumber\\
      &\qquad=\frac
        {\iint_Df(a_1,a_2,b_1,b_2)\phi_{n-2}(-a)\phi_{n-2}(-b)\,dab}
        {\iint_{\mathbb{R}^4}f(a_1,a_2,b_1,b_2)\phi_{n-2}(-a)\phi_{n-2}(-b)\,dab}\nonumber\\
      &\qquad=\left(1-\frac{2\alpha_1}{n}+\frac{2}{n}\right)
        \iint_D     f(a_1,a_2,b_1,b_2)
        \Bigg(1+\frac{2\alpha_1}{n}+\frac{\alpha_2(a+b)}{n} \notag \\
        &\qquad\qquad\qquad\qquad\qquad\qquad \qquad\qquad\qquad\qquad \qquad -\frac{a^2+b^2}{2n}
        \Bigg)\,dab   +o(n^{-1})   \nonumber\\
      & \qquad=\left(1+\frac{2}{n}\right)
        \iint_Df(a_1,a_2,b_1,b_2)
        \Bigg(1+\frac{2\alpha_2(a_1+b_1)}{n}  \notag \\
        &\qquad\qquad\qquad\qquad\qquad  \quad     -\frac{a_1^2+b_1^2+a_1a_2+b_1b_2}{n}   \Bigg) \,dab +o(n^{-1})\;.     \label{eq:95}
    \end{align}
    Similarly, we have
    \begin{align}
      &\Pr\left[a_1>b_1\mid\cE_a\right]
        \nonumber\\
      &=\left(1-\frac{\alpha_1}{n}+\frac{1}{2n}\right)
        \iint_Df(a,b)
        \Big(1+\frac{\alpha_1}{n}+\frac{\alpha_2a}{n}-\frac{a^2}{2n}
        \Big)dab
        +o(n^{-1})\;,   \nonumber\\
      &=\left(1+\frac{1}{2n}\right)
        \iint_Df(a_1,b_1)
        \Big(1+\frac{\alpha_2a_1}{n}-\frac{a_1^2}{2n}
        \Big)dab
        +o(n^{-1})\;,   \label{eq:95a}
        \end{align}
 where $ D=\{(a_1,b_1):a_1>b_1\}$;
 \begin{align}
      &\Pr\left[a_1>b_1\land a_2>b_2\mid\cE_a\right]
        \nonumber\\
      & =\Big(1-\frac{\alpha_1}{n}+\frac{1}{n}\Big)
        \iint_Df(a_1,a_2,b_1,b_2)
        \left(1+\frac{\alpha_1}{n}+\frac{\alpha_2a}{n}-\frac{a^2}{2n}
        \right)\,dab
        +o(n^{-1})
        \nonumber\\
      & =\left(1+\frac{1}{n}\right)
        \iint_Df(a_1,a_2,b_1,b_2)
        \left(1+\frac{2\alpha_2a_1}{n}-\frac{a_1^2+a_1a_2}{n}
        \right) \,dab +o(n^{-1})\;,
        \label{eq:95b}
\end{align}
   where $ D=\{(a_1,a_2,b_1,b_2):a_1>b_1\land a_2>b_2\}$;
\begin{align}      &\Pr\left[a_1>b_1\land a_2>c_1\mid\cE_a\cap\cE_b\right]
        \nonumber\\
      &\qquad=\left(1-\frac{2\alpha_1}{n}+\frac{3}{2n}\right)
        \iint_Df(a_1,a_2,b,c)
        \Bigg(1+\frac{2\alpha_1}{n}+\frac{\alpha_2(a+b)}{n} \notag \\
        &\qquad \qquad\qquad\qquad\qquad\qquad \qquad\qquad -\frac{a^2+b^2}{2n}  \Bigg)\,dabc
        +o(n^{-1})
        \nonumber\\
      &\qquad=\left(1+\frac{3}{2n}\right)
        \iint_Df(a_1,a_2,b_1,c_1)
        \Bigg(1+\frac{\alpha_2(2a_1+b_1)}{n}  \notag \\
        &\qquad \qquad\qquad\qquad\qquad\qquad-\frac{2a_1^2+b_1^2+2a_1a_2}{2n}   \Bigg) \,dabc +o(n^{-1})\;,
        \label{eq:95c}
\end{align}
where $ D=\{(a_1,a_2,b_1,c_1):a_1>b_1\land a_2>c_1\}$.
\end{corollary}

We point out that an important feature of the expressions
\eqref{eq:95}--\eqref{eq:95c} is that the number of mixed $a_1a_2$ and $b_1b_2$
terms depends on the number of $\phi_{n-2}$ densities in the expression.

\paragraph{Proof of Lemma~\ref{lem:double-cov} assuming
  Lemma~\ref{lem:technical}}
We delay the proof of Lemma~\ref{lem:technical} and prove
Lemma~\ref{lem:double-cov} now. For this we need some elementary integral
computations. First, in the case with two variables $a$, $b$ and
$D_2 := \{(a,b):a>b\}$:
\begin{align}
  &\iint_{D_2} f(a,b)\,dab = \frac{1}{2}\;,
  \nonumber\\
  &\iint_{D_2}f(a,b)\cdot a\,dab
    =\int_{-\infty}^{+\infty}af(a)\int_{-\infty}^af(b)\,dbda
    =\EE\left[a\cdot F(a)\right] = A\;,
  \label{eq:119}\\
  &\iint_{D_2}f(a,b)\cdot a^2\,dab
    =\int_{-\infty}^{+\infty}a^2f(a)\int_{-\infty}^af(b)\,dbda
    =\EE\left[a^2\cdot F(a)\right]=B\;.
    \nonumber
\end{align}
In the four-variable case with
$D := \{(a_1,a_2,b_1,b_2):a_1>b_1 \land a_2>b_2\}$,
$f := f(a_1,a_2,b_1,b_2)$
and $dab = da_1da_2db_1db_2$:
\begin{align}
  &\iint_D f
    \, dab = \frac{1}{4} \; ,
  \nonumber\\
  &\iint_D f \cdot a_1 \, dab =
    \frac{1}{2}\iint_{D_2}f(a_1,b_1)\cdot a_1 \, dab=\frac{A}{2}\;,
  \nonumber\\
  &\iint_D f \cdot b_1 \, dab =
    \frac{1}{2}\int_{-\infty}^{+\infty} b_1f(b_1) \int_{a_1}^{+\infty} f(a_1)
    \, da_1db_1
  \nonumber\\
  &\qquad \qquad\qquad 
    = \frac{1}{2}\int_{-\infty}^{+\infty} b_1f(b_1)(1-F(b_1)) \, db_1
    = \frac{\EE[b_1]-\EE[b_1\cdot F(b_1)]}{2} = -\frac{A}{2} \; ,
  \label{eq:120}\\
  &\iint_D f \cdot a_1^2
    \, dab =
    \frac{1}{2} \iint_{D_2} f(a_1,b_1) \cdot a_1^2 \,dab
    = \frac{B}{2} \; ,
  \nonumber\\
  &\iint_D f \cdot b_1^2
    \, dab =
    \frac{1}{2} \int_{-\infty}^{+\infty} b_1^2f(b_1) \int_{b_1}^{+\infty}
    f(a_1)
    \, da_1db_1
    = \frac{\EE[b_1^2] - \EE[b_1^2\cdot F(b_1)]}{2}
  \nonumber\\
  &\qquad\qquad\qquad = \frac{1-B}{2} \; ,
  \nonumber\\
  &\iint_D f \cdot a_1a_2
    \, dab =
    \left(\iint_{D_2} f(a_1,b_1) \cdot a_1\,dab\right)^2
    =A^2\;,
  \nonumber\\
  &\iint_D f \cdot b_1b_2
      \, dab =
      \left( \int_{-\infty}^{+\infty} b_1f(b_1) \int_{b_1}^{+\infty}
      f(a_1)
      \, da_1db_1 \right)^2
    =\EE[b_1(1-F(b_1)]^2
  \nonumber\\
  &\qquad\qquad\qquad\quad= A^2 .
    \nonumber
\end{align}
Now all that is left is to insert the expressions computed above
into equations \eqref{eq:95}--\eqref{eq:95c} in
Corollary~\ref{cor:normalization}.
For example, in case of \eqref{eq:95c} we get
\begin{align}
  &\Pr\left[a_1>b_1\land a_2>c_1\mid\cE_a\cap\cE_b\right]
  \nonumber\\
  &\qquad=\left(1+\frac{3}{2n}\right)
    \iint_D f(a_1,a_2,b_1,c_1)
    \Bigg(1+\frac{\alpha_2(2a_1+b_1)}{n}
  \nonumber\\
   &\qquad \qquad\qquad\qquad\qquad\qquad\qquad -\frac{2a_1^2+b_1^2+2a_1a_2}{2n}\Bigg)
     \,dabc + o(n^{-1})
  \nonumber\\
  &\qquad=\left(1+\frac{3}{2n}\right)
    \left(\frac{1}{4}+\frac{\alpha_2 A}{n}-\frac{\alpha_2 A}{2n}
    -\frac{B}{2n}-\frac{1-B}{4n}-\frac{A^2}{n}
    \right)+o(n^{-1})
  \nonumber\\
  &\qquad=\frac{1}{4}+\frac{1}{8n}+\frac{\alpha_2 A}{2n}-\frac{B}{4n}
    -\frac{A^2}{n} + o(n^{-1}) \; ,
    \label{eq:118}
\end{align}
which is exactly~\eqref{eq:79c} that we wanted to prove.
Equations~\eqref{eq:79}--\eqref{eq:79b}
and~\eqref{eq:78c}--\eqref{eq:78b}
are handled in analogous ways 
and we provide the explicit computations only in the
\hyperref[sec:appendix]{Appendix}.
\qed

\paragraph{Proof of Lemma~\ref{lem:technical}}
Finally, we turn to Lemma~\ref{lem:technical}.
Let $\tilde \phi_j$ denote the density of $j^{-1/2}\sum_{i=1}^j a_i$.
Since the density of $\sum_{i=1}^k a_i$ is bounded for all $k$
(recall that $a_i$ has density continuous on closed support),
\cite[Theorem~15, pp.~206-7]{Petrov1975} implies
\ben{\label{eq:exp1}
  \tilde \phi_j(y)=\frac{1}{\sqrt{2\pi}} e^{-y^2/2} \left( 1 + \frac{\frac{\gamma_3}{3!}H_3(y)}{\sqrt{j}}-\frac{\frac{1}{2}\bclr{\frac{\gamma_3}{3!}}^2 H_6(y) + \frac{\gamma_4}{4!}H_4(y)}{j}\right) + \lito(j^{-1}),
}
where $\gamma_j$ denotes the $j$th cumulant, the error is uniform in $y\in\IR$, and the $H_j$ are Hermite polynomials:
\besn{\label{eq:herm}
H_3(y)&=y^3-3y, \\
H_4(y)&=y^4-6y+3, \\
H_6(y)&=y^6-15y^4+45 y^2-15.
}
Since we defined $\phi_j(x)=\sqrt{2\pi} \tilde \phi_j(x j^{-1/2})$,~\eq{eq:exp1} implies
\begin{align}
\phi_j(x) 
&= e^{-x^2/(2j)} 
  \left( 1 + \frac{\frac{\gamma_3}{3!}H_3(x j^{-1/2})}{\sqrt{j}}-\frac{\frac{1}{2}\bclr{\frac{\gamma_3}{3!}}^2 H_6(x j^{-1/2}) + \frac{\gamma_4}{4!}H_4(x j^{-1/2})}{j}\right) \notag \\
 &\qquad  +\lito(j^{-1})
  \nonumber\\
&=e^{-x^2/(2j)}
\left(1 + \frac{\alpha_1}{j} - \frac{\alpha_2 x}{j}\right)
+O\left(\frac{\max(|x|,x^6)}{j^{3/2}}\right)
  +o(j^{-1}) \; ,
  \label{eq:101}
\end{align}
where in the last line the additional remainder term comes from writing out the
Hermite polynomials~\eq{eq:herm} and then noting that what is left out of the
main term has smallest order terms $j^{3/2}$ in the denominator,
and largest order terms in the numerator $x$ or $x^6$, depending on
$|x|\leq 1$ or $|x|>1$.
Substituting this into the left-hand side of~\eqref{eq:93} and using the fact
that the sixth moment is finite, we get (letting $f := f(a_1,a_2,b_1,b_2)$)
\begin{align}
  &\iint_D f \cdot \phi_{n-2}(-a)\phi_{n-2}(-b) \, dab\nonumber\\
  &\qquad=\iint_D f\cdot
    \Bigg[\exp\left(-\frac{a^2}{2(n-2)}\right)
    \Big(1+\frac{\alpha_1}{n-2}+\frac{\alpha_2 a}{n-2}\nonumber\\
  &\qquad\qquad\qquad\qquad\qquad\qquad
    +O\left(\frac{\max(|a|,a^6)}{n^{3/2}}\right)+o(n^{-1})
    \Big)\Bigg]\nonumber\\
  &\qquad\qquad\qquad\cdot
    \Bigg[\exp\left(-\frac{b^2}{2(n-2)}\right)
    \Big(1+\frac{\alpha_1}{n-2}+\frac{\alpha_2 b}{n-2}\nonumber\\
  &\qquad\qquad\qquad\qquad\qquad\qquad 
    +O\left(\frac{\max(|b|,b^6)}{n^{3/2}}\right)+o(n^{-1})
    \Big)\Bigg]\,dab
    \nonumber\\
  &\qquad=\iint_D f \cdot \exp\left(-\frac{a^2+b^2}{2(n-2)}\right)
    \left(1+\frac{2\alpha_1}{n}+\frac{\alpha_2(a+b)}{n}\right)
    \, dab + o(n^{-1})\nonumber\\
  &\qquad=\iint_D f \cdot \left(1-\frac{a^2+b^2}{2(n-2)}
    +O\left(\min\left(
    \frac{a^2+b^2}{n},\frac{(a^2+b^2)^2}{n^2}
    \right)\right)\right)\nonumber\\
  &\qquad\qquad\qquad\cdot
    \left(1+\frac{2\alpha_1}{n}+\frac{\alpha_2(a+b)}{n}\right)
    \, dab + o(n^{-1})\nonumber\\
  &\qquad=\iint_D f\cdot
    \Bigg[1+\frac{2\alpha_1}{n}+\frac{\alpha_2(a+b)}{n}
    -\frac{a^2+b^2}{2n}\nonumber\\
  &\qquad\qquad\qquad
    +O\left(\min\left(
    \frac{a^2+b^2}{n},\frac{(a^2+b^2)^2}{n^2}
    \right)\right)
    \Bigg]\,dab+o(n^{-1})\;,
  \label{eq:99}
\end{align}
where we used the approximation $\exp(-x)=1-x+O(\max(x,x^2))$ for $x\ge 0$.

Inspecting~\eqref{eq:99}, we see that all that is left to establish \eqref{eq:93}
is to show
\begin{align}\label{eq:121}
  \iint_{\mathbb{R}^4} f\cdot\left(\min\left(
  \frac{a^2+b^2}{n},\frac{(a^2+b^2)^2}{n^2}
  \right)\right)\, dab = o(n^{-1})\;.
\end{align}
We do that by dividing the integration area into
two parts: 
\begin{center}$D_1 := \{(a_1, a_2, b_1, b_2): a^2+b^2 < n^{1/3}\}$
and $D_2 := \mathbb{R}^4 \setminus D_1$,\end{center}
and computing
\begin{align*}
  &
  \iint_{\mathbb{R}^4} f\cdot\left(\min\left(
  \frac{a^2+b^2}{n},\frac{(a^2+b^2)^2}{n^2}
    \right)\right)\, dab\\
  &\qquad\le
    \iint_{D_1} f \cdot    
    \left(\frac{(a^2+b^2)^2}{n^2}\right) \, dab
    +\iint_{D_2} f\cdot\left(\frac{a^2+b^2}{n}\right)\, dab
   =O(n^{-4/3})\;,
\end{align*}
where in the inequality we bound the minimum by one of the terms,
and then use the fact that small moments of $a$ and $b$ are finite,
and that on~$D_2$, we have $1 \leq (a^2+b^2)n^{-1/3}$, and therefore
$\frac{a^2+b^2}{n}\le\frac{(a^2+b^2)^2}{n^{4/3}}$.
Therefore, we have shown \eqref{eq:93}.
Similar calculations
concerning \eqref{eq:93a}--\eqref{eq:93c} are skipped here and provided
in the \hyperref[sec:appendix]{Appendix}.
Note that we always need at most sixth finite moment when estimating~\eqref{eq:101}.
\qed

\section{Stationary Gaussian dice}
\label{sec:stationary}

\subsection{Preparation}

Before we state and prove our results,
let us start with some useful facts about Gaussian Hilbert spaces.
It is a well-known fact that the Hermite polynomials $\{ H_k, k\geq 0\}$ are \emph{orthogonal} polynomials with respect to the standard Gaussian measure $\gamma(A) = \int_A \phi(x)\, dx$, for any Borel set  $A\subset\R$. Here $\phi$ is the standard Gaussian density function and $H_k$ can be defined via {\it Rodrigues' formula}:
  $
  H_k(x)  = (-1)^k \phi(x)^{-1} \frac{d^k}{dx^k}(\phi(x))$.
For any $f\in L^2(\R, \gamma)$, we have 
\[
f = \sum_{q\geq 0} \text{coef}(q) H_q \quad \text{with     $\text{coef}(q)  : =  \frac{1}{q!} \int_\R  H_q(x)f(x) \, \gamma(dx)  $} \,,
\]
 where the above series converges in $L^2(\R, \gamma)$; see \cite[Section 1.4]{bluebook}. In our work, we only need \eqref{ind_chaos} and \eqref{Phi-exp}.  We can find the expansion \eqref{ind_chaos}, for instance, in \cite[page 7]{MarWig2011}.  Suppose $Z\sim N(0,1)$, noting that $\E\big[ ( \1[Z>0] - 2^{-1})^2 \big] = 1/4$, we deduce from the orthogonality relation of Hermite polynomials that 
\begin{align} 
\frac{1}{4} = \sum_{k\geq 0} d_{2k+1}^2 (2k+1)!   \,, \label{dueTo}
\end{align}
from which together with the explicit expression of $d_{2k+1}$'s,  we can deduce one of \emph{Srinivasa  Ramanujan}'s ingenious identities (in a different form):
\begin{align}\label{c2k}
\pi  =  \sum_{k\geq 0 } \frac{1}{2^{2k-1}(2k+1)  } {\binom{2k}{k}} \,.
\end{align}
Ramanujan's identity reads as follows:
$$
\frac{\pi}{2} = 1 + \frac{1}{2} \left(\frac{1}{3} \right) + \frac{1\cdot 3}{2\cdot 4} \left(\frac{1}{5} \right) + \frac{1\cdot 3 \cdot 5}{2\cdot 4 \cdot 6} \left(\frac{1}{7}\right)  + \cdots\,;
$$ see \cite{Rama1912}.     

To obtain~\eqref{Phi-exp}, note that  $\Phi(x) = \E\big( \1[ - Z <  x] \big)$, then using the  expansion \eqref{ind_chaos}, we get
\begin{align*}
 \Phi(x) & = \E\big( \1[ Z/\sqrt{2} + x/\sqrt{2} > 0] \big) = \frac{1}{2} + \E\left(   \sum_{q\geq 0} d_{2q+1} H_{2q+1}\big( Z/\sqrt{2} + x/\sqrt{2} \big) \right) \,, \\
 & = \frac{1}{2} + \E\left(   \sum_{q\geq 0} d_{2q+1}   \sum_{k=0}^{2q+1} {\binom{2q+1}{k}} 2^{-q-\frac{1}{2}} H_k(Z) H_{2q+1-k}(x)                 \right)
\end{align*}
 where  we deduce the last equality from  the    well-known identity: for $a,b\in\R$ satisfying $a^2 + b^2 = 1$,
$
H_n(ax+by) = \sum_{k=0}^n {n \choose k} a^k b^{n-k} H_k(x) H_{n-k}(y)
$.
 Note that $\E[ H_k(Z) \big] = 0$ for any $k\geq 1$ and $\E[ H_0(Z)] = 1$.  Therefore, the expansion \eqref{Phi-exp} is established.

 \begin{remark} \label{obs11}  Newton's 1676 identity reads as follows: (see \cite[Page 228]{ArHae})
   $$ \frac{\pi}{6} = \arcsin(1/2) = \frac{1}{2} + \frac{1}{2}\cdot \frac{1}{3 \cdot 2^3} +  \frac{1\cdot 3}{2\cdot 4} \cdot\frac{1}{5 \cdot 2^5} +    \frac{1\cdot 3\cdot 5}{2\cdot 4\cdot 6} \cdot\frac{1}{7 \cdot 2^7} +\cdots  ~ ,$$
   which is equivalent to 
   \begin{align}\label{d2k}
     \pi = \sum_{q=0}^\infty \frac{3}{(2q+1) 2^{4q}} {2q \choose q}  \,.
   \end{align} 
   Using the explicit expression \eqref{Phi-exp} for $\ell_{2q+1}$
   and noting that $\Phi(G)$ for standard Gaussian $G$ has distribution
   that is uniform in $(0,1)$,
 we easily check that
 \begin{align}
   \label{eq:78}
   \frac{1}{6} = \sum_{q=0}^\infty (2q+1)! 2^{-2q} d_{2q+1}^2 \; ,
 \end{align}
 from which we have $\alpha = \frac{1}{6} - \frac{1}{2\pi} =   \sum_{q=1}^\infty (2q+1)! 2^{-2q} d_{2q+1}^2$.
 \end{remark}

\begin{lemma}\label{betan}    Suppose $X, Y$ are two centered (jointly) Gaussian random variables with mean zero and variance one such that $\E[ XY] = \rho $. Let $\Phi$ be the CDF of $X$, then, 
\begin{align*}
 \E\big[ \Phi(X) \Phi(Y) \big]  =  \frac{1}{4} +  \sum_{q\geq 0}  \ell_{2q+1}^2 (2q+1)! \rho^{2q+1}  
 =  \frac{1}{4}  + \frac{\rho}{4\pi} + O(\rho^3)
\end{align*}
where $\ell_{2q+1} = d_{2q+1} 2^{-q-\frac{1}{2}} = \dfrac{(-1)^q}{\sqrt{\pi}   (2q+1) 2^{2q+1} q!}$ for each integer $q\geq 0$.
\end{lemma}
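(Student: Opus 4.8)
The plan is to expand $\Phi$ in the Hermite basis, as in \eqref{Phi-exp} (note $X\sim N(0,1)$, so $\Phi$ here is exactly the standard normal CDF), and then use the classical product formula for Hermite polynomials at jointly Gaussian arguments: if $(X,Y)$ is a centered Gaussian pair with unit variances and $\E[XY]=\rho$, then $\E\big[H_m(X)H_n(Y)\big] = \delta_{m,n}\,n!\,\rho^n$ for all $m,n\ge 0$ (a standard identity, see e.g.\ \cite{bluebook}, which follows from Mehler's formula or from the eigenfunction property of the Ornstein--Uhlenbeck semigroup). Writing $\Phi = \tfrac12 + \sum_{k\ge0}\ell_{2k+1}H_{2k+1}$ in both arguments and multiplying out, the cross terms $\tfrac12\E[H_{2k+1}(X)]$ and $\tfrac12\E[H_{2k+1}(Y)]$ vanish (as $\E[H_j]=0$ for $j\ge 1$), and the diagonal terms leave
\begin{align*}
  \E\big[\Phi(X)\Phi(Y)\big] = \frac14 + \sum_{q\ge0}\ell_{2q+1}^2(2q+1)!\,\rho^{2q+1}.
\end{align*}

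The only step needing genuine care --- and the main (modest) obstacle --- is justifying that the infinite sum may be pulled out of the expectation. For this I would work with the partial sums $S_N := \tfrac12 + \sum_{k=0}^N \ell_{2k+1}H_{2k+1}$, which satisfy $S_N(X)\to\Phi(X)$ and $S_N(Y)\to\Phi(Y)$ in $L^2$ by \eqref{Phi-exp}, together with the uniform bound $\|S_N(X)\|_{L^2}^2 = \tfrac14 + \sum_{k=0}^N\ell_{2k+1}^2(2k+1)! \le \E[\Phi(X)^2] = \tfrac13$ (and the same for $Y$). A two-term Cauchy--Schwarz estimate then shows $\E[S_N(X)S_N(Y)]\to\E[\Phi(X)\Phi(Y)]$, while the left-hand side is exactly $\tfrac14 + \sum_{k=0}^N\ell_{2k+1}^2(2k+1)!\,\rho^{2k+1}$ by the finite computation above; this yields both the series representation and its convergence, uniformly for $\rho\in[-1,1]$ (the degenerate cases $\rho=\pm1$ being automatically covered).

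Finally, for the expansion $\tfrac14 + \tfrac{\rho}{4\pi} + O(\rho^3)$ I would isolate the $q=0$ term. From $\ell_{2q+1}=d_{2q+1}2^{-q-1/2}$ and the formula for $d_{2q+1}$ in \eqref{ind_chaos} one gets $\ell_{2q+1} = (-1)^q/\big(\sqrt\pi\,(2q+1)2^{2q+1}q!\big)$, so in particular $\ell_1 = 1/(2\sqrt\pi)$ and the $q=0$ term equals $\ell_1^2\rho = \rho/(4\pi)$. The remaining tail $\sum_{q\ge1}\ell_{2q+1}^2(2q+1)!\,\rho^{2q+1}$ is bounded in absolute value by $|\rho|^3\sum_{q\ge1}\ell_{2q+1}^2(2q+1)!$, and this last sum is finite --- indeed it equals $\tfrac13 - \tfrac14 - \tfrac1{4\pi} = \tfrac1{12}-\tfrac1{4\pi}$, using $\E[\Phi(X)^2]=\tfrac13$ (equivalently \eqref{eq:78}) --- so the tail is $O(\rho^3)$, which finishes the argument.
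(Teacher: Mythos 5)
Your proof is correct and follows essentially the same route as the paper: expand $\Phi$ in the Hermite basis \eqref{Phi-exp}, apply the covariance identity $\E[H_m(X)H_n(Y)]=\delta_{mn}\,n!\,\rho^{n}$ from \eqref{prop221}, and isolate the $q=0$ term. The only (harmless) differences are that you justify the interchange of sum and expectation explicitly via $L^2$ partial sums, and you bound the tail by $|\rho|^3\sum_{q\ge1}\ell_{2q+1}^2(2q+1)!=|\rho|^3(\tfrac1{12}-\tfrac1{4\pi})$ using $\E[\Phi(X)^2]=\tfrac13$, whereas the paper invokes Newton's identity \eqref{d2k} for the same finiteness.
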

\begin{proof}
Recall from \eqref{Phi-exp}      the expansion
  $
 \Phi = \frac{1}{2} + \sum_{q\geq 0} \ell_{2q+1} H_{2q+1} $. It is also known (see \emph{e.g.} Proposition 2.2.1 in \cite{bluebook}) that for $X,Y\sim N(0,1)$ jointly Gaussian and any integers $m,n\geq 0$,
\begin{align}\label{prop221}
\E\big[ H_m(X) H_n(Y) \big] = m! \big( \E[ XY] \big)^m ~\delta_{mn} \;.
\end{align}
Therefore,
\begin{align*}
& \E\big[ \Phi(X) \Phi(Y) \big] \\
 & = \frac{1}{4} +  \sum_{q\geq 0} \ell_{2q+1}^2 \E\big[ H_{2q+1}(X) H_{2q+1}(Y) \big] = \frac{1}{4} +  \sum_{q\geq 0}  \ell_{2q+1}^2 (2q+1)! \rho^{2q+1} \\
 & = \frac{1}{4}  + \frac{\rho}{4\pi}  + \frac{1}{\pi} \sum_{q\geq 1}   \frac{1}{ (2q+1) 2^{4q+2}  } {2q\choose q}   \rho^{2q+1}  \\
 &=  \frac{1}{4}  + \frac{\rho}{4\pi}   + O(\rho^3) \,,
\end{align*}
where the last big-$O$ estimate follows from  the Newton's identity \eqref{d2k}.
\end{proof}



\subsection{Our results}

Now we are in a position to present our results for  stationary Gaussian dice.
Recall from the introduction that   $\{G_i, i\in\mathbb{N}\}$ is a centered stationary Gaussian sequence with the correlation function $\rho$ such that $\rho(0)=1/2$.  Let
{\it$\bfa$}, {\it$\bfb$}, {\it$\bfc$} be
i.i.d.~copies of $\{G_1, \ldots, G_n\}$, then  for $i,j,k,\ell\in[n]$, $(a_i - b_j,   a_k - b_\ell)$ is centered bivariate Gaussian with $\Var\big( a_i - b_j \big) = \Var\big( a_k - b_\ell \big)=1$ and $\E\big[ (a_i - b_j )(a_k - b_\ell )\big] = \rho(i-k) + \rho(j-\ell)$.  Therefore,   we can compute the variance of $  W^{(ab)} : = \sum_{i,j\in[n]} \1[ a_i > b_j]$ using the expansion \eqref{ind_chaos} and the relation \eqref{prop221}:
\begin{align}
  \Var\left(   W^{(ab)} \right)
  &= \sum_{i,j,k,\ell\in[n]} \Big\{
    \E\big(\1[ a_i > b_j \land a_k > b_\ell] \big) - \frac{1}{4} \Big\} \notag \\
& = \sum_{i,j,k,\ell\in[n]} ~ \sum_{q\geq 0}d_{2q+1}^2 (2q+1)! \big( \rho(i-k) + \rho(j-\ell) \big)^{2q+1}  \notag \\
& = \sum_{q\geq 0}d_{2q+1}^2 (2q+1)!  \sum_{i,j,k,\ell\in[n]}  \big( \rho(i-k) + \rho(j-\ell) \big)^{2q+1}  \,. \label{VARH}
          \end{align} 
Let us first look at the almost trivial case where $\rho = s_{1/2}$, that is, when $\rho(i-k)  =\frac{1}{2} \delta_{ik}$.  In this case, we have by \eqref{eq:78},
\begin{align}
\Var\left(    W^{(ab)}  \right)   =   \left(\sum_{q\geq 0}d_{2q+1}^2 (2q+1)! \frac{1}{2^{2q}}\right) n^3 + O(n^2) = \frac{1}{6}n^3 +  O(n^2) \,.  \label{28hou}
 \end{align}
 Then,  by standard   computations and the above variance estimate, we have
 \[
 \Var\left( W^{(ab)}  - n  \sum_{i\in[n]} \big[ F(a_i) -  F(b_i)\big] \right) = O(n^2),
 \]
while due to the classical CLT,  $n^{-1/2}  \sum_{i\in[n]} \big[ F(a_i) -  F(b_i)\big] $ converges in law to $N(0, 1/6)$. Therefore, we can conclude that the CDF-ordering property \eqref{corderp} occurs with high probability in this setting.   This relation also implies the following more general result.

\begin{theorem} \label{thm41} Let $\mathbf{x} = (x_1, \ldots, x_n)$ be a sequence of i.i.d   random variables such that $x_1$ has a density function with a support
  which is a countable collection of (possibly infinite) intervals. 
 Assume $\mathbf{y}$  and $\mathbf{z}$ are two  i.i.d.~copies of $\mathbf{x}$, then with high probability, 
\[
\text{$\mathbf{x}$ beats $\mathbf{y}$ if and only if } \quad \sum_{i=1}^n \mathcal{F}(x_i) >  \sum_{i=1}^n \mathcal{F}(y_i)  \,,
\]
where  $\mathcal{F}$ is the distribution function (CDF) of $x_1$.
In particular, the probability that $\mathbf{x}, \mathbf{y}, \mathbf{z}$ are intransitive tends to zero, as $n\to+\infty$.
\end{theorem}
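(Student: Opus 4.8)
The plan is to reduce Theorem~\ref{thm41}, by a monotone change of variables, to the already-settled case of i.i.d.\ $N(0,1/2)$ faces (the $\rho=s_{1/2}$ model, for which the CDF-ordering property~\eqref{corderp} was obtained above from~\eqref{28hou}). Throughout I use the observation --- recalled in the discussion after Theorem~\ref{con:weak} --- that the statistic $W^{(kk')}=\sum_{i,j\in[n]}\1[k_i>k'_j]$ does not change when a common strictly increasing function is applied to all faces, and that for continuous faces ties occur with probability zero, so that ``$\mathbf{x}$ beats $\mathbf{y}$'' is precisely the event $\{W^{(xy)}>n^2/2\}$.

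Step one is to reduce to uniform faces. Writing $g$ for the density of $x_1$, the hypothesis that $\supp(g)$ is a union of (possibly infinite) intervals makes $\mathcal{F}$ continuous on $\R$ and strictly increasing on each component interval of $\supp(g)$. Set $u_i:=\mathcal{F}(x_i)$, $v_i:=\mathcal{F}(y_i)$, $w_i:=\mathcal{F}(z_i)$; by the probability integral transform these are i.i.d.\ uniform on $(0,1)$. Since $\mathcal{F}$ is non-decreasing on $\R$ and, restricted to $\supp(g)$, fails to be injective only on the at most countably many pairs of endpoints bounding a gap of $\supp(g)$, and since $x_i\in\supp(g)$ a.s., we have $\1[x_i>y_j]=\1[u_i>v_j]$ for all $i,j$ with probability one; hence $W^{(xy)}=\sum_{i,j\in[n]}\1[u_i>v_j]$ a.s.\ and of course $\sum_{i=1}^n\mathcal{F}(x_i)=\sum_{i=1}^n u_i$. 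So it suffices to prove the statement for i.i.d.\ uniform faces.

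Step two is to transport this to the $\rho=s_{1/2}$ model. The map $t\mapsto\tfrac{1}{\sqrt{2}}\,\Phi^{-1}(t)$ is a strictly increasing bijection of $(0,1)$ onto $\R$ taking the uniform law to $N(0,1/2)$, so $a_i:=\tfrac{1}{\sqrt{2}}\Phi^{-1}(u_i)$, and similarly $b_i,c_i$, are i.i.d.\ copies of $\{G_1,\dots,G_n\}$ in the $\rho=s_{1/2}$ model; monotonicity gives $\sum_{i,j\in[n]}\1[u_i>v_j]=\sum_{i,j\in[n]}\1[a_i>b_j]=W^{(ab)}$, and $\sum_i u_i=\sum_i F(a_i)$ with $F(y)=\Phi(\sqrt{2}\,y)$ the CDF of $N(0,1/2)$. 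By the CDF-ordering property for the $\rho=s_{1/2}$ dice --- which, recall, follows from $\Var\big(W^{(ab)}-n\sum_i[F(a_i)-F(b_i)]\big)=O(n^2)$ together with the classical CLT, under which $n^{-1/2}\sum_i[F(G_i)-F(G_i')]$ converges in law to $N(0,1/6)$ and therefore anti-concentrates at scale $n^{3/2}$ --- with probability tending to one we have $W^{(ab)}>n^2/2\iff\sum_iF(a_i)>\sum_iF(b_i)$, i.e.\ $\mathbf{x}\beats\mathbf{y}\iff\sum_i\mathcal{F}(x_i)>\sum_i\mathcal{F}(y_i)$. (Alternatively, step two can be skipped and the uniform case quoted directly from~\cite{PolVII}.)

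For the transitivity claim, put $T_x:=\sum_{i=1}^n\mathcal{F}(x_i)$, $T_y:=\sum_{i=1}^n\mathcal{F}(y_i)$, $T_z:=\sum_{i=1}^n\mathcal{F}(z_i)$; applying the pairwise statement to $(\mathbf{x},\mathbf{y})$, $(\mathbf{y},\mathbf{z})$, $(\mathbf{z},\mathbf{x})$ and a union bound, with probability tending to one the ``beats'' relation on $\{\mathbf{x},\mathbf{y},\mathbf{z}\}$ agrees with the linear order of $T_x,T_y,T_z$, which are almost surely distinct because $\mathcal{F}(x_1)$ is a continuous random variable; hence the triple is transitive with probability tending to one. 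I do not expect a genuine obstacle here: the only delicate points are the almost-sure identity $\1[x_i>y_j]=\1[u_i>v_j]$ when $\supp(g)$ is disconnected (so that $\mathcal{F}$ is flat across the gaps) and the almost-sure distinctness of the $T$-values, while everything of substance is inherited from the $\rho=s_{1/2}$ computation already carried out.
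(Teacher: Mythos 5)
Your proposal is correct and follows essentially the same route as the paper: both reduce to the i.i.d.\ $N(0,1/2)$ case (for which the CDF-ordering property was already established via the variance bound and the CLT) by a monotone coupling, the paper transporting the Gaussian dice forward through $\mathcal{F}^{-1}\circ F$ while you transport $\mathbf{x}$ backward through $\mathcal{F}$ and $\Phi^{-1}$ --- the same coupling read in the opposite direction. Your explicit treatment of the gap endpoints of $\supp(g)$ and of the almost-sure distinctness of the $T$-values matches the role played by the identity $\mathcal{F}\circ\mathcal{F}^{-1}(p)=p$ and the ``with probability 1'' steps in the paper's chain of equivalences.
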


 \begin{proof} Let $\bfa,\bfb$ be given as in the case where $\rho = s_{1/2}$ and $F$ be the distribution function of $a_1\sim N(0,1/2)$, then  by integral transform, we can assume that
 \[
   \big\{ (x_i, y_i) : i\in\mathbb{N} \big\} =  \Big\{   \big(\mathcal{F}^{-1}\circ
   F(a_i),  \mathcal{F}^{-1}\circ F(b_i)\big) : i\in\mathbb{N} \Big\} \,,
 \]
 where $\mathcal{F}^{-1}(p) : = \inf\{ x\in\mathbb{R}: \mathcal{F}(x) \geq p \}$ is the generalized inverse of $\mathcal{F}$. It is clear that $F(a_i)\in(0,1)$ almost surely and due to our assumption on $\mathcal{F}$, we have $\mathcal{F}\circ \mathcal{F}^{-1}(p) = p$ for any $p\in(0,1)$. 
 It follows that
\begin{align*}
 &\quad  \sum_{i=1}^n \mathcal{F}(x_i)  >   \sum_{i=1}^n \mathcal{F}(y_i) \\
 & ~ \overset{\text{with  prob. 1}}{\Longleftrightarrow}  \quad  \sum_{i=1}^n F(a_i)  >   \sum_{i=1}^n F(b_i)  \\
  &   \overset{\text{with high prob.}}{\Longleftrightarrow}  ~  \sum_{i,j=1}^n \1[a_i > b_j] > \frac{n^2}{2}   \Leftrightarrow   \sum_{i,j=1}^n \1\big[ F(a_i) > F(b_j) \big] > \frac{n^2}{2}    \\
   & ~ \overset{\text{with  prob. 1}}{\Longleftrightarrow}  \quad \sum_{i,j=1}^n \1[\mathcal{F}^{-1}\circ F(a_i) > \mathcal{F}^{-1}\circ F(b_j) ] > \frac{n^2}{2}  \\
      &\qquad  \Longleftrightarrow \quad\quad  \sum_{i,j=1}^n \1[x_i > y_j ] > \frac{n^2}{2}  \, .
\end{align*}
Hence the desired conclusions follow immediately. 
 
 \end{proof}

 \medskip

 In the following, we provide the proof of  our Theorem \ref{SY-all} as well as some  results for the general stationary Gaussian dice. We first state two results of central importance to our approach.
  
\begin{theorem}[\cite{BM83}, Breuer-Major theorem] \label{BM-thm} Fix an integer $d\geq 1$. Assume $f\in L^2(\R, \gamma)$ admits the following expansion in $L^2(\gamma)$ ~ $($Recall $\gamma(dx)=\frac{1}{\sqrt{2\pi}}\exp(-x^2/2)dx)$:
\[
    f = \sum_{q=d}^\infty \text{\rm coef}(q) H_q \quad \text{with $\text{\rm coef}(d) \neq 0$;  \quad $d$ is called the Hermite rank of $f$.}
\]
Assume also that $(X_k, k\in\mathbb{Z})$ is a centered stationary Gaussian sequence with {\bf unit} variance\footnote{That is, $\widetilde{\rho}(0) = 1$, which is different from $\rho(0) =1/2$.} such that its correlation function $\widetilde{\rho}$ belongs to $\ell^d(\mathbb{Z})$, where $\widetilde{\rho}(i-j) = \E[ X_i X_j ]$ for any $i,j\in\mathbb{Z}$.  

Then 
\[
\frac{1}{\sqrt{n}} \sum_{k=1}^n f(X_k) ~\text{converges in law to }~ N(0, \sigma^2) ~\text{as $n\to+\infty$}\,,
\]
where 
${\displaystyle
\sigma^2: = \sum_{q=d}^\infty q! \text{\rm coef}(q)^2 \sum_{v\in\mathbb{Z}} \widetilde{\rho}(v)^q \in [0, +\infty)
}$ is part of the conclusion.
\end{theorem}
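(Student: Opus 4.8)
The plan is to reproduce the standard Malliavin--Stein proof of this (Breuer--Major) theorem, which fits the machinery already invoked in this section. \textbf{Step 1 (chaos expansion and variance).} Realize the sequence on an isonormal Gaussian process: let $\H$ be a real separable Hilbert space carrying unit vectors $(e_k)_{k\in\mathbb{Z}}$ with $\langle e_i,e_j\rangle_\H=\widetilde\rho(i-j)$, set $X_k=I_1(e_k)$ where $I_q$ denotes the $q$th multiple Wiener--It\^o integral, and write $c_q:=\text{coef}(q)$. Since $H_q(X_k)=I_q(e_k^{\otimes q})$ and $f=\sum_{q\ge d}c_q H_q$ in $L^2(\gamma)$, the random variable of interest has the chaos expansion
\[
F_n:=\frac{1}{\sqrt n}\sum_{k=1}^n f(X_k)=\sum_{q\ge d}I_q(g_{q,n}),\qquad g_{q,n}:=\frac{c_q}{\sqrt n}\sum_{k=1}^n e_k^{\otimes q},
\]
the series converging in $L^2(\Omega)$. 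By orthogonality of the chaoses, $\E[F_n^2]=\sum_{q\ge d}q!\,\|g_{q,n}\|^2=\sum_{q\ge d}q!\,c_q^2\sum_{|v|<n}(1-|v|/n)\,\widetilde\rho(v)^q$. For fixed $q\ge d$, the bounds $|\widetilde\rho|\le 1$ and $\widetilde\rho\in\ell^d(\mathbb{Z})$ give $\widetilde\rho\in\ell^q(\mathbb{Z})$, so by dominated convergence the inner sum tends to $\sum_{v\in\mathbb{Z}}\widetilde\rho(v)^q$; and since the $q$th summand is dominated, uniformly in $n$, by $q!\,c_q^2\,\|\widetilde\rho\|_{\ell^d}^d$ with $\sum_{q}q!\,c_q^2=\|f\|_{L^2(\gamma)}^2<\infty$, a further dominated-convergence argument (now in $q$) yields $\E[F_n^2]\to\sigma^2:=\sum_{q\ge d}\sigma_q^2$ with $\sigma_q^2:=q!\,c_q^2\sum_{v\in\mathbb{Z}}\widetilde\rho(v)^q\in[0,\infty)$; in particular $\sigma^2<\infty$, which also shows the limiting variance in the statement is well defined.

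\textbf{Step 2 (single-chaos CLT via contractions --- the main obstacle).} Fix $q\ge d$. By the fourth moment theorem (see \cite{bluebook}), since $q!\,\|g_{q,n}\|^2\to\sigma_q^2$ it suffices to prove $\|g_{q,n}\otimes_r g_{q,n}\|\to 0$ for each $r=1,\dots,q-1$. Expanding,
\[
\|g_{q,n}\otimes_r g_{q,n}\|^2=\frac{c_q^4}{n^2}\sum_{i,j,k,\ell=1}^n\widetilde\rho(i-j)^r\,\widetilde\rho(k-\ell)^r\,\widetilde\rho(i-k)^{q-r}\,\widetilde\rho(j-\ell)^{q-r},
\]
a weighted $4$-cycle on the vertices $i,j,\ell,k$ with exponents $r,q-r,r,q-r$. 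Introducing the gap variables $a_1=j-i,\ a_2=\ell-j,\ a_3=k-\ell$ (so $k-i=a_1+a_2+a_3$), using that $\widetilde\rho$ is even, and noting each gap-triple is realized by at most $n$ admissible indices $i$, we get
\[
\|g_{q,n}\otimes_r g_{q,n}\|^2\le\frac{c_q^4}{n}\sum_{|a_1|,|a_2|,|a_3|<n}|\widetilde\rho(a_1)|^r\,|\widetilde\rho(a_2)|^{q-r}\,|\widetilde\rho(a_3)|^r\,|\widetilde\rho(a_1+a_2+a_3)|^{q-r},
\]
so it remains to show the triple sum is $o(n)$. When $r\ge d$ or $q-r\ge d$ one peels off an $\ell^1$ factor and estimates the remaining convolution $|\widetilde\rho|^{r}*|\widetilde\rho|^{q-r}*|\widetilde\rho|^{r}$ against $|\widetilde\rho|^{q-r}$ by Young's inequality, obtaining an $O(1)$ bound. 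The genuinely delicate case is $r<d$ and $q-r<d$ (which forces $q<2d$, so only finitely many chaoses are affected): here the triple sum over all of $\mathbb{Z}^3$ may diverge, but the truncated sum is still $o(n)$, which one extracts from $\widetilde\rho\in\ell^d$, $|\widetilde\rho|\le 1$ and a careful combination of H\"older/Young estimates with dominated convergence on the truncated ranges. This convolution estimate is the technical heart of the theorem; everything else is bookkeeping.

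\textbf{Step 3 (assembling the chaoses).} For any $N\ge d$, the multivariate fourth moment theorem (see \cite{bluebook}) applies to the vector $(I_q(g_{q,n}))_{d\le q\le N}$: its covariance matrix converges to $\diag(\sigma_d^2,\dots,\sigma_N^2)$ (distinct chaoses being orthogonal, so the off-diagonal entries vanish identically), and all componentwise contractions vanish by Step 2; hence $\sum_{q=d}^N I_q(g_{q,n})$ converges in law to $N\big(0,\sum_{q=d}^N\sigma_q^2\big)$ by the continuous mapping theorem. The tail satisfies $\E\big[\big(\sum_{q>N}I_q(g_{q,n})\big)^2\big]=\sum_{q>N}q!\,\|g_{q,n}\|^2\le\|\widetilde\rho\|_{\ell^d}^d\sum_{q>N}q!\,c_q^2$, which tends to $0$ as $N\to\infty$ uniformly in $n$; a standard $3\varepsilon$ truncation argument then upgrades the partial-sum CLT to $F_n$ converging in law to $N(0,\sigma^2)$ with $\sigma^2=\sum_{q\ge d}q!\,c_q^2\sum_{v\in\mathbb{Z}}\widetilde\rho(v)^q$, as claimed. (One could instead follow the original route of \cite{BM83} via the method of moments and the diagram formula for products of Hermite polynomials, but the cycle/convolution estimate of Step 2 reappears there in combinatorial form.)
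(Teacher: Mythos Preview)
The paper does not give its own proof of this theorem: it is stated as a quotation of \cite{BM83}, and immediately after the statement the authors write ``For a modern proof using fourth moment theorems, one can refer to \emph{e.g.,} Theorem 7.2.4 in \cite{bluebook}.'' Your proposal is precisely a sketch of that Nourdin--Peccati proof, so in that sense there is nothing to compare: you and the paper point to the same argument. In particular, the key contraction estimate you flag in Step~2 is exactly the ingredient the paper isolates separately as Lemma~\ref{eq727} (equation~(7.2.7) in \cite{bluebook}); your outline is correct and matches the referenced proof.
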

For a modern proof using fourth moment theorems, one can refer to \emph{e.g.,} Theorem 7.2.4 in \cite{bluebook}. In particular, we also need one ingredient from this proof, which we state in the following. 

\begin{lemma}\label{eq727} Let the assumptions of Theorem \ref{BM-thm} be satisfied, that is, $\wt{\rho}\in\ell^d(\mathbb{Z})$. For any integer $q\geq d\vee 2$, and any $r\in\{1, \ldots, q-1\}$, we have 
\[
    n^{-1 + \frac{r}{q}} \sum_{\vert j\vert < n} \vert \widetilde{\rho}(j) \vert^r = o(1) \quad \text{\rm as $n\to+\infty$; \quad see  equation (7.2.7) in \cite{bluebook}.}
\]

\end{lemma}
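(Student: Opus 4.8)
The plan is to prove the bound by a case split according to whether $r \geq d$ or $r < d$, the two regimes requiring genuinely different treatments. The one structural fact I would use throughout is that $|\widetilde{\rho}(j)| \leq 1$ for every $j$: since $\widetilde{\rho}(j) = \E[X_iX_{i+j}]$ is the correlation coefficient of two unit-variance Gaussians, Cauchy--Schwarz forces it into $[-1,1]$. An immediate consequence is that $|\widetilde{\rho}(j)|^p \leq |\widetilde{\rho}(j)|^d$ whenever $p \geq d$, so the hypothesis $\widetilde{\rho}\in\ell^d(\mathbb{Z})$ upgrades to $\widetilde{\rho}\in\ell^p(\mathbb{Z})$ for all $p \geq d$; I would record this once at the start.

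First I would handle the easy case $r \geq d$. Then $\widetilde{\rho}\in\ell^r(\mathbb{Z})$ by the remark above, so $\sum_{|j|<n}|\widetilde{\rho}(j)|^r \leq \sum_{j\in\mathbb{Z}}|\widetilde{\rho}(j)|^r =: C < \infty$ uniformly in $n$. Since $r \leq q-1 < q$, the exponent $-1+r/q$ is strictly negative, and therefore $n^{-1+r/q}\sum_{|j|<n}|\widetilde{\rho}(j)|^r \leq C\,n^{-1+r/q} \to 0$, as desired.

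The substantial case is $1 \leq r < d$, which forces $d\geq 2$ and hence $q \geq d\vee 2 = d \geq 2$. Here I would fix an integer $M$ (to be sent to infinity afterwards) and, for $n>M$, split off a central block:
\[
  \sum_{|j|<n}|\widetilde{\rho}(j)|^r \;\leq\; (2M+1) \;+\; \sum_{M<|j|<n}|\widetilde{\rho}(j)|^r ,
\]
where the first term uses only $|\widetilde{\rho}(j)|\leq 1$. On the tail block I would apply H\"older's inequality with conjugate exponents $d/r$ and $d/(d-r)$ (both $>1$ as $1\leq r<d$):
\[
  \sum_{M<|j|<n}|\widetilde{\rho}(j)|^r \;\leq\; \Big(\sum_{M<|j|<n}|\widetilde{\rho}(j)|^d\Big)^{r/d}\,(2n)^{1-r/d} \;\leq\; \varepsilon_M^{\,r/d}\,(2n)^{1-r/d},
\]
with $\varepsilon_M := \sum_{|j|>M}|\widetilde{\rho}(j)|^d \to 0$ as $M\to\infty$. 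Multiplying by $n^{-1+r/q}$: the central block contributes $n^{-1+r/q}(2M+1)\to 0$ for fixed $M$ (again $r/q<1$), while the tail contributes $2^{1-r/d}\varepsilon_M^{\,r/d}\,n^{r/q-r/d}\leq 2^{1-r/d}\varepsilon_M^{\,r/d}$ because $q\geq d$ makes the exponent $r/q-r/d\leq 0$. Hence $\limsup_{n\to\infty} n^{-1+r/q}\sum_{|j|<n}|\widetilde{\rho}(j)|^r \leq 2^{1-r/d}\varepsilon_M^{\,r/d}$ for every $M$, and letting $M\to\infty$ gives the claim.

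The hard part is really just the boundary value $q=d$: applying H\"older once to the \emph{entire} sum $\sum_{|j|<n}|\widetilde{\rho}(j)|^r$ yields, after multiplying by $n^{-1+r/q}$, only an $O(1)$ bound rather than $o(1)$. The two-scale truncation above is precisely the device that overcomes this, extracting the decay from the tail $\varepsilon_M$ of the $\ell^d$-norm instead of from a strictly negative power of $n$, with the finite initial block then crushed by the genuinely negative exponent $-1+r/q$. Everything else is routine bookkeeping.
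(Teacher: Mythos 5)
Your proof is correct, and since the paper does not prove this lemma itself but simply cites equation (7.2.7) of Nourdin--Peccati, your argument serves as a complete self-contained justification of the cited fact. The two-scale truncation plus H\"older is exactly the standard device used in that reference (the only cosmetic difference being that you pair H\"older exponents with $d$ rather than with $q$ in the sub-rank case $r<d$, which works equally well since $q\geq d$ makes the resulting power of $n$ nonpositive).
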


 \paragraph{Proof of Theorem \ref{SY-all}.}  Note that we have proved the case where $H =1/2$.  Our proof then consists of only two parts: in the first part, we prove our result for  $H\in(1/2, 1)$ and in the second part, we prove a stronger result (Theorem \ref{Lygon1}) that includes  the case $H\in(0,1/2)$.

  We proceed in the same way as in previous subsection: we first estimate the variance of the difference 
 $W^{(ab)}  - n  \sum_{i=1}^n \big[ F(a_i) -  F(b_i)\big]$, then prove a CLT for $W^{(ab)}$. We begin with the following two lemmas dealing with two  variance estimates.

\begin{lemma}\label{Lui2} Let ${\it\bfa}, {\it\bfb},{\it\bfc}$ be i.i.d.~copies of the centered stationary Gaussian sequence  $\{ G_i, i\in\mathbb{N} \}$ with the  correlation function $\rho$ such that $\rho(0)=1/2$. Then 
 \begin{align}
  &   \Var\Big(  W^{(ab)}  - n  \sum_{i=1}^n \big[ F(a_i) -  F(b_i)\big]  \Big)  =   \frac{1}{3} \Var\big( W^{(ab)} +  W^{(bc)} +  W^{(ca)} \big)    \label{parkville1}  \\
  & = \sum_{q\geq 1} d_{2q+1}^2 (2q+1)! \sum_{v=1}^{2q} {2q+1 \choose v}  \left(  \sum_{\vert i\vert < n}  (n - \vert i\vert )\rho(i)^v \right)   \notag \\
   &\qquad\qquad\qquad\qquad \times \left(   \sum_{\vert j\vert < n}  (n - \vert j\vert )\rho(j)^{2q+1-v} \right)  .  \label{parkville2}  
  \end{align}
\begin{itemize}
\item[$(1)$] If $\rho\in\ell^3(\mathbb{Z})$,  then {\rm
\[
\Var\left(  W^{(ab)}  - n  \sum_{i=1}^n \big[ F(a_i) -  F(b_i)\big]  \right)  =  o(n^3) ~.
 \]
}

\item[$(2)$] Consider $\rho = s_H$, then the case $H\in(0, 5/6)$ is covered by point $(1)$; if $H\in[ 5/6, 1)$, we have 
\[
 \Var\Big(  W^{(ab)}  - n  \sum_{i=1}^n \big[ F(a_i) -  F(b_i)\big]  \Big)   \sim \frac{H^2(2H-1)}{16\pi (4H-3)} n^{6H-2} \,.
\]

\end{itemize}

\end{lemma}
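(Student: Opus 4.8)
The plan is to run everything through the Hermite expansions \eqref{ind_chaos}--\eqref{Phi-exp}. First I would record the key algebraic identity: applying the addition formula $H_m(\alpha x+\beta y)=\sum_{k=0}^m\binom{m}{k}\alpha^k\beta^{m-k}H_k(x)H_{m-k}(y)$ with $\alpha=2^{-1/2}$, $\beta=-2^{-1/2}$ to $a_i-b_j=\alpha(\sqrt2\,a_i)+\beta(\sqrt2\,b_j)$ and using \eqref{ind_chaos} gives
\begin{align*}
W^{(ab)}-\tfrac{n^2}{2}=\sum_{q\ge0}d_{2q+1}\,2^{-q-\frac12}\sum_{i,j=1}^n\sum_{k=0}^{2q+1}\binom{2q+1}{k}(-1)^{2q+1-k}H_k(\sqrt2\,a_i)H_{2q+1-k}(\sqrt2\,b_j)\,.
\end{align*}
The two ``diagonal'' terms $k\in\{0,2q+1\}$ collapse: since $\sum_i H_0(\sqrt2\,a_i)=n$ and $d_{2q+1}2^{-q-\frac12}=\ell_{2q+1}$, they sum, by \eqref{Phi-exp} and $F(x)=\Phi(\sqrt2\,x)$, to exactly $n\sum_{i=1}^n[F(a_i)-F(b_i)]$; hence $W^{(ab)}-\tfrac{n^2}{2}-n\sum_i[F(a_i)-F(b_i)]$ is the remaining ``off-diagonal'' part with $1\le k\le 2q$. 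Writing $S_r:=\sum_{|m|<n}(n-|m|)\rho(m)^r$ and taking variances, orthogonality of Wiener chaos (terms with different $q$, and within a fixed $q$ terms with different $k$ since $\bfa\perp\bfb$) kills all cross terms, while $\E[H_k(\sqrt2\,a_i)H_{k'}(\sqrt2\,a_{i'})]=\delta_{kk'}2^kk!\,\rho(i-i')^k$ and the collapse $\binom{2q+1}{k}^2k!(2q+1-k)!=(2q+1)!\binom{2q+1}{k}$ produce \eqref{parkville2} (with $v=k$).

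For the identity \eqref{parkville1} I would expand $\Var(W^{(ab)})$ as in \eqref{VARH} and compute the three ``cyclic'' covariances. Each of $\Cov(W^{(ab)},W^{(bc)})$, $\Cov(W^{(bc)},W^{(ca)})$, $\Cov(W^{(ca)},W^{(ab)})$ equals $-R$ with $R:=n^2\sum_{q\ge0}d_{2q+1}^2(2q+1)!\,S_{2q+1}$: in each case the two dice share one label, so after taking the Hermite expectation one free index in each die survives (a factor $n^2$) and the remainder is $-(2q+1)!\sum_{j,k\in[n]}\rho(j-k)^{2q+1}$, the sign coming from $2q+1$ being odd. Expanding $(\rho(i-k)+\rho(j-\ell))^{2q+1}$ binomially in \eqref{VARH} and using $S_0=n^2$ shows $\Var(W^{(ab)})=2R+T$ with $T$ the right-hand side of \eqref{parkville2}; hence $\Var(W^{(ab)}+W^{(bc)}+W^{(ca)})=3(2R+T)-6R=3T$, which is \eqref{parkville1}.

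For $(1)$, set $S^+_r:=\sum_{|m|<n}(n-|m|)|\rho(m)|^r$, so $|T|\le\sum_{q\ge1}d_{2q+1}^2(2q+1)!\sum_{v=1}^{2q}\binom{2q+1}{v}S^+_vS^+_{2q+1-v}$. Since $\rho$ is a correlation function, $|\rho(m)|\le\rho(0)=1/2$, so for $r\ge3$ one gets $S^+_r\le n\|\rho\|_r^r\le n\,2^{-(r-3)}\|\rho\|_3^3=O(n)$, a geometric factor that defeats the combinatorial weight $\sum_{v=0}^{2q+1}\binom{2q+1}{v}=2^{2q+1}$ (and $\sum_q d_{2q+1}^2(2q+1)!=\tfrac14$, so the $q$-series converges); for $r\in\{1,2\}$, Lemma~\ref{eq727} applied with $d=3$ to the rescaled unit-variance correlation $2\rho\in\ell^3$ gives $\sum_{|m|<n}|\rho(m)|^r=o(n^{1-r/3})$, whence $S^+_r=o(n^{2-r/3})$ (this also follows directly from $|\rho(m)|\to0$ and Hölder). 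Splitting the double sum by whether $\min(v,2q+1-v)\ge3$ or $\le2$: the first part is $O(n^2)$; in the second part only $v\in\{1,2,2q-1,2q\}$ occur, and for $q\ge2$ the complementary index is $\ge3$ so that factor is geometric and the binomial weight polynomial in $q$, contributing $o(n^{5/3})\cdot O(n)=o(n^{8/3})$; the lone remaining case $q=1,v=1$ contributes $S^+_1S^+_2=o(n^{5/3})\cdot O(n^{4/3})=o(n^3)$. Thus $T=o(n^3)$. I expect this bookkeeping -- playing the geometric decay of the $\|\rho\|_r^r$ against the combinatorial growth while keeping the low-power terms $S^+_1,S^+_2$ controlled -- to be the main technical obstacle in $(1)$.

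For $(2)$, from \eqref{cor-est} I have $s_H\in\ell^p\iff p(2-2H)>1$, so $s_H\in\ell^3\iff H<5/6$ and $H\in(0,5/6)$ is covered by $(1)$. For $H\in[5/6,1)$, $s_H\ge0$ (as $H>1/2$) and $s_H(m)\sim c_H|m|^{2H-2}$ with $c_H=H(2H-1)/2$, so the lowest powers dominate \eqref{parkville2}: the $q=1$ contribution is $d_3^2\,3!\,\bigl(\binom{3}{1}+\binom{3}{2}\bigr)S_1S_2=\tfrac{1}{2\pi}S_1S_2$. Here $S_1=\sum_{i,j=1}^n s_H(i-j)=\tfrac12\E[(B_n^H)^2]=\tfrac12 n^{2H}$, using that the increments of $B^H$ have covariance $2s_H$ (equivalently a telescoping of \eqref{s_H}), while $S_2=\sum_{|m|<n}(n-|m|)s_H(m)^2\sim 2c_H^2\sum_{m=1}^{n}(n-m)m^{4H-4}\sim\dfrac{2c_H^2}{(4H-3)(4H-2)}n^{4H-2}=\dfrac{H^2(2H-1)}{4(4H-3)}n^{4H-2}$; multiplying, $T\sim\tfrac{1}{2\pi}S_1S_2\sim\dfrac{H^2(2H-1)}{16\pi(4H-3)}n^{6H-2}$, as claimed. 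It then remains to verify that all other summands of \eqref{parkville2} are $o(n^{6H-2})$: for $q\ge2$ each product $S^+_vS^+_{2q+1-v}$ is of order at most $n^{2H+\max(1,6H-4)}=o(n^{6H-2})$ (using $H<1$), and the sum over $q$ converges after splitting each inner sum according to whether $v$ or $2q+1-v$ exceeds the threshold $v_0(H)$ past which $s_H\in\ell^v$ and $\|s_H\|_v^v$ decays geometrically. Pinning down the exact constant (via $S_1=\tfrac12 n^{2H}$ and the explicit value of $d_3$) and confirming the $q\ge2$ remainder is uniformly of lower order is the delicate point of $(2)$.
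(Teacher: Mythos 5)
Your proposal is correct and runs on essentially the same engine as the paper's proof: the Hermite expansions \eqref{ind_chaos}--\eqref{Phi-exp} with the coefficient relation $\ell_{2q+1}=d_{2q+1}2^{-q-1/2}$, the product formula \eqref{prop221}, Lemma~\ref{eq727} for part $(1)$, and the same asymptotics of $\sum_{|i|<n}(n-|i|)s_H(i)^r$ for $r=1,2$ in part $(2)$. The only (welcome) organizational difference is that you reach \eqref{parkville2} in one stroke by identifying $n\sum_{i}[F(a_i)-F(b_i)]$ as exactly the part of the chaos decomposition of $W^{(ab)}-n^2/2$ having Hermite degree zero in one of the two dice and then invoking orthogonality, and you get \eqref{parkville1} from the cyclic covariances $\Cov(W^{(ab)},W^{(bc)})=-R$, whereas the paper assembles the identical formula from $\Var(W^{(ab)})$, the $\E[F(a_i)F(a_j)]$ terms of Lemma~\ref{betan}, and the expansion \eqref{sum-here} of Lemma~\ref{Esch}.
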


The proofs of the above lemma and  the following lemma will be postponed to the end of this section.

 \begin{lemma}\label{Belval} Let ${\it\bfa}, {\it\bfb}$ and  $\{ G_i, i\in\mathbb{N} \}$ be given as in Lemma \ref{Lui2}. The following statements hold true.
 \begin{itemize}
 \item[\rm (1)] If $\rho\in\ell^1(\mathbb{Z})$,  then,  with $\beta: =  2  \sum_{q\geq 0}d_{2q+1}^2 (2q+1)! \sum_{i\in\mathbb{Z}} \rho(i)^{2q+1}   \in[0,+\infty)$,
 \[
  \Var\big( W^{(ab)} \big) =  \beta n^3 + o(n^3)  \,.
 \]
 
 \item[\rm (2)]  Consider the case where $\rho = s_H$  is given as in \eqref{s_H}   : 
 
 \begin{enumerate}
 
 \item[\rm (i)] for $H\in(0,1/2]$,   $\Var\big( W^{(ab)} \big) = \beta n^3 + o(n^3)$ with $\beta$ defined as in point {\rm (1)}; moreover, $\beta > 0$ in this case.

 \item[\rm (ii)] for $H\in(1/2, 1)$,  $\Var\big( W^{(ab)} \big) = \dfrac{1}{2\pi}n^{2H+2} + o(n^{2H+2})$.
 
 \end{enumerate}
 
 \end{itemize}
  \end{lemma}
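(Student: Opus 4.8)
We start from the Hermite-expansion identity \eqref{VARH}, which we rewrite as
\[
\Var\bclr{W^{(ab)}}=\sum_{q\geq0}d_{2q+1}^2(2q+1)!\,S_q(n),\qquad
S_q(n):=\sum_{i,j,k,\ell\in[n]}\bclr{\rho(i-k)+\rho(j-\ell)}^{2q+1},
\]
and we analyse each $S_q(n)$ by the binomial theorem. Writing $T_0(n):=n^2$ and $T_r(n):=\sum_{i,k\in[n]}\rho(i-k)^r=\sum_{|m|<n}(n-|m|)\rho(m)^r$ for $r\geq1$, we have $S_q(n)=\sum_{v=0}^{2q+1}\binom{2q+1}{v}T_v(n)T_{2q+1-v}(n)$, and we separate the two extreme terms $v\in\{0,2q+1\}$, of total $2n^2T_{2q+1}(n)$, from the middle part $R_q(n):=\sum_{v=1}^{2q}\binom{2q+1}{v}T_v(n)T_{2q+1-v}(n)$. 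Two facts will be used throughout: (a) since $|\rho(m)|\leq\rho(0)=\tfrac12$, we have $|\rho(m)|^r\leq 2^{1-r}|\rho(m)|$, and hence $\|\rho\|_r^r\leq 2^{1-r}\|\rho\|_1$ for every $r\geq1$ whenever $\rho\in\ell^1(\mathbb{Z})$; (b) by \eqref{prop221}, $(2q+1)!\,S_q(n)=\E\bbcls{\bclr{\sum_{i,j\in[n]}H_{2q+1}(a_i-b_j)}^2}\geq0$, so each $S_q(n)\geq0$ and every limit of $S_q(n)/n^3$ or of $S_q(n)/n^{2H+2}$ is nonnegative.

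\emph{Part (1).} Assume $\rho\in\ell^1(\mathbb{Z})$. For fixed $q$, dominated convergence over $m$ (the general term being dominated by the summable $4^{-q}|\rho(m)|$) gives $\tfrac{2}{n}T_{2q+1}(n)\to 2\sum_{m}\rho(m)^{2q+1}$, uniformly bounded by $2\cdot4^{-q}\|\rho\|_1$. For the middle part, the bound $|T_v(n)|\leq 2^{1-v}n\|\rho\|_1$ yields $\tfrac{1}{n^3}|R_q(n)|\leq \tfrac{1}{n}\|\rho\|_1^2\,2^{1-2q}\sum_{v=1}^{2q}\binom{2q+1}{v}\leq \tfrac{4\|\rho\|_1^2}{n}$, uniformly in $q$. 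Thus
\[
\frac{\Var(W^{(ab)})}{n^3}=\sum_{q\geq0}d_{2q+1}^2(2q+1)!\,\bclr{\tfrac{2}{n}T_{2q+1}(n)+\tfrac{1}{n^3}R_q(n)},
\]
where the $R_q$-part is at most $\tfrac{4\|\rho\|_1^2}{n}\sum_q d_{2q+1}^2(2q+1)!=\tfrac{\|\rho\|_1^2}{n}\to0$ by \eqref{dueTo}, while the main part converges to $\beta=2\sum_q d_{2q+1}^2(2q+1)!\sum_m\rho(m)^{2q+1}$ by dominated convergence over $q$ (dominating sequence $2\cdot4^{-q}\|\rho\|_1 d_{2q+1}^2(2q+1)!$, summable by \eqref{eq:78}). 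The same bounds show $0\leq\beta\leq\|\rho\|_1/3<\infty$.

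\emph{Part (2).} For $H\in(0,\tfrac12)$, \eqref{cor-est} gives $s_H(k)=O(|k|^{2H-2})$ with $2H-2<-1$, so $s_H\in\ell^1$ and part (1) applies; for $H=\tfrac12$ we have $s_{1/2}=\tfrac12\delta_0\in\ell^1$ and $\beta=\tfrac16$ directly from \eqref{eq:78}. This settles (i) apart from $\beta>0$ when $H<\tfrac12$. For that, recall that $s_H(k)<0$ for $k\neq0$; telescoping \eqref{s_H} gives $\sum_{k=1}^N s_H(k)=\tfrac14\bclr{(N+1)^{2H}-N^{2H}-1}\to-\tfrac14$, hence $\sum_{k\geq1}|s_H(k)|=\tfrac14$, so $\sum_{k\geq1}|s_H(k)|^3\leq\bclr{\sum_{k\geq1}|s_H(k)|}^3=\tfrac{1}{64}$ and therefore $\sum_{m}s_H(m)^3=\tfrac18-2\sum_{k\geq1}|s_H(k)|^3\geq\tfrac{3}{32}$; since every summand in $\beta$ is nonnegative, retaining only $q=1$ gives $\beta\geq 2d_3^2\cdot3!\cdot\tfrac{3}{32}=\tfrac{1}{64\pi}>0$ (alternatively, $\sum_m s_H(m)^{2q+1}>0$ for all $q\geq1$ since the fractional-Gaussian-noise spectral density is strictly positive a.e.). For (ii), $H\in(\tfrac12,1)$, we have $s_H\notin\ell^1$ and the $q=0$ term dominates. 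A double telescoping computes $T_1(n)$ exactly: $\sum_{k=1}^{n-1}(n-k)s_H(k)=\sum_{j=1}^{n-1}\sum_{k=1}^{j}s_H(k)=\tfrac14\sum_{j=1}^{n-1}\bclr{(j+1)^{2H}-j^{2H}-1}=\tfrac14(n^{2H}-n)$, whence $T_1(n)=ns_H(0)+2\cdot\tfrac14(n^{2H}-n)=\tfrac12 n^{2H}$, and the $q=0$ contribution is $d_1^2 S_0(n)=d_1^2\cdot2n^2T_1(n)=\tfrac{1}{2\pi}n^{2H+2}$. For $q\geq1$, using $|\rho(i-k)+\rho(j-\ell)|\leq1$ we get $S_q(n)\leq S^*(n):=\sum_{i,j,k,\ell}|\rho(i-k)+\rho(j-\ell)|^3\leq 8n^2\sum_{i,k\in[n]}|\rho(i-k)|^3$ (using $(a+b)^3\leq4(a^3+b^3)$); by \eqref{cor-est}, $\sum_{i,k\in[n]}|s_H(i-k)|^3$ is $O(n)$ for $H<\tfrac56$ and $O(n^{6H-4}\log n)$ for $H\in[\tfrac56,1)$, so $S^*(n)=O\bclr{n^3+n^{6H-2}\log n}=o(n^{2H+2})$ because $H<1$. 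Hence $\sum_{q\geq1}d_{2q+1}^2(2q+1)!\,S_q(n)\leq\bclr{\tfrac14-d_1^2}S^*(n)=o(n^{2H+2})$, and $\Var(W^{(ab)})=\tfrac{1}{2\pi}n^{2H+2}+o(n^{2H+2})$.

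The only genuinely delicate step is the interchange of the infinite sum over Hermite ranks $q$ with the $n\to\infty$ asymptotics; this needs the contributions $S_q(n)$ controlled uniformly in $q$, which is precisely what the extreme/middle binomial split together with the a priori bound $|\rho(m)|\leq\tfrac12$ (and, in part (2)(ii), the crude estimate $|t|^{2q+1}\leq|t|^3$ for $|t|\leq1$) provide. Establishing $\beta>0$ in part (2)(i) is a separate, elementary point that hinges on the telescoping identity for $s_H$.
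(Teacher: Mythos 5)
Your proof is correct. Its skeleton coincides with the paper's --- the chaos expansion \eqref{VARH}, the binomial separation of the two extreme terms $2n^2T_{2q+1}(n)$ from the cross terms, dominated convergence in $q$, and the dominance of the first chaos for $H>1/2$ --- but the execution differs at three points, each in a way that makes your argument more self-contained. (a) For the cross terms in part (1), the paper invokes Lemma~\ref{Esch}, whose proof rests on the Breuer--Major estimate of Lemma~\ref{eq727} (and hence on $\rho\in\ell^3(\mathbb{Z})$); your $q$-uniform bound $|T_v(n)|\le 2^{1-v}\,n\sum_m|\rho(m)|$, using only $|\rho(m)|\le\rho(0)=\tfrac12$, gives $|R_q(n)|=O(n^2)$ uniformly in $q$ and bypasses that machinery entirely. (b) For $\beta>0$ in part (2)(i), the paper shows every $\sum_i s_H(i)^{2q+1}$, $q\ge1$, is strictly positive via \eqref{centerf}; you instead show every term is nonnegative --- a nice observation, since $(2q+1)!\,S_q(n)$ is itself a variance by \eqref{prop221} --- and exhibit strict positivity only at $q=1$ via the telescoping identity, which in addition yields the explicit lower bound $\beta\ge 1/(64\pi)$. (c) In part (2)(ii) the paper reuses Lemma~\ref{Lui2} together with the identity $\tfrac16=\sum_q d_{2q+1}^2(2q+1)!\,2^{-2q}$, whereas you compute $T_1(n)=\tfrac12 n^{2H}$ exactly by double telescoping, so the $q=0$ contribution is exactly $\tfrac{1}{2\pi}n^{2H+2}$, and you dispose of the chaoses $q\ge1$ with the crude but $q$-uniform bound $0\le S_q(n)\le S^*(n)=O(n^3+n^{6H-2}\log n)=o(n^{2H+2})$, avoiding the case split at $H=5/6$ that the paper inherits from Lemma~\ref{Lui2}. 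All estimates, including the two interchanges of $\sum_q$ with the $n\to\infty$ limit, check out.
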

 
 \medskip
 
\noindent{\it Assuming Lemma \ref{Lui2} and \ref{Belval}}, we prove Theorem \ref{SY-all} in the following.  As announced, we split our proof into two cases.
 
 \medskip
 
 \noindent{\it case 1: $\underline{H\in(1/2, 1)}$}. In this case, we deduce from the above two lemmas that 
 \begin{align} \label{2lemmas}
  \Var\Big(  W^{(ab)}  - n  \sum_{i=1}^n \big[ F(a_i) -  F(b_i)\big]  \Big)\big/  \Var\big( W^{(ab)} \big) = o(1) \,.
 \end{align}
 And we have, with $\ell_0 = \frac{1}{2\sqrt{\pi}}  $ (see \eqref{Phi-exp})
 \[
   \sum_{i\in[n]} \Big(F(a_i) - \frac{1}{2} \Big) = \sum_{i\in[n]}
   \Big( F(a_i) - \frac{1}{2} - \ell_0 \sqrt{2} a_i  \Big) +  \sqrt{2} \ell_0 \sum_{i\in[n]} a_i
 \]
 and it is clear that  the second part in the above sum is a centered Gaussian with
\[
  \Var\left( \sqrt{2} \ell_0 \sum_{i=1}^n  a_i   \right)= \frac{1}{2\pi} \sum_{i,j=1}^n s_H(i-j) \sim \frac{1}{4\pi} n^{2H} \,, \quad \text{as $n\to+\infty$,}
\]where the asymptotic behavior is implied by \eqref{cor-est}.       We know from \eqref{2lemmas} and point (ii) in Lemma \ref{Belval} that 
 \begin{align}\label{eq:use}
\Var\left(\sum_{i=1}^n \big[ F(a_i) -  F(b_i)\big] \right) \Big/    \Var\left( \sqrt{2} \ell_0 \sum_{i=1}^n  (a_i - b_i)  \right) \xrightarrow{n\to\infty} 1 \,.
 \end{align}
Recall the Slutsky's lemma, which says 
\begin{center} 
if  $X_n \xrightarrow[n\to\infty]{\rm law} X$ and $Y_n \xrightarrow[n\to\infty]{\rm law} 0$, then $X_n+Y_n \xrightarrow[n\to\infty]{\rm law} X$.
\end{center}
Thus, we deduce from  \eqref{eq:use} and the orthogonality property of Hermite polynomials,  
$n^{-H}  \sum_{i=1}^n \big[ F(a_i)\allowbreak-  F(b_i)\big] $ converges in law to $N\big(0, \frac{1}{2\pi}\big)$, as $n\to+\infty$. 
Combining~\eqref{2lemmas} with Slutsky's lemma again yields
\[
   \frac{1}{n^{H+1}} \Big( W^{(ab)}  -\frac{n^2}{2}  \Big)    \xrightarrow[n\to+\infty]{\rm law} N\big(0, \frac{1}{2\pi} \big) \, .
\]
Hence the desired conclusions follow from  similar  arguments as in the proof of Theorem   \ref{con:weak}.
           For the sake of completeness, we sketch it below:  first we define  
$
V_n = n \sum_{i=1}^n \big( F(a_i) - F(b_i) \big) 
$, then we have for any $\delta > 0$, 
\begin{align*}
&\mathbb{P}\left\{  \sgn(V_n) \neq  \sgn\big(W^{(ab)} -\frac{n^2}{2} \big)  \right\} \\
&\quad \leq \mathbb{P}\left\{ \Big\vert  \frac{W^{(ab)} -\frac{n^2}{2} - V_n }{n^{H+1}}   \Big\vert    > \delta    \right\}  + \mathbb{P}\left\{    \Big\vert  \frac{W^{(ab)}  -\frac{n^2}{2}  }{n^{H+1}}   \Big\vert    \leq \delta   \right\} ,
\end{align*}
 where the $\limsup$ of the RHS, as $n\to+\infty$, is bounded by $2\delta$. This implies that for $H\in(1/2, 1)$, the relation \eqref{corderp} occurs with high probability and thus, the probability of $a,b,c$ being intransitive asymptotically vanishes.

  \bigskip
 
 \noindent{\it  case 2: $\underline{H\in(0, 1/2)}$}.   In this case, the correlation function $s_H\in\ell^1(\mathbb{Z})$ and by  Lemma \ref{Belval}, 
 $
 \beta = 2  \sum_{q\geq 0}d_{2q+1}^2 (2q+1)! \sum_{i\in\mathbb{Z}} s_H(i)^{2q+1}  \in(0, +\infty) \,.
 $
 Then, \emph{case 2} is an immediate consequence of the following theorem. 
 
 \begin{theorem}\label{Lygon1}  Let ${\it\bfa}, {\it\bfb}, {\it\bfc}$ be i.i.d.~copies of $\{ G_1, \ldots, G_n\}$ with correlation function $\rho\in\ell^1(\mathbb{Z})$ such that the constant $\beta$ defined in Lemma \ref{Belval} is {\bf strictly} positive.   Then, with high probability,
   \begin{align}\label{corderp-rho}
  \sum_{i,j=1}^n \1[ a_i > b_j] > \frac{n^2}{2} \quad \text{if and only if } \quad \sum_{i=1}^n F(a_i) > \sum_{i=1}^n  F(b_i)  \, ,
\end{align}
where $F(x)= \Phi(\sqrt{2} x)$ is the distributional function of $G_1\sim N(0, 1/2)$.  As a consequence,  the probability of three dice ${\it\bfa}, {\it\bfb},{\it\bfc}$ being intransitive tends to zero, as $n\to+\infty$ .

 \end{theorem}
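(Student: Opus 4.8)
The plan is to mimic the proof of \emph{case~1} of Theorem~\ref{SY-all} (itself modelled on that of Theorem~\ref{con:weak}), the only new feature being that the central limit theorem will now be supplied by the Breuer--Major theorem rather than by dominance of the first Wiener chaos. As there, it suffices to handle a single ordered pair $(\bfa,\bfb)$: setting $V^{(ab)}:=\sum_{i=1}^n\big[F(a_i)-F(b_i)\big]$, I would prove
\[
\Pr\Big[\sgn\big(V^{(ab)}\big)\ne\sgn\big(W^{(ab)}-\tfrac{n^2}{2}\big)\Big]\longrightarrow0\qquad(n\to\infty),
\]
and then take a union bound over the pairs $(ab)$, $(bc)$, $(ca)$. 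For the union-bound step one notes that $\sum_iF(a_i)$, $\sum_iF(b_i)$ and $\sum_iF(c_i)$ are a.s.\ pairwise distinct (each difference being a nondegenerate absolutely continuous random variable), so that once all three ``beats'' relations agree with the linear order of these three reals they cannot close up into a $3$-cycle; the null events $\{a_i=b_j\}$ and $\{W^{(ab)}=n^2/2\}$ (the latter negligible by the anti-concentration below) are harmless, whence \eqref{corderp-rho} and the vanishing of the intransitivity probability.

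Two ingredients are needed. First, a variance bound: since $\rho\in\ell^1(\mathbb{Z})\subseteq\ell^3(\mathbb{Z})$, Lemma~\ref{Lui2}(1) gives $\Var\big(W^{(ab)}-nV^{(ab)}\big)=o(n^3)$, so Chebyshev's inequality yields $n^{-3/2}\big(W^{(ab)}-\tfrac{n^2}{2}-nV^{(ab)}\big)\to0$ in probability. Second, a CLT at scale $n^{3/2}$: here I would apply Theorem~\ref{BM-thm} to the \emph{unit-variance} stationary Gaussian sequence $\{\sqrt2\,a_i\}_i$, whose correlation function $2\rho$ lies in $\ell^1(\mathbb{Z})$, and to the function $\psi(z):=\Phi(z)-\tfrac12\in L^2(\R,\gamma)$, whose Hermite expansion is given by \eqref{Phi-exp} and whose Hermite rank is $1$ (the coefficient of $H_1$ equals $\ell_1=d_1 2^{-1/2}=(2\sqrt\pi)^{-1}\ne0$). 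This gives $n^{-1/2}\sum_{i=1}^n\big(F(a_i)-\tfrac12\big)\xrightarrow[n\to\infty]{\rm law}N\big(0,\tfrac{\beta}{2}\big)$, the limiting variance $\sum_{q\ge0}(2q+1)!\,\ell_{2q+1}^2\sum_{v}(2\rho(v))^{2q+1}$ being equal to $\tfrac12\beta$ by the identity $\ell_{2q+1}^2(2q+1)!\,2^{2q+1}=d_{2q+1}^2(2q+1)!$ and the definition of $\beta$ in Lemma~\ref{Belval}(1). Adding the independent analogue for $\bfb$ gives $n^{-1/2}V^{(ab)}\xrightarrow{\rm law}N(0,\beta)$, hence $n^{-3/2}\,nV^{(ab)}\xrightarrow{\rm law}N(0,\beta)$, and combining with the variance bound via Slutsky's lemma,
\[
\frac{1}{n^{3/2}}\Big(W^{(ab)}-\frac{n^2}{2}\Big)\xrightarrow[n\to\infty]{\rm law}N(0,\beta).
\]
Since $\beta>0$ this limit is nondegenerate, so $\limsup_n\Pr\big[\,|W^{(ab)}-\tfrac{n^2}{2}|\le\delta n^{3/2}\,\big]\le\Pr\big[\,|N(0,\beta)|\le\delta\,\big]$ for every $\delta>0$; in particular $\Pr[W^{(ab)}=n^2/2]\to0$.

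Finally, for any $\delta>0$ the event $\big\{\sgn(V^{(ab)})\ne\sgn(W^{(ab)}-\tfrac{n^2}{2})\big\}$ is contained in $\big\{|W^{(ab)}-\tfrac{n^2}{2}-nV^{(ab)}|>\delta n^{3/2}\big\}\cup\big\{|W^{(ab)}-\tfrac{n^2}{2}|\le\delta n^{3/2}\big\}$; the first probability is $o(1)$ by the variance bound and Chebyshev, the second has $\limsup$ at most $\Pr[|N(0,\beta)|\le\delta]$, and letting $\delta\downarrow0$ proves the displayed sign identity, and with it the theorem. The one nontrivial point --- since Theorem~\ref{BM-thm} and Lemmas~\ref{Lui2} and~\ref{Belval} do the analytic work --- is to make the Breuer--Major limit variance line up with the constant $\beta$: checking the Hermite-coefficient identity, confirming that $\psi=\Phi-\tfrac12$ has Hermite rank exactly $1$ so that the hypothesis $\widetilde\rho\in\ell^1$ of Theorem~\ref{BM-thm} is met by $\widetilde\rho=2\rho$, and (should one re-derive the variance by hand rather than read it off Theorem~\ref{BM-thm}) justifying the exchange of sum and limit via $\rho\in\ell^1$ and $|\rho(\cdot)|\le\rho(0)=\tfrac12$.
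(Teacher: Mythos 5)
Your proposal is correct and follows essentially the same route as the paper's proof: the same two variance estimates (Lemma~\ref{Lui2}(1) via $\ell^1\subseteq\ell^3$, and Lemma~\ref{Belval}(1)), the same application of the Breuer--Major theorem to $\{\sqrt2\,a_i\}$ with $f=\Phi-\tfrac12$ of Hermite rank~$1$ and the identity $d_{2q+1}^2=\ell_{2q+1}^2 2^{2q+1}$ to identify the limit variance $\beta/2$, followed by Slutsky and the standard $\delta$-splitting sign-comparison argument from \emph{case~1} of Theorem~\ref{SY-all}. No gaps.
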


 \begin{proof}[Proof of Theorem \ref{Lygon1}]
 
 Let us first summarize what we have so far, concerning this proof:
 \begin{itemize}
 \item $\Var\big( W^{(ab)} \big) = \beta n^3 + o(n^3)$, with $\beta\in(0,+\infty)$; see Lemma \ref{Belval}.
 
 \item $ {\displaystyle  \Var\left(  W^{(ab)}  - n  \sum_{i=1}^n \big[ F(a_i) -  F(b_i)\big]  \right) = o(n^3) }$; see Lemma \ref{Lui2}.

 \end{itemize}
 Putting $X_i = \sqrt{2} a_i$ for each $i\in\mathbb{N}$ and $\widetilde{\rho} = 2\rho$, we apply  Theorem \ref{BM-thm} for $d=1$, $f= \Phi -1/2 = \sum_{q\geq 0} \ell_{2q+1}H_{2q+1}$ and we obtain the following CLT:
 \[
\frac{1}{\sqrt{n}} \sum_{k=1}^n\big( F(a_k) - \frac{1}{2} \big) =\frac{1}{\sqrt{n}} \sum_{k=1}^n f(X_k) \xrightarrow[n\to+\infty]{\rm law} N(0, \beta/2) \, ,
\] 
where the limiting variance, due to Breuer-Major's theorem, should be  
$$
\sum_{q= 0}^\infty (2q+1)! \ell^2_{2q+1} \sum_{v\in\mathbb{Z}}  (2\rho(v))^{2q+1} \, ,
$$
which is indeed equal to $\beta/2$ because of $d_{2q+1}^2 = \ell_{2q+1}^22^{2q+1}$ for each integer $q\geq 0$.

\medskip

Thus, we deduce from the above CLT and Slutsky's lemma that 
\[
\frac{1}{\sqrt{n}} \sum_{i=1}^n \big[ F(a_i) -  F(b_i)\big] \xrightarrow[n\to+\infty]{\rm law} N(0, \beta) \quad \text{and} \quad   \frac{W^{(ab)}  -\frac{n^2}{2}  }{n^{3/2}}   \xrightarrow[n\to+\infty]{\rm law} N(0, \beta) \, .
\]
Hence the desired conclusions follow from  the same arguments as in the ending paragraph of {\it case 1}.  

 \end{proof}
 
 To conclude this section, it remains to prove Lemmas \ref{Lui2} and \ref{Belval}.  One may have noticed that we haven't used the relation \eqref{parkville1} in the above proofs. In fact, the relation \eqref{parkville1} and the following Lemma \ref{Esch} together imply the point (1) in Lemma \ref{Lui2}, and besides the independent interest of such a relation, its proof contains some ingredients for our proof of Lemma \ref{Lui2}.

\begin{lemma}\label{Esch} Let ${\it\bfa}, {\it\bfb}, {\it\bfc}$ be i.i.d.~copies of $\{ G_i, i\in\mathbb{N} \}$. Assume that $\rho\in\ell^3(\mathbb{Z})$,  then  {\rm
\begin{align}
\Var\Big(  W^{(ab)} +  W^{(bc)} +  W^{(ca)} \Big) = o(n^3) \,. \label{cfB2}
\end{align}
}
\end{lemma}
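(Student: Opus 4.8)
The plan is to carry out, for the combination $S := W^{(ab)} + W^{(bc)} + W^{(ca)}$, the Hermite/Wiener-chaos computation that produced \eqref{VARH}, and to observe that the dominant $\Theta(n^3)$ contributions cancel identically, leaving only lower-order ``mixed'' terms that $\ell^3$-summability of $\rho$ controls. Since $k_i - k'_j \sim N(0,1)$ for $k \neq k'$, the expansion \eqref{ind_chaos} gives $W^{(kk')} - \tfrac{n^2}{2} = \sum_{q\ge 0} d_{2q+1}\sum_{i,j=1}^n H_{2q+1}(k_i - k'_j)$ in $L^2$, with the $q$-th summand lying in the $(2q+1)$-st Wiener chaos of the Gaussian space generated by $(\bfa,\bfb,\bfc)$; summing the three pairs and using orthogonality of distinct chaoses,
\[
\Var(S) \;=\; \sum_{q\ge 0} d_{2q+1}^2\,\E\big[T_q^2\big],\qquad
T_q := \sum_{i,j=1}^n\big(H_{2q+1}(a_i-b_j)+H_{2q+1}(b_i-c_j)+H_{2q+1}(c_i-a_j)\big).
\]

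Next I would expand $\E[T_q^2]$ into the nine double sums indexed by ordered pairs of $\{ab,bc,ca\}$, evaluated by \eqref{prop221} with $m := 2q+1$. The three ``diagonal'' pairs each give $(2q+1)!\sum_{i,j,i',j'}(\rho(i-i')+\rho(j-j'))^{2q+1}$; for the six ``off-diagonal'' pairs one uses that any two of $ab,bc,ca$ share exactly one letter with opposite sign, so the relevant covariance is $-\rho$ of one index difference and (as $2q+1$ is odd) each such pair contributes $-(2q+1)!\,n^2 R_{2q+1}$, where $R_v := \sum_{|u|<n}(n-|u|)\rho(u)^v$. Writing the diagonal sum as $\sum_{v=0}^{2q+1}\binom{2q+1}{v}R_vR_{2q+1-v}$ with $R_0 := n^2$, the $v\in\{0,2q+1\}$ pieces produce $3\cdot 2n^2R_{2q+1}$, which exactly cancels the $6n^2R_{2q+1}$ from the off-diagonal part. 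Hence
\[
\Var(S) \;=\; 3\sum_{q\ge 1} d_{2q+1}^2 (2q+1)! \sum_{v=1}^{2q}\binom{2q+1}{v} R_v R_{2q+1-v},
\]
which in passing also yields \eqref{parkville2} (up to the factor $\tfrac13$ of \eqref{parkville1}); crucially, only ``mixed'' indices $1\le v\le 2q$ survive.

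To finish, set $\theta_r := \sum_{|u|<n}|\rho(u)|^r$, so $|R_v|\le n\theta_v$. Since $|\rho|\le 1/2$ and $\rho\in\ell^3(\mathbb{Z})$, one has $\theta_r \le 2^{3-r}\|\rho\|_{\ell^3(\mathbb{Z})}^3 = O(1)$ with geometric gain for $r\ge 3$; for $r\in\{1,2\}$ this is not enough, and Lemma~\ref{eq727} (applied to $\widetilde\rho := 2\rho\in\ell^3(\mathbb{Z})$, with its exponent parameter equal to $3$) gives the sharper $\theta_1 = o(n^{2/3})$ and $\theta_2 = o(n^{1/3})$. In each product $R_vR_{2q+1-v}$ with $1\le v\le 2q$ at least one index is $\le q$; splitting according to whether the smaller index is $\ge 3$ or lies in $\{1,2\}$, and treating the exceptional term $6R_1R_2$ at $q=1$ separately, one bounds $|R_vR_{2q+1-v}|\le n^2\theta_v\theta_{2q+1-v}$ by $O(n^2)$ with geometric gain in $q$ when both indices are $\ge 3$, by $n^2\,o(n^{2/3})\cdot O(q^2 2^{-2q})$ when one index is $\le 2$, and by $o(n^3)$ when $q=1$. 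Summing against the weights $d_{2q+1}^2(2q+1)!$ — which are summable, since $\sum_{q\ge 0}d_{2q+1}^2(2q+1)!=1/4$ by \eqref{dueTo} and hence a fortiori $\sum_q d_{2q+1}^2(2q+1)!(2q+1)^2 2^{-(2q+1)}<\infty$ — gives $\Var(S) = o(n^3)+O(n^2)+n^2\,o(n^{2/3}) = o(n^3)$.

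The only structurally delicate point is the exact cancellation of the $n^2R_{2q+1}$ terms; everything leading up to it is bookkeeping of Gaussian covariances. The main technical obstacle is the last step: the naive bound $\theta_v\theta_{2q+1-v}\le\theta_1^2$ combined with $\sum_v\binom{2q+1}{v}=2^{2q+1}$ is too lossy (the resulting $q$-series $\sum_q d_{2q+1}^2(2q+1)!\,4^q$ diverges), so one must exploit the geometric decay of $\theta_r$ in $r$ for $r\ge 3$ to absorb the binomial coefficients over the ``bulk'' range $3\le v\le 2q-2$, and invoke the sharp $\ell^3$ estimate of Lemma~\ref{eq727} only for the few ``edge'' indices $v\in\{1,2,2q-1,2q\}$.
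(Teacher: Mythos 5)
Your proof is correct and follows essentially the same route as the paper's: the Hermite/chaos expansion with \eqref{prop221}, the exact cancellation of the $n^2R_{2q+1}$ contributions between the three diagonal pairs (the $v\in\{0,2q+1\}$ binomial terms) and the six off-diagonal pairs, reducing the variance to the mixed sum \eqref{parkville2}, and then the estimates $\sum_{|u|<n}|\rho(u)|=o(n^{2/3})$, $\sum_{|u|<n}|\rho(u)|^2=o(n^{1/3})$ from Lemma~\ref{eq727} with $d=q=3$. The only (equivalent) difference is the last bookkeeping step: the paper absorbs $\sum_{v}\binom{2q+1}{v}$ with the single prefactor $2^{-1-2q}$ obtained by passing to $\widetilde\rho=2\rho$ and bounding each product by $n^2\bigl(\sum|\widetilde\rho|\bigr)\bigl(\sum|\widetilde\rho|^2\bigr)$ uniformly in $v$ and $q$, whereas you distribute the same geometric factor across $\theta_v\theta_{2q+1-v}$ via a case split on the indices.
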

  \begin{proof} Using Hermite expansion of $x\in\R\lmto \1[x>0]$, we have 
 \begin{align*}
 &\quad W^{(ab)} +  W^{(bc)} +  W^{(ca)}  =  \sum_{i,j=1}^n \big( \1[a_i  > b_j] + \1[b_i  > c_j]  + \1[c_i  > a_j]  \big)  \\
 &=  \frac{3n^2}{2} +  \sum_{q\geq 0} d_{2q+1} \sum_{i,j=1}^n\Big[H_{2q+1}(a_i-b_j) + H_{2q+1}(b_i-c_j)  + H_{2q+1}(c_i-a_j)  \Big]
  \end{align*}
 so that
 \begin{align}
 &    \Var\Big(  W^{(ab)} +  W^{(bc)} +  W^{(ca)} \Big)  = \sum_{q\geq 0} d_{2q+1}^2 (2q+1)!  \times  \notag\\
 &   \sum_{i,j,k,\ell=1}^n \Bigg(  \E[ (a_i - b_j)(a_k - b_\ell) ]^{2q+1} +  \E[ (a_i - b_j)(b_k - c_\ell) ]^{2q+1} \notag \\
  &   +  \E[ (a_i - b_j)(c_k - a_\ell) ]^{2q+1}  +  \E[ (b_i - c_j)(a_k - b_\ell) ]^{2q+1} + \E[ (b_i - c_j)(b_k - c_\ell) ]^{2q+1}  \notag \\
 &    +  \E[ (b_i - c_j)(c_k - a_\ell) ]^{2q+1}   +  \E[ (c_i - a_j)(a_k - b_\ell) ]^{2q+1} \notag \\
 &   +  \E[ (c_i - a_j)(b_k - c_\ell) ]^{2q+1} +  \E[ (c_i - a_j)(c_k - a_\ell) ]^{2q+1} \Bigg) \,.  \notag 
 \end{align}
 Then, using the specific correlation structure of $a, b, c$ as well as their independence, we get 
  \begin{align}
 & \quad \frac{1}{3}\Var\Big(  W^{(ab)} +  W^{(bc)} +  W^{(ca)} \Big) \notag   \\
 &  = \sum_{q\geq 1} d_{2q+1}^2 (2q+1)!   \sum_{i,j,k,\ell=1}^n \Big[   \big( \rho(i-k) + \rho(j-\ell) \big)^{2q+1} -  \rho(i-k)^{2q+1} \notag \\
 &\qquad\qquad\qquad\qquad\qquad\qquad\qquad\qquad -  \rho(j-\ell)^{2q+1} \Big]  \label{sum-here}  \,.
         \end{align}
 Let us now look at the second sum in  \eqref{sum-here}, which can be rewritten using the binomial formula, as follows:
\begin{align}
&\quad \sum_{i,j,k,\ell=1}^n \sum_{v=1}^{2q} {2q+1 \choose v}   \rho(i-k)^{v}   \rho(j-\ell)^{2q+1-v} \notag   \\
& = \sum_{v=1}^{2q} {2q+1 \choose v}  \left(  \sum_{i,k=1}^n   \rho(i-k)^{v}   \right)\left(  \sum_{j,\ell=1}^n   \rho(j-\ell)^{2q+1-v}   \right) \label{label2} \\
& =  \sum_{v=1}^{2q} {2q+1 \choose v}  2^{-1-2q}   \left(  \sum_{\vert i\vert < n} (n- \vert i\vert )   \widetilde{\rho}(i)^{v}   \right)    \left(  \sum_{\vert j\vert < n} (n- \vert j\vert )   \widetilde{\rho}(j)^{2q+1-v}   \right) \notag
\end{align}
by putting $\wt{\rho} = 2 \rho$. It is clear that  the  term  $2^{-1-2q}$ will     compensate the term  $ \sum_{v=1}^{2q} {2q+1 \choose v} $ above. Therefore, we only need  the following rough estimate:  for $q\geq 1$
\begin{align*}
  \sum_{i,j,k,\ell=1}^n & \Big[   \big( \rho(i-k) + \rho(j-\ell) \big)^{2q+1} -  \rho(i-k)^{2q+1} -  \rho(j-\ell)^{2q+1} \Big]  \\
  &=  O\left\{  n^2  \left(  \sum_{\vert i\vert < n}    \vert\widetilde{\rho}(i)\vert   \right)      \left(  \sum_{\vert i\vert < n}    \vert\widetilde{\rho}(i)\vert^2   \right)    \right\} \,,
\end{align*}
implying 
\[
\Var\Big(  W^{(ab)} +  W^{(bc)} +  W^{(ca)} \Big) = O\left\{  n^2  \left(  \sum_{\vert i\vert < n}    \vert\widetilde{\rho}(i)\vert   \right)      \left(  \sum_{\vert i\vert < n}    \vert\widetilde{\rho}(i)\vert^2   \right)    \right\}  \,.
\]
The desired estimate \eqref{cfB2} follows from Lemma \ref{eq727} and the assumption  $\widetilde{\rho}\in\ell^3(\mathbb{Z})$.   

 \end{proof}

 \medskip

\begin{proof}[Proof of Lemma \ref{Lui2}]

  As in previous variance calculations,
  we have 
 \begin{align}
&\quad \Var\left(  W^{(ab)}  - n  \sum_{i=1}^n \big[ F(a_i) -  F(b_i)\big]  \right)\notag\\
&= \Var\big(  W^{(ab)} \big) +  \frac{1}{2} n^4 - 2n^2 \sum_{i,j=1}^n \E\big[ F(a_i) F(a_j) \big] \notag \\
&=  \Var\big(  W^{(ab)} \big) +  \frac{1}{2} n^4 - 2n^2  \left( ~ \frac{1}{3}n + 2 \sum_{1\leq  i < j \leq n}  \E\big[ F(a_i) F(a_j)\big]    \right) \, . \label{label0}
 \end{align}
 
 It follows from Lemma \ref{betan}  that for $i\neq j$, \quad (also due to $d_{2q+1}^2 = \ell_{2q+1}^2 2^{2q+1}$)
 \begin{align} 
  \E\big[ F(a_i) F(a_j)\big]    &=    \E\big[ \Phi(\sqrt{2}a_i) \Phi(\sqrt{2}a_j)  \big]  \notag  \\
  &=   \frac{1}{4} + \sum_{q\geq 0} d_{2q+1}^2 (2q+1)!  \rho(i-j)^{2q+1} \,.  \label{label1}
            \end{align}
 Therefore, it is routine to verify using \eqref{VARH}, \eqref{label0}, \eqref{label1},\eqref{label2} and \eqref{sum-here} that
 \begin{align}
  &\quad \Var\left(  W^{(ab)}  - n  \sum_{i=1}^n \big[ F(a_i) -  F(b_i)\big]  \right)  \label{yoyoq}\\
  & = \sum_{q\geq 1} d_{2q+1}^2 (2q+1)! \sum_{v=1}^{2q} {2q+1 \choose v}  \left(  \sum_{\vert i\vert < n}  (n - \vert i\vert )\rho(i)^v \right)  \notag \\
  &\qquad  \times \left(   \sum_{\vert j\vert < n}  (n - \vert j\vert )\rho(j)^{2q+1-v} \right)     =   \frac{1}{3} \Var\big( W^{(ab)} +  W^{(bc)} +  W^{(ca)} \big)  \,. \notag
  \end{align}
Therefore, the relations \eqref{parkville1} and  \eqref{parkville2} are established.  If $\rho\in\ell^3(\mathbb{Z})$, Lemma \ref{Esch} implies that the variance in \eqref{yoyoq} is $o(n^3)$.

\medskip

To prove point (2), we consider the particular case where $\rho = s_H$. One can easily verify using the asymptotic relation \eqref{cor-est} that $s_H\in\ell^3(\mathbb{Z})$ if and only if $H\in(0, 5/6)$.  Now suppose that $H\in[ 5/6, 1)$, the relation \eqref{parkville2} still holds true, that is, we have  
 \begin{align*}
& \quad \Var\left(  W^{(ab)}  - n  \sum_{i=1}^n \big[ F(a_i) -  F(b_i)\big]  \right)  \\
& =  \frac{1}{2\pi}  \left(  \sum_{\vert i\vert < n}  (n - \vert i\vert )s_H(i) \right)   \left(   \sum_{\vert j\vert < n}  (n - \vert j\vert )s_H(j)^{2} \right)  \\
&\qquad +  \sum_{q\geq 2} d_{2q+1}^2 (2q+1)! \sum_{v=1}^{2q} {2q+1 \choose v}  \left(  \sum_{\vert i\vert < n}  (n - \vert i\vert )s_H(i)^v \right) \\ 
&\qquad\qquad\qquad\qquad\qquad\times \left(   \sum_{\vert j\vert < n}  (n - \vert j\vert )s_H(j)^{2q+1-v} \right) \,.  
  \end{align*}
One can readily check using  \eqref{cor-est} that for $H\in[5/6, 1)$, 
 \[
  \sum_{\vert i \vert < n}  \big(n - \vert i\vert \big) s_H(i)  \sim \frac{1}{2}n^{2H} \,
 \quad
 \text{and }
 \quad
  \sum_{\vert i \vert < n}  \big(n - \vert i\vert \big) s_H(i)^2 \sim \frac{H^2(2H-1)}{4(4H-3)} n^{4H-2}  \,,
 \]
 and 
 \begin{align*}
  \sum_{\vert i \vert < n}  \big(n - \vert i\vert \big) s_H(i)^3
   \sim
    \begin{cases}
   \dfrac{H^3(2H-1)^3}{8(6H-5)(3H-2)} n^{6H-4}  \quad & \text{if $H\in(5/6, 1)$} \\
   \quad\\
    2 (5/18)^3  n \log n     & \text{if $H=5/6$.}
   \end{cases}
 \end{align*}
All these estimates  imply, whenever $H\in[ 5/6, 1)$,
 \begin{align*}
 \Var\Big(  W^{(ab)}  - n  \sum_{i=1}^n \big[ F(a_i) -  F(b_i)\big]  \Big)&  =   \frac{1}{3} \Var\big( W^{(ab)} +  W^{(bc)} +  W^{(ca)} \big) \\
 & \sim \frac{H^2(2H-1)}{16\pi (4H-3)} n^{6H-2} \,.
 \end{align*}
Hence the proof of Lemma \ref{Lui2} is completed.

 \end{proof}

 \bigskip

 \begin{proof}[Proof of Lemma \ref{Belval}]  Assume first that $\rho\in\ell^1(\mathbb{Z})$ and  recall from \eqref{VARH} that
 \[
 \Var\big( W^{(ab)} \big) =  \sum_{q\geq 0}d_{2q+1}^2 (2q+1)!  \sum_{i,j,k,\ell =1}^n  \big( \rho(i-k) + \rho(j-\ell) \big)^{2q+1}  
 \]
 and in view of \eqref{sum-here}, we have
 \begin{align}
 \Var\big( W^{(ab)} \big)  = 2 \sum_{q\geq 0}d_{2q+1}^2 (2q+1)!  \sum_{i,j,k,\ell = 1}^n  \rho(i-k)^{2q+1}  +o(n^3) \, . \label{suf-est}
 \end{align}
 The second sum in \eqref{suf-est} is equal to $n^2 \sum_{\vert i \vert < n} (n-\vert i\vert) \rho(i)^{2q+1} $. Since $\rho\in\ell^1(\mathbb{Z}) $ and for $q\geq 0$,
 \[
\lim_{n\to+\infty} \sum_{\vert i \vert < n} \frac{ n-\vert i\vert}{n} \rho(i)^{2q+1}  = \sum_{i\in\mathbb{Z}} \rho(i)^{2q+1} \quad\text{by \it dominated convergence.}
 \]
Therefore, as $n\to+\infty$,
\begin{align*}
& \quad n^{-3} \sum_{q\geq 0}d_{2q+1}^2 (2q+1)!  \sum_{i,j,k,\ell = 1}^n  \rho(i-k)^{2q+1} \\
&=  \sum_{q\geq 0}d_{2q+1}^2 (2q+1)!  \sum_{\vert i \vert < n} \frac{ n-\vert i\vert}{n} \rho(i)^{2q+1}\to  \sum_{q\geq 0}d_{2q+1}^2 (2q+1)! \sum_{i\in\mathbb{Z}} \rho(i)^{2q+1},
\end{align*}
so that  
$
 \Var\big( W^{(ab)} \big) =  \beta n^3 + o(n^3)
$.    
Note that $\beta\in[0,+\infty)$ under  the assumption $\rho\in\ell^1(\mathbb{Z})$ is an easy consequence of Theorem \ref{BM-thm}. It is clear that $\widetilde{\rho} = 2\rho$ satisfies the assumption of Theorem \ref{BM-thm}, then using $d_{2q+1}^2 = \ell_{2q+1}^2 2^{2q+1}$, we get
\[
\frac{1}{2} \beta =  \sum_{q\geq 0}d_{2q+1}^2 (2q+1)! 2^{-1-2q} \sum_{i\in\mathbb{Z}}  \widetilde{\rho}(i)^{2q+1} =  \sum_{q\geq 0}\ell_{2q+1}^2 (2q+1)!   \sum_{i\in\mathbb{Z}}  \widetilde{\rho}(i)^{2q+1}  \,.
\]
So, with $f(x) = \Phi(x) - \frac{1}{2}$ and $d=1$, one can see that $\beta\in[0,+\infty)$. 

\bigskip

Now let us look at the fractional case, and note   that the case $H=1/2$ was stated in \eqref{28hou}.

 If $H < 1/2$, then $s_H$  is summable so that  ${\displaystyle \sum_{i\in\mathbb{Z}} s_H(i)}$ is finite, which is the limit of  
\[
  \sum_{\vert k\vert \leq n}  s_H(k) = \frac{1}{4}   \sum_{\vert k\vert \leq n} \big( \vert k+1\vert^{2H}  + \vert k-1\vert^{2H}- 2 \vert k \vert^{2H} \big) = \frac{1}{2}   \big( \vert n+1\vert^{2H} - \vert n \vert^{2H} \big)
\]
as $n\to+\infty$. This limit is zero.  For later reference, we summarize some basic properties of $s_H$ for $H\in(0, 1/2)$:
\begin{align} \label{centerf}
\quad \text{ $s_H(0) = \dfrac{1}{2}$,  $  - \dfrac{1}{2} < s_H(v) < 0$ for $v\neq 0$;  and ${\displaystyle \sum_{v\in\mathbb{Z}} s_H(v) = 0}$.}
\end{align}
It follows that for $q\geq 1$, from
$$
   1 = 2s_H(0) =  \sum_{v\neq 0} \big[  -2s_H(v)  \big] >  \sum_{v\neq 0} \big[  -2s_H(v)  \big]^{2q+1}
$$
we obtain
 $\sum_{i\in\mathbb{Z}} s_H(i)^{2q+1} \in (0,+\infty)$.
Thus,   point (2)-(i) is proved.

\medskip
  
 If $H\in(1/2, 1)$, then $s_H(v) > 0$. One can verify by using \eqref{label0}, \eqref{label1} and the fact $1/6 = \sum_{q=0}^\infty   d_{2q+1}^2 (2q+1)! 2^{-2q}$ from Remark \ref{obs11} that
    \begin{align*}
 \Var\big( W^{(ab)} \big)  & = \Var\left(  W^{(ab)}  - n  \sum_{i=1}^n \big[ F(a_i) -  F(b_i)\big]  \right)   \\
 &\qquad\qquad\qquad + 2n^2 \sum_{q\geq 0}     d_{2q+1}^2 (2q+1)!  \sum_{i,j\in[n]  }   s_H(i-j)^{2q+1}  \,.
  \end{align*}
 The first term in the above sum  is of order $o(n^{2H+2})$, by Lemma \ref{Lui2}.     It remains to use \eqref{cor-est} to estimate the second term in the above sum:   
\[
 2n^2d_0^2 \sum_{i,j\in[n]  }   s_H(i-j) \sim \frac{1}{2\pi} n^{2H+2}
 \] 
 gives the dominant contribution. Hence     our  proof of  Lemma  \ref{Belval} is now completed. 

\end{proof}

\section{Condorcet paradox for close elections: Majority}
\label{sec:condorcet}

This section contains the proof of Theorem~\ref{thm:elections-random}.

\subsection{Notation}
We start with recalling and extending the model and notation.
There are $n$ voters (where $n$ is odd) and each of them independently
chooses one of $k!$ rankings of the alternatives uniformly at random.
For voter $i$, such a random ranking gives rise to a random tuple
$x_i =  (x_i^{(1)}, \ldots, x_i^{(K)} )$
in $\{-1, 1\}^{K}$ representing
$K := \binom{k}{2}$
pairwise choices (according to some fixed ordering of pairs).
We call each of $k!$ tuples in the support of $x_i$ \emph{transitive}.
Any other tuple is \emph{intransitive}. We say that a tuple has
a \emph{Condorcet winner} if it has an alternative that beats everyone else.

We denote aggregation over voters by boldface. Therefore, we write
$\mathbf{x} = (x_1, \ldots x_n)$ for the random vector of voter
preferences (where each element is itself a random tuple of length $K$).

For $j=1,\ldots,K$,  let $S_i^{(j)} := \sum_{i'=1}^i x^{(j)}_{i'}$ and $S^{(j)} := S_n^{(j)}$,
and write
\begin{align*}
  Y^{(j)} = \Maj_n(\mathbf{x}^{(j)}) = \sgn(S^{(j)}) \; . 
\end{align*}
Furthermore, we write $Y = \left(Y^{(1)}, \ldots, Y^{(K)}\right)$
and $S = \left(S^{(1)}, \ldots, S^{(K)}\right)$ for the aggregated tuples.

Given voter preferences, we say that the voting
outcome is intransitive if the aggregated tuple $Y$
is intransitive. Similarly, we say that there is a Condorcet winner if
tuple $Y$ has a Condorcet winner.

We are interested in situations where elections are ``almost tied''
or, more precisely, ``$d$-close'' for $d \ge 1$.
Specifically, we define $\cE_d$ to be the event
where $\|S\|_{\infty} \le d$, i.e.,
$|S^{(j)}|$ is at most $d$ for every $j \in [K]$.

\subsection{Local CLT}

We use a theorem and some definitions from the textbook on random
walks by Spitzer \cite{Spi76}. In accordance with the book, we make
\begin{definition}
  A $k$-dimensional random walk $(X_i)_{i \in \mathbb{N}}$
  is a Markov chain over $\mathbb{Z}^k$ with $X_0 = 0^k$
  and a distribution of one step $Z_{i+1} := X_{i+1} - X_i$ that
  does not depend on $i$.
\end{definition}

Defining $S_i:=(S_i^{(1)},\ldots,S_i^{(K)})$, note that $\left(S_i\right)_{i \in \{0,\ldots,n\}}$
is a $K$-dimensional random walk
and that we want to calculate $\Pr(\sgn(S_n)=y|\cE_d)$, for  $y\in\{-1,1\}^K$.
  There is one technicality we need to address to apply a local CLT:  since the
  steps of our random walk are in $\{-1, 1\}^K$,
  the values of $(S_i)$ lie on a proper sublattice of $\mathbb{Z}^K$, namely,
  $S_i^{(j)}$ always has the same parity as $i$. To deal with this, we define
  $T_i^{(j)} := (S^{(j)}_{2i+1}-1)/2$. Note that $\left(T_i\right)$
  is still a $K$-dimensional random walk, with one catch:
  the starting point
  $T_0$ is not necessarily the origin, but rather one of $k!$
  points in $\{-1, 0\}^K$ corresponding to the transitive tuple picked
  by the first voter.

  Before we state the local CLT, we need another definition:
  \begin{definition}[\cite{Spi76}, D1 in Section 5]
    A $K$-dimensional random walk is \emph{strongly aperiodic} if
    for every $t \in \mathbb{Z}^K$, the subgroup of $\mathbb{Z}^K$ generated by the points
    that can be reached from $t$ in one step is equal to $\mathbb{Z}^K$.
  \end{definition}

  Now we are ready to state the theorem:
  \begin{theorem}[Local CLT, Remark after P9 in Section 7 of \cite{Spi76}]
    \label{thm:lclt}
    Let $\left(T_i\right)_{i \in \mathbb{N}}$ be a strongly aperiodic
    $K$-dimensional random walk,
    starting at origin and with a single step $Z$, i.e.,
    $T_{i+1}-T_i$ distributed according to $Z$.

    If $\EE[Z] = 0^K$ and $Q$ is the $K\times K$
    (finite) covariance matrix of $Z$, then matrix $Q$ is invertible and
    for every $t \in \mathbb{Z}^K$,
    \begin{align*}
      \left|
      \left(2\pi n\right)^{K/2}
      \Pr\left[T_n = t\right]
      - |Q|^{-1/2} \exp\left(\frac{-t^T Q^{-1} t}{2n} \right)
      \right| = o(1) \; ,
    \end{align*}
    where the $o(1)$ function depends on $n$, but not on $t$.
  \end{theorem}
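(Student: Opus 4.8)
The plan is to carry out the classical characteristic-function proof of a lattice local limit theorem, being careful to keep the error term uniform in $t$. Write $\phi(\theta):=\EE\big[e^{i\langle\theta,Z\rangle}\big]$ for the characteristic function of a single step $Z$, viewed as a function on the torus $\mathbb{T}^K:=[-\pi,\pi]^K$. Since $T_n$ is the sum of $n$ i.i.d.\ copies of the $\mathbb{Z}^K$-valued $Z$, its characteristic function is $\phi^n$, and Fourier inversion on $\mathbb{Z}^K$ gives, for every $t\in\mathbb{Z}^K$, $\Pr[T_n=t]=(2\pi)^{-K}\int_{\mathbb{T}^K}\phi(\theta)^n e^{-i\langle\theta,t\rangle}\,d\theta$. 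All of the work is in analysing this integral after rescaling $\theta$ by $n^{-1/2}$.

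Before rescaling I would record two consequences of strong aperiodicity. First, $\supp(Z)$ is not contained in any linear hyperplane of $\mathbb{R}^K$ (else the subgroup it generates would have rank $<K$), so $\EE[Z]=0^K$ makes $Q=\EE[ZZ^T]$ positive definite, hence invertible — this is the first assertion of the theorem. Second, strong aperiodicity is equivalent to $|\phi(\theta)|<1$ for all $\theta\in\mathbb{T}^K\setminus\{0\}$: if $|\phi(\theta_0)|=1$ then $e^{i\langle\theta_0,z\rangle}$ is a.s.\ a constant on $\supp(Z)$, i.e.\ $\langle\theta_0,z\rangle$ is constant modulo $2\pi$ there, and choosing $t\in\mathbb{Z}^K$ so that this constant becomes $0$ one checks that the points reachable from $t$ in one step all lie in the proper subgroup $\{x\in\mathbb{Z}^K:\langle\theta_0,x\rangle\in 2\pi\mathbb{Z}\}$, contradicting strong aperiodicity. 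With this in hand, compactness gives, for each $\delta>0$, a constant $c=c(\delta)>0$ with $|\phi(\theta)|\le1-c$ on $\{\theta\in\mathbb{T}^K:|\theta|\ge\delta\}$; and the Taylor expansion $\phi(\theta)=1-\tfrac12\theta^T Q\theta+o(|\theta|^2)$ together with $Q\succ0$ lets me shrink $\delta$ so that also $|\phi(\theta)|\le\exp(-c'|\theta|^2)$ on $\{|\theta|\le\delta\}$ for some $c'>0$. Then I would substitute $\theta=\psi/\sqrt n$, write $\tau:=t/\sqrt n$, and split the integral at $|\psi|=\delta\sqrt n$. The outer part is at most $n^{K/2}\int_{|\theta|\ge\delta}|\phi(\theta)|^n\,d\theta\le(2\pi)^K n^{K/2}(1-c)^n=o(1)$, which does not depend on $t$.

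For the inner part I would compare $\int_{|\psi|\le\delta\sqrt n}\phi(\psi/\sqrt n)^n e^{-i\langle\psi,\tau\rangle}\,d\psi$ with $\int_{\R^K} g(\psi)e^{-i\langle\psi,\tau\rangle}\,d\psi$, where $g(\psi):=\exp(-\tfrac12\psi^T Q\psi)$; the latter equals $(2\pi)^{K/2}|Q|^{-1/2}\exp(-\tfrac12\tau^T Q^{-1}\tau)$ by the exact Gaussian Fourier transform, and since $\tau^T Q^{-1}\tau=t^T Q^{-1}t/n$ this is precisely the stated main term. Their difference is bounded in absolute value by $\int_{|\psi|\le\delta\sqrt n}\big|\phi(\psi/\sqrt n)^n-g(\psi)\big|\,d\psi+\int_{|\psi|>\delta\sqrt n}g(\psi)\,d\psi$, and this is the crucial point for uniformity: after taking absolute values the oscillating factor $e^{-i\langle\psi,\tau\rangle}$ disappears, so the bound does not involve $\tau$, i.e.\ does not involve $t$. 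The second integral tends to $0$ because $g$ is integrable; for the first, $n\log\phi(\psi/\sqrt n)\to-\tfrac12\psi^T Q\psi$ pointwise and the domination $|\phi(\psi/\sqrt n)^n-g(\psi)|\le\exp(-c'|\psi|^2)+g(\psi)$ on $|\psi|\le\delta\sqrt n$ (independent of $n$, integrable) allows dominated convergence. Assembling the outer and inner estimates yields the claimed $o(1)$, uniform in $t$. The routine parts are the Taylor expansion of $\log\phi$ and the computation of the Gaussian Fourier transform; the main obstacle is the equivalence between strong aperiodicity and $|\phi|<1$ away from the origin, and the bookkeeping that arranges the near-origin domination so that the error stays uniform in $t$ — which is exactly where the hypotheses ($Z$ lattice-valued, mean zero, nondegenerate covariance) are used.
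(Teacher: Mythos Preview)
Your argument is correct and is essentially the classical Fourier-inversion proof of the lattice local CLT. One small point worth making explicit: when you write ``choosing $t\in\mathbb{Z}^K$ so that this constant becomes $0$,'' the existence of such a $t$ follows because $\alpha\equiv\langle\theta_0,z_0\rangle\pmod{2\pi}$ for any fixed $z_0\in\supp(Z)\subseteq\mathbb{Z}^K$, so $t=-z_0$ works; you might say so.

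As for comparison with the paper: the paper does not prove this theorem. It is quoted verbatim as an external result from Spitzer's textbook (the remark after P9 in Section~7) and then applied as a black box to establish Lemma~\ref{lem:t-uniform}. So there is no ``paper's own proof'' to compare against; your write-up is, in effect, a self-contained replacement for the citation. For what it is worth, the argument you give is the same one Spitzer uses.
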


  Our main lemma states that
  the distribution of $T_n$ conditioned on $\|T_n\|_{\infty}$
  being small is roughly uniform.

  \begin{lemma}
    \label{lem:t-uniform}
    For the random walk $\left(T_i\right)$ defined above and
    $t \in \mathbb{Z}^K, d \ge 1$ such that $\|t\|_{\infty} \le d$,
    there are some $\alpha_k, \beta_k > 0$ such that
    \begin{align}
    \left| \alpha_k n^{K/2} \Pr\left[ T_n = t \right] - 1 \right| \le
      \beta_k \frac{d^2}{n}  + o_k(1) \; .
      \label{eq:22}
    \end{align}
  \end{lemma}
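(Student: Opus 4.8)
The plan is to derive the lemma from the local CLT, Theorem~\ref{thm:lclt}, after correcting for the fact that the walk $(T_i)$ does not start at the origin. This walk makes $m := (n-1)/2 = \Theta(n)$ steps before reaching the terminal state written $T_n$ in the lemma; its $i$-th step $Z$ equals $\tfrac12(x_{2i}+x_{2i+1})$, the average of two independent voters' transitive tuples, while its starting point $T_0$ is one of $k!$ points in $\{-1,0\}^K$ determined by voter $1$. Since $T_0$ and $T_n-T_0$ depend on disjoint sets of voters they are independent, so conditioning on $T_0$ gives
\begin{align*}
  \Pr[T_n = t] = \frac{1}{k!}\sum_{s_0}\Pr[T_n - T_0 = t - s_0],
\end{align*}
the sum over the $k!$ possible starting points $s_0$, each with $\|s_0\|_\infty \le 1$, and where $T_n-T_0$ is an origin-started random walk with step $Z$.

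Next I would check that this origin-started walk satisfies the hypotheses of Theorem~\ref{thm:lclt}. Reversing a uniformly random ranking yields a uniformly random ranking and flips every pairwise comparison, so $\EE[x_i]=0^K$, hence $\EE[Z]=0^K$; the covariance matrix $Q$ of $Z$ is finite since $Z$ is bounded, and Theorem~\ref{thm:lclt} then supplies its invertibility. For strong aperiodicity, note first that $0^K\in\supp(Z)$ (take two voters with mutually reversed rankings), so for every $t\in\mathbb{Z}^K$ the subgroup generated by $\{t+z : z\in\supp(Z)\}$ contains $t$, hence contains every $z\in\supp(Z)$; thus it suffices to show that $\supp(Z)$ generates $\mathbb{Z}^K$. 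Every transitive tuple $x$ lies in $\supp(Z)$ (let both voters choose it), and if $x'$ comes from the ranking underlying $x$ by transposing two adjacent alternatives, then $x$ and $x'$ agree in all but one pairwise comparison, so $\tfrac12(x+x')\in\supp(Z)$ and $x - \tfrac12(x+x') = \tfrac12(x-x') = \pm e_j$. Ranging over all pairs of alternatives produces every standard basis vector, so $\supp(Z)$ generates $\mathbb{Z}^K$ and the walk is strongly aperiodic.

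Then I would invoke Theorem~\ref{thm:lclt} to get, uniformly in $u\in\mathbb{Z}^K$,
\begin{align*}
  \Pr[T_n - T_0 = u] = (2\pi m)^{-K/2}\left( |Q|^{-1/2}\exp\!\left(\frac{-u^{T} Q^{-1} u}{2m}\right) + o(1) \right),
\end{align*}
and specialize to $u = t - s_0$. For $\|t\|_\infty\le d$ and $\|s_0\|_\infty\le 1$ we have $\|u\|_2\le\sqrt{K}\,(d+1)$, so $u^{T}Q^{-1}u = O_k(d^2)$ (using $d\ge 1$), and since $0\le 1-e^{-x}\le x$ the exponential equals $1+O_k(d^2/m) = 1+O_k(d^2/n)$. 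Summing over the $k!$ values of $s_0$, multiplying by $n^{K/2}$, and using $(n/(2\pi m))^{K/2}\to\pi^{-K/2}$ as $m=(n-1)/2\to\infty$ while the bracketed term stays $O_k(1)$, I obtain
\begin{align*}
  n^{K/2}\Pr[T_n = t] = \pi^{-K/2}|Q|^{-1/2}\bigl(1 + O_k(d^2/n)\bigr) + o_k(1).
\end{align*}
Taking $\alpha_k := \pi^{K/2}|Q|^{1/2}$ (which depends only on $k$) and letting $\beta_k$ absorb the implied constant yields \eqref{eq:22}; in the regime where $d^2/n$ is bounded below, \eqref{eq:22} holds trivially, since the same estimate shows the left-hand side of \eqref{eq:22} is $O_k(1)$.

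The main obstacle is the verification of strong aperiodicity — identifying the pair of nearly-equal transitive tuples whose half-difference is a single basis vector — together with the bookkeeping needed to pass from the origin-started walk of Theorem~\ref{thm:lclt} to the walk $(T_i)$ with its $k!$ non-origin starting points; once these are in place, the estimate $e^{-x}=1+O(x)$ finishes the argument.
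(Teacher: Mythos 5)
Your proof is correct and follows essentially the same route as the paper's: verify the hypotheses of the local CLT (mean zero, finite covariance, strong aperiodicity obtained from pairs of rankings whose half-sum shifts a single coordinate), apply the theorem, and convert the Gaussian factor to $1+O_k(d^2/n)$ via $|e^{-x}-1|\le x$. Your explicit average over the $k!$ starting points and the adjacent-transposition argument for aperiodicity are if anything cleaner than the paper's (which conditions on $T_0=0^K$ and exhibits one concrete example for $k=5$), and the only discrepancy --- reading the lemma's $T_n$ as the state after $(n-1)/2$ steps rather than after $n$ steps of the $T$-walk --- merely rescales the unspecified constant $\alpha_k$.
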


  \begin{proof}
    We first deal with the technicality that we mentioned before:
    the starting point $T_0$ of the random walk
    is itself a random variable. In the proof below
    we proceed by conditioning on $T_0 = 0^K$. After reading the proof it should
    be clear how to modify it for other starting points in $\{-1, 0\}^K$.
    \eqref{eq:22} is obtained from those conditional results by triangle
    inequality.
    
    We need to check that the random walk $\left(T_i\right)$ satisfies
    hypothesis of Theorem~\ref{thm:lclt}. First, note that the ``step''
    random variable $Z$ for $(T_i)$ has the same distribution as $(X_1+X_2)/2$,
    i.e., two steps of our original random process.

    Clearly, $\EE[Z] = (\EE[X_1] + \EE[X_2])/2 = 0^K$. Equally clearly, all
    covariances in the matrix $Q$ are finite.

    To show that $(T_i)$ is strongly aperiodic, let
    $(e^{(1)}, \ldots, e^{(K)})$ be the standard basis of $\mathbb{Z}^K$.
    Note that it is enough to show that for each $z \in \mathbb{Z}^K$,
    all of $z, z+e^{(1)}, \ldots, z+e^{(K)}$ are reachable from $z$ in one step.
    But this is so:
    \begin{itemize}
    \item It is possible to stay at $z$ by choosing a permutation (ranking) $\tau$
      for $X_1$
      and then its reverse $\tau^R$ for $X_2$.
    \item We explain how one can move from $z$ to $z+e^{(j)}$ on an
      example and hope it is clear how to generalize it.
      For $k=5$ and $e^{(j)}$ corresponding
      to the $b$ vs.~$d$ comparison, one can choose a ranking
      $b > d > a > c > e$ for $X_1$ followed by $e > c > a > b > d$ for $X_2$.
    \end{itemize}

    Since Theorem~\ref{thm:lclt} applies, we have
    \begin{align*}
      \left|
      \left(2\pi n\right)^{K/2}
      \Pr\left[T_n = t\right]
      - |Q|^{-1/2} \exp\left(-t^T Q^{-1} t / 2n \right)
      \right| = o_k(1) \; ,
    \end{align*}
    which can be rewritten as
    \begin{align*}
      \left|
      \alpha_k  n^{K/2}
      \Pr\left[T_n = t\right]
      - \exp\left(-t^T Q^{-1} t / 2n \right)
      \right| = o_k(1) \; .
    \end{align*}
    Since $1-x \le \exp(-x) \le 1$ for $x \ge 0$, it follows that
    \begin{align*}
      \left|
      \alpha_k n^{K/2}
      \Pr\left[T_n = t\right]
      - 1
      \right| \le \frac{t^T Q^{-1} t}{2n} +  o_k(1) \; .
    \end{align*}
    Finally we observe that $t = d t'$ for some $t'$ with
    $\|t'\|_{\infty} \le 1$, so we have
    \begin{align*}
      \frac{t^T Q^{-1}t}{2n} \le \beta_k \frac{d^2}{n} \; , 
    \end{align*}
    as we needed.
  \end{proof}

Lemma~\ref{lem:t-uniform} implies:
\begin{corollary}
  \label{cor:s-uniform}
  Let $n$ be odd, $d \ge 1$ and $s \in (2\mathbb{Z}+1)^K$ be a tuple such that
  $\|s\|_{\infty} \le d$. Then for some $\alpha_k, \beta_k > 0$,
  \begin{align*}
    \left| \alpha_k \left(n-1\right)^{K/2} \Pr\left[ S = s \right] - 1 \right| \le
    \beta_k \frac{d^2}{n}  + o_k(1) \; .
  \end{align*}
\end{corollary}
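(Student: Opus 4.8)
The plan is to obtain Corollary~\ref{cor:s-uniform} from Lemma~\ref{lem:t-uniform} by a change of the running index that exactly undoes the lattice shift $T_i^{(j)}=(S_{2i+1}^{(j)}-1)/2$. Since $n$ is odd, I would write $n=2m+1$ with $m=(n-1)/2$, so that $T_m^{(j)}=(S_n^{(j)}-1)/2=(S^{(j)}-1)/2$ for every $j\in[K]$. Given $s\in(2\mathbb{Z}+1)^K$, set $t^{(j)}:=(s^{(j)}-1)/2$; because each $s^{(j)}$ is odd, $t:=(t^{(1)},\ldots,t^{(K)})\in\mathbb{Z}^K$, and the events $\{S=s\}$ and $\{T_m=t\}$ are literally the same event. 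Next I would check the hypothesis of Lemma~\ref{lem:t-uniform} (applied with the number of steps equal to $m$ rather than $n$): from $\|s\|_\infty\le d$ we get $\|t\|_\infty\le (d+1)/2\le d$, using $d\ge1$, so the lemma applies with the same value $d$.

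Lemma~\ref{lem:t-uniform} then gives, for some $\alpha_k,\beta_k>0$ depending only on $k$,
\[
\left|\alpha_k m^{K/2}\Pr\left[S=s\right]-1\right|\le\beta_k\frac{d^2}{m}+o_k(1),
\]
where the $o_k(1)$ term is a priori a function of $m$ but, since $m\to\infty$ as $n\to\infty$, is still $o_k(1)$ in $n$. It then remains to rewrite this in terms of $n$: since $m^{K/2}=2^{-K/2}(n-1)^{K/2}$, absorbing $2^{-K/2}$ into the constant yields the leading term $\alpha_k(n-1)^{K/2}\Pr[S=s]$, and since $d^2/m=2d^2/(n-1)\le 4d^2/n$ for $n\ge2$, replacing $\beta_k$ by $4\beta_k$ produces the claimed inequality with constants of the same form.

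I do not expect a genuine obstacle here: all of the real work — strong aperiodicity of $(T_i)$, the local CLT input of Theorem~\ref{thm:lclt}, and the reduction from the random starting point $T_0\in\{-1,0\}^K$ to the origin — is already carried out in Lemma~\ref{lem:t-uniform}. The two points that merely require care are (i) verifying that the reindexing $n\mapsto m=(n-1)/2$ is exactly compatible with the parity-sublattice shift, so that $\{S=s\}$ and $\{T_m=t\}$ coincide as events (and that $s\in(2\mathbb{Z}+1)^K$ is precisely the condition making $t$ integral), and (ii) the straightforward bookkeeping of the constants $\alpha_k,\beta_k$ and of the change of the $o_k(1)$ variable from $m$ to $n$.
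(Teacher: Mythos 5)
Your proposal is correct and is essentially identical to the paper's own proof: the paper likewise sets $t:=(s-1^K)/2$, observes $\Pr[S_n=s]=\Pr[T_{(n-1)/2}=t]$ with $\|t\|_\infty\le d$, and invokes Lemma~\ref{lem:t-uniform}. Your extra bookkeeping (the bound $\|t\|_\infty\le(d+1)/2\le d$, the $2^{-K/2}$ factor, and the passage from $d^2/m$ to $d^2/n$) is just a more explicit version of what the paper leaves implicit.
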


\begin{proof}
  Letting $t := (s - 1^K)/2$, note that
  $\Pr[S_n = s] = \Pr\left[T_{(n-1)/2} = t\right]$ and that $\|t\|_{\infty} \le d$.
  We get the result by applying
  Lemma~\ref{lem:t-uniform}.
\end{proof}

\subsection{Proof of Theorem~\ref{thm:elections-random}}

Recall that we want to prove \eqref{eq:20}, that is
\begin{align*}
  \left| \Pr \left[ Y = y \mid \cE_d \right] - \frac{1}{2^K} \right|
  \le \alpha_k \frac{d^2}{n} + o(1) \; .
\end{align*}
After we have \eqref{eq:20}, the bounds
\eqref{eq:24} and \eqref{eq:25} easily follow 
by triangle inequality.

For $y \in \{-1,1\}^K$, let
$\cS_y := \big\{ s \in (2\mathbb{Z}+1)^K: 
  \bigwedge_{j \in [K]} \sgn\left(s^{(j)}\right) = y^{(j)} \land
\|s\|_{\infty} \le d \big\}$. Observe that
$\Pr[ Y = y \land \cE_d] = \sum_{s \in \cS_y} \Pr[S = s]$. Furthermore, note that
$|\cS_y| = |\cS_{y'}|$ for every $y, y'$. Set $M := |\cS_y|$ as the
common cardinality of the $\cS_y$ sets.

First, we use Corollary~\ref{cor:s-uniform} to show that the probability
$\Pr[Y = y \mid \cE_d]$ must be close
to $q := \frac{1}{\alpha_k (n-1)^{K/2}} \cdot \frac{M}{\Pr[\cE_d]}$,
where $\alpha_k$ is the constant from Corollary~\ref{cor:s-uniform}:
\begin{align*}
  \left| \frac{\Pr[Y = y \mid \cE_d]}{q} - 1 \right|
  &=
  \left| \frac{\alpha_k(n-1)^{K/2}\Pr[\cE_d]}{M}
  \cdot \Pr[Y = y \mid \cE_d]
    - 1 \right| \\
  &=  
    \left| \frac{\alpha_k(n-1)^{K/2}}{M} \cdot
    \sum_{s \in \cS_y} \Pr[S = s]
    - 1 \right|
  \\ &\le
  \frac{1}{M} \sum_{s \in \cS_y}
  \left| \alpha_k(n-1)^{K/2}\Pr[S = s] - 1 \right|
  \le \beta_k \frac{d^2}{n} + o(1) \; .     
\end{align*}
The value of $q$ depends on $k$, $n$ and $d$, but not on $y$.
The implication is that the conditional probabilities must be almost equal
for every pair $y, y'$:
\begin{align*}
  \Big| \Pr[Y = y \mid \cE_d] - \Pr[Y = y' \mid \cE_d] \Big|
  &\le
  \Big| \Pr[Y = y \mid \cE_d] - q \Big| + \Big|q-\Pr[Y=y'\mid\cE_d]\Big|
  \\ &\le
  2q \left(\beta_k \frac{d^2}{n} + o(1) \right)
  \le \beta'_k\frac{d^2}{n} + o(1) \; .     
\end{align*}
But this is all we need, since
\begin{align*}
  \left| \Pr[Y = y \mid \cE_d] - \frac{1}{2^K} \right|
  &\le
  \frac{1}{2^K} \sum_{y' \in \{-1,1\}^K}
  \Big| \Pr[Y = y\mid \cE_d] - \Pr[Y=y'\mid\cE_d]  \Big|
  \\ &\le  
  \beta_k \frac{d^2}{n} + o(1) \; .
\end{align*}
\qed

\begin{remark}
  A similar bound with an explicit $o(1)$ term of the order
  $O_k \big(\frac{d}{\sqrt{n}} \big) + O_k \big(\frac{n^{K/2-1}}{d^K} \big)$
  (implying chaotic behavior for
  $n^{1/2-1/K} \ll d \ll n^{1/2}$)
  can be achieved
  using the multidimensional Berry-Esseen theorem instead of the local
  CLT.
\end{remark}

\begin{remark}
  As we mentioned in Section~\ref{sec:condorcet-intro}, the proof of
  Theorem~\ref{thm:elections-random} can be modified to give a similar bound
  \begin{align*}
    \Pr\left[Y = y \mid \cE^{(a_0 b_0)}_d \right] = \frac{1}{2^K} +
    o(1)
  \end{align*}
  for $d = o(\sqrt{n})$
  also in case the event $\cE^{(a_0 b_0)}_d$ is defined as
  $\left|S^{(ab)}\right| \le d$ for all pairwise comparisons $(ab)$ different
  from $(a_0b_0)$.

  The reason for this is that if we remove conditioning from just one
  $S^{(a_0 b_0)}$,
  there are still no covariance factors in the CLT computation
  that would steer the distribution of $Y$ away from uniform.
\end{remark}


\section{Condorcet paradox for close elections: Majority of triplets}
\label{sec:triplet}

Recall that we are considering
odd $n = 3m$ voters, alternatives $a, b, c$ and random variables
$x_1^{(kk')},\allowbreak\ldots,\allowbreak x_n^{(kk')}$ and that the pairwise comparison is done
according to
$f\colon \{-1, 1\}^n \to \{-1, 1\}$:
\begin{align*}
  f(x_1, \ldots, x_n) = \sgn\left( \sum_{i=1}^m \sgn\left(w_i\right) \right) \; , \quad\text{where $w_i = x_{3i-2} + x_{3i-1} + x_{3i}$.}
\end{align*}
         This section contains proofs of non-chaotic behavior of $f$ under certain
conditionings. Section~\ref{sec:triplet-sum-proof} contains the proof
of Theorem~\ref{thm:triplet-sum}, dealing with conditioning on
small $\big|\sum_{i=1}^n x_i^{(kk')}\big|$. In
Section~\ref{sec:triplet-noise-proof} we prove Theorem~\ref{thm:triplet-noise},
which considers conditioning on small
$\big|T_\rho f(x^{(kk')} )\big|$.

\subsection{Proof of Theorem~\ref{thm:triplet-sum}}
\label{sec:triplet-sum-proof}
For $i \in [m]$, we take random tuple
$Z_i := \big(A^{(kk')}_i, B_i^{(kk')}\big)_{(kk')}$ for
$kk' \in \{ab, bc, ca\}$, where
$A_i^{(kk')} := w_i^{(kk')} / \sqrt{3}$ and
  $B_i^{(kk')} := \sgn\big(w_i^{(kk')}\big)$. Note that
$Z_1, \ldots, Z_m$ are i.i.d. Let us compute the first two moments
of the single-voter distribution
$Z = (A^{(ab)}, A^{(bc)}, A^{(ca)},\allowbreak B^{(ab)}, B^{(bc)}, B^{(ca)})$.
For this keep in mind that $\Cov\big[x^{(kk')}_i, x^{(k'k'')}_i \big] = -1/3$
and refer to Table~\ref{tab:01} for the joint distribution of
$w^{(kk')}$ and $w^{(k'k'')}$:
\begin{align}
  \EE\big[A^{(kk')}\big] &= \EE\big[B^{(kk')}\big] = 0\nonumber\\
  \Var\big[A^{(kk')}\big] &= \Var\big[B^{(kk')}\big] = 1\nonumber\\
  \Cov\big[A^{(kk')}, A^{(k'k'')}\big]
  &= -\frac{1}{3}\nonumber\\
  \Cov\big[B^{(kk')}, B^{(k'k'')}\big] &= \frac{80-136}{8 \cdot 27}
  = -\frac{7}{27}\label{eq:111}\\
  \Cov\big[A^{(kk')}, B^{(kk')}\big] &= \frac{1}{\sqrt{3}} \cdot \frac{3}{2}
  = \frac{\sqrt{3}}{2}\nonumber\\
  \Cov\big[A^{(kk')}, B^{(k'k'')}\big] &= \frac{1}{\sqrt{3}} \cdot
  \frac{3\cdot 14 + 66 - 96 - 3\cdot 40}{8 \cdot 27}
  = - \frac{1}{2\sqrt{3}} \; .\nonumber
\end{align}

\begin{table}[!h]\centering
  \begin{tabular}{|c|c|c|c|c|}
    \hline
    $w^{(kk')}$ vs.~$w^{(k'k'')}$ & $-3$ & $-1$ & $1$ & $3$
    \\ \hline
    $-3$ & $1$ & $6$ & $12$ & $8$\\ \hline
    $-1$ & $6$ & $27$ & $36$ & $12$\\ \hline
    $1$ & $12$ & $36$ & $27$ & $6$\\ \hline
    $3$ & $8$ & $12$ & $6$ & $1$ \\ \hline
  \end{tabular}
  \caption{Probabilities of values for $w^{(kk')}, w^{(k'k'')}$ pairs multiplied by
    common denominator $8 \cdot 27$. Keep in mind that $x_i^{(kk')}$ and
    $x_i^{(k'k'')} \in \{-1, 1\}$
    are equal with probability $1/3$.}
  \label{tab:01}
\end{table}

Let $\tilA^{(kk')} := \sum_{i=1}^m A_i^{(kk')} / \sqrt{m}$ and
$\tilB^{(kk')} := \sum_{i=1}^m B_i^{(kk')}/\sqrt{m}$
and let $\tilM^{(kk')}$ and $\tilN^{(kk')}$ be joint standard Gaussians with
the same covariance structure as $\tilA^{(kk')}$ and $\tilB^{(kk')}$ respectively.
After checking that our six by six covariance matrix is not singular,
by the multi-dimensional Berry-Esseen theorem
(see the statement e.g., in \cite{Ben05}),
we can move to the Gaussian space:
\begin{align}
  &\Pr\left[
    f (\mathbf{x}^{(ab)} ) = f (\mathbf{x}^{(bc)} )
    = f (\mathbf{x}^{(ca)} ) \land \cE_d \right]\nonumber\\
  &\qquad =
    2 \Pr\left[
    f (\mathbf{x}^{(ab)} ) = f (\mathbf{x}^{(bc)} )
    = f (\mathbf{x}^{(ca)} ) = 1 \land \cE_d \right]\nonumber\\
  &\qquad =
    2 \Pr\left[
    \| \tilA \|_{\infty} \le \frac{d}{\sqrt{3m}} \land
    \tilB \ge 0
    \right]\nonumber\\
  &\qquad =
    2\Pr\left[
    \| \tilM \|_{\infty} \le \frac{1}{\log n} \land
    \tilN\ge 0
    \right] + O\left( \frac{1}{\sqrt{n}} \right)\;,
    \label{eq:45}
\end{align}
where we write $\tilB\ge 0$ to indicate $\tilB^{(kk')}\ge 0$ for every component
of $\tilB$. Similarly,
\begin{align*}
  \Pr[\cE_d]=\Pr\left[\|\tilM\|_\infty\le\frac{1}{\log n}\right]
  +O\left(\frac{1}{\sqrt{n}}\right)\;.
\end{align*}

Let us define three more centered Gaussians $\tilR^{(kk')}$ according to
the formula
\begin{align}
  \label{eq:46}
  \tilN^{(kk')} = \frac{\sqrt{3}}{2}\tilM^{(kk')} + \frac{1}{2}\tilR^{(kk')}\;.
\end{align}
Since $\Cov[\tilM^{(kk')},\tilN^{(kk')}]=\Cov[A^{(kk')},B^{(kk')}]=\sqrt{3}/2$,
we immediately see that $\Var[\tilR^{(kk')}]=1$ and
$\Cov[\tilM^{(kk')},\tilR^{(kk')}]=0$. Furthermore, we calculate
\begin{align}
  \Cov[\tilM^{(kk')},\tilR^{(k'k'')}]
  &=2\Cov[\tilM^{(kk')},\tilN^{(k'k'')}]-\sqrt{3}\Cov[\tilM^{(kk')},\tilM^{(k'k'')}]
  \nonumber\\
  &=2\Cov[A^{(kk')},B^{(k'k'')}]-\sqrt{3}\Cov[A^{(kk')},A^{(k'k'')}]=0\;,
  \label{eq:113}\\
  \Cov[\tilR^{(kk')},\tilR^{(k'k'')}]
  &=4\Cov[\tilN^{(kk')},\tilN^{(k'k'')}]-4\sqrt{3}\Cov[\tilM^{(kk')},\tilN^{(k'k'')}]
  \nonumber\\
  &\qquad+3\Cov[\tilM^{(kk')},\tilM^{(k'k'')}]=-\frac{1}{27}\;.
    \nonumber
\end{align}
Recall the joint density function for centered Gaussians: in $k$ dimensions,
for the distribution with covariance matrix $\Sigma$ and
$\bfx=(x_1,\ldots,x_k)$ we have
\begin{align*}
  f_\Sigma(\bfx)=\frac{1}{\sqrt{(2\pi)^k|\Sigma|}}
  \exp\left(-\bfx^T\Sigma^{-1}\bfx\right)\;.
\end{align*}
In particular, letting $c_\Sigma:=f_\Sigma(0)$,
we have basic approximation
\begin{align}\label{eq:112}
  f_\Sigma(\bfx)= c_\Sigma+O(\|\bfx\|^2)\;.
\end{align}

Letting $D:=\{\bfm\in\mathbb{R}^{3}:\|\bfm\|_{\infty}\le 1/\log n\}$
and using this approximation, we have
\begin{align*}
  \Pr\left[\|\tilM\|_{\infty}\le\frac{1}{\log n}\right]
  =\int_D f_M(\bfm)\,d\bfm
  =\frac{8c_M}{\log^3 n}+O\left(\frac{1}{\log^5 n}\right)\;.
\end{align*}
As for calculating~\eqref{eq:45}, given $\bfm\in D$, let
\begin{align*}
  D_{\bfm}:=\left\{\bfr\in\mathbb{R}^3:
  \frac{\sqrt{3}}{2}\bfm+\frac{1}{2}\bfr\ge 0\right\}\;.
\end{align*}
In particular, we have $D_0=\{\bfr:\bfr\ge 0\}$.
Let $f_R$ be the density function of the Gaussian triple $\tilR$ and let
\begin{align*}
  \alpha^*:=2\Pr[\tilR\ge 0]=2\int_{D_0} f_R(\bfr)\,d\bfr\;.
\end{align*}
Note that if $\|\bfm\|_{\infty}\le1/\log n$ and
$\bfr\in D_0\Delta D_\bfm$, then there exists at least one coordinate
on which $|r_i|=O(1/\log n)$. Therefore, we obtain
\begin{align*}
  \left|\int_{D_\bfm}f_R(\bfr)\,d\bfr-\frac{\alpha^*}{2}\right|
  &\le\int_{D_0\Delta D_\bfm}f_R(\bfr)\,d\bfr\\
  &\le 3\Pr\left[|\tilR^{(kk')}|\le O\left(\frac{1}{\log n}\right)\right]
  =O\left(\frac{1}{\log n}\right)\;,
\end{align*}
where the error term is uniform in $\bfm$.

Finally, we recall~\eqref{eq:113} to observe that Gaussian triples $\tilM$
and $\tilR$ are independent and therefore their joint density decomposes
$f_{M,R}(\bfm, \bfr)=\allowbreak f_M(\bfm)\allowbreak f_R(\bfr)$.
That allows us to calculate,
using~\eqref{eq:46},
\begin{align*}
  &\Pr\left[\|\tilM\|_{\infty}\le\frac{1}{\log n}\land \tilN\ge 0\right]
    =\int_{D}f_M(\bfm)\int_{D_m}f_R(\bfr)\,d\bfr d\bfm\\
  &\qquad\qquad=\int_D f_M(\bfm)\left(\frac{\alpha^*}{2}+O\left(\frac{1}{\log n}\right)\right)
    \,d\bfm
    =\frac{8c_M}{\log^3 n}\cdot\frac{\alpha^*}{2}+O\left(\frac{1}{\log^4 n}\right)\;.
\end{align*}
In conclusion, we get
\begin{align*}
  &\Pr\left[
    f (\mathbf{x}^{(ab)} ) = f (\mathbf{x}^{(bc)} )
    = f (\mathbf{x}^{(ca)} ) \mid\cE_d \right]\\
  &\qquad\qquad
    =\frac{2\Pr\left[\|\tilM\|_{\infty}\le 1/\log n\land \tilN\ge 0\right]+O(1/\sqrt{n})}
    {\Pr\left[\|\tilM\|_{\infty}\le 1/\log n\right]+O(1/\sqrt{n})}\\
  &\qquad\qquad=
    \frac{\frac{8c_M}{\log^3 n}\alpha^*+O(1/\log^4 n)}
    {\frac{8c_M}{\log^3 n}+O(1/\log^5 n)}
    =\alpha^*+O\left(\frac{1}{\log n}\right)\\
  &\qquad\qquad\qquad\qquad\xrightarrow{n\to\infty}\alpha^*\approx 23.2\%\;,
\end{align*}
where in the very last step we employed a computer algebra system to
compute the approximate value of $\alpha^*$.

\subsection{Proof of Theorem~\ref{thm:triplet-noise}}
\label{sec:triplet-noise-proof}
The proof of Theorem~\ref{thm:triplet-noise} is a refinement
of the proof of Theorem~\ref{thm:triplet-sum}, which is a recommended
preliminary reading. In particular, we will use the notation
that was developed there. From now on the constants in the $O(\cdot)$ notation
are allowed to depend on $\rho$.
Recall that for $\mathbf{x} \in \{-1, 1\}^n$ and
$\mathbf{w} \in \{\pm 3, \pm 1\}^m$ we have defined 
\begin{align*}
  W_b(\mathbf{x}) = W_b(\mathbf{w})
  &=
    \left|\left\{i \in [m]: w_i = b\right\}\right| \; ,\\
  V_b(\mathbf{x}) = V_b(\mathbf{w})
  &=
    W_b(\mathbf{w}) - \EE_{\mathbf{w'}} \left[ W_b(\mathbf{w'}) \right]
    = W_b(\mathbf{w}) -
    \begin{cases}
      n/8& \text{if } b = \pm 3\; ,\\
      3n/8& \text{if } b = \pm 1 \; .
    \end{cases}
\end{align*}
We can write $W_b(\mathbf{w}) = \sum_{i=1}^m W_b(w_i)$
and $V_b(\mathbf{w}) = \sum_{i=1}^m V_b(w_i)$ in an obvious way, with
$W_b(w_i) \in \{0, 1\}$, $V_{\pm 3}(w_i) \in \{-1/8, 7/8\}$ and
$V_{\pm 1}(w_i) \in \{-3/8, 5/8\}$. Note that
$W_3(w_i)+W_1(w_i)+W_{-1}(w_i)+W_{-3}(w_i) = 1$
and $V_3(w_i)+V_1(w_i)+V_{-1}(w_i)+V_{-3}(w_i) = 0$.

Taking $w_i = x_{3i-2} + x_{3i-1} + x_{3i}$,
$w_i' = x'_{3i-2} + x'_{3i-1} + x'_{3i}$, $s_i=\sgn(w_i)$ and
$s'_i=\sgn(w'_i)$, where $(x_i, x'_i)$
are $\rho$-correlated, we also define
\begin{align}
  \eps
  &:= \Pr\left[ x_j \ne x'_j \right] = (1-\rho)/2 \; ,\label{eq:48}\\
  p_3
  &:= \Pr\left[ s_i = s'_i \mid w_i =  3 \right]
    = (1-\eps)^3 + 3\eps(1-\eps)^2 \; ,\label{eq:49}\\
  p_1
  &:= \Pr\left[ s_i = s'_i \mid w_i = 1 \right]
    = (1-\eps)^3 + \eps(1-\eps)^2 + 2\eps^2(1-\eps) \; .\label{eq:50}
\end{align}
Recall that
\begin{align*}
  T_\rho f(\mathbf{x})=\EE_{\mathbf{x'}\sim N_{\rho}(\mathbf{x})}[f(\mathbf{x'})]
\end{align*}
and observe that for our particular function $f$ the value of $T_\rho f$
depends only on $\mathbf{w}$ and equals
\begin{align*}
  T_\rho f(\mathbf{w})=\EE_{\mathbf{s'}\sim N_\rho(\mathbf{w})}\left[
  \sgn\left(\sum_{i=1}^m s'_i\right)
  \right]
  =2\Pr\left[
  \sum_{i=1}^m s'_i >0
  \right]-1\;,
\end{align*}
where random variables $s'_i\in\{-1,1\}$ are independent and
$\Pr[s_i=s'_i]=p_b$ if $|w_i|=b$ for $b=1,3$. In particular, we can also write
$T_\rho f(\mathbf{w})$ as a sum of four independent binomial random variables
\begin{align}
  T_\rho f(\mathbf{w})
  &=
    2\Pr\Big[
    \Bin\left(W_3(\mathbf{w}), p_3\right) +
    \Bin\left(W_1(\mathbf{w}), p_1\right) \nonumber \\
  &\quad +
  \Bin\left(W_{-1}(\mathbf{w}), 1-p_1\right) +
  \Bin\left(W_{-3}(\mathbf{w}), 1-p_3\right)
  > \frac{m}{2} \Big]-1\; . \label{eq:56}
\end{align}

Our plan is to use a CLT argument to conclude that, for most
values of $\mathbf{w}$ under event $\mathcal{F}_{\rho,d}$, the value of
$T_\rho f(\mathbf{w})$ is proportional to
\begin{align*}
  T_\rho f(\mathbf{w})
  &\asymp
    \frac{ p_3W_3(\mathbf{w}) + p_1W_1(\mathbf{w}) +
    (1-p_1)W_{-1}(\mathbf{w}) + (1-p_3)W_{-3}(\mathbf{w}) - m/2}{\sqrt{m}}\\
  &=
    \frac{p_3V_3(\mathbf{w}) + p_1V_1(\mathbf{w}) +
    (1-p_1)V_{-1}(\mathbf{w}) + (1-p_3)V_{-3}(\mathbf{w})}{\sqrt{m}}\\
  &=
    \frac{ q_3V_3(\mathbf{w}) + q_1V_1(\mathbf{w}) - q_1V_{-1}(\mathbf{w})
    -q_3V_{-3}(\mathbf{w})}{\sqrt{m}} \; ,
\end{align*}
where $q_3 := p_3 - 1/2$ and $q_1 := p_1-1/2$. We now state this more precisely
as a lemma, the proof of which we defer until later:
\begin{lemma}
  \label{lem:linear-noise}
  Let $\sigma_3^2 := p_3(1-p_3)$, $\sigma_1^2 := p_1(1-p_1)$ and
  $\sigma^2 := \frac{\sigma_3^2 + 3\sigma_1^2}{4}$. Let
  \begin{align*}
    A_i^{(kk')}
    &:=
      q_3V_3 \big(w_i^{(kk')} \big) +
      q_1V_1 \big(w_i^{(kk')} \big) -
      q_1V_{-1} \big(w_i^{(kk')} \big) -
      q_3V_{-3} \big(w_i^{(kk')} \big) \; ,\\
    \tilA^{(kk')}
    &:=
      \frac{1}{\sqrt{m}} \sum_{i=1}^m A_i^{(kk')}  \; .
  \end{align*}
  Take $ C := \sqrt{\frac{\pi}{2}} \sigma$ and define events
  \begin{align*}
    \cG_1
    &:\equiv \cF_{\rho, d} \equiv
      \quad\max\left(
      \big| T_\rho f (\mathbf{x}^{(ab)} )\big|,
      \big| T_\rho f (\mathbf{x}^{(bc)} )\big|,
      \big| T_\rho f (\mathbf{x}^{(ca)} )\big|
      \right) \le \frac{1}{\log m} \; ,\\
    \cG_2
    &:\equiv\qquad\qquad\|\tilA\|_\infty=\max\left(
      \big| \tilA^{(ab)} \big|,
      \big| \tilA^{(bc)} \big|,
      \big| \tilA^{(ca)} \big|
      \right) \le \frac{C}{\log m} \; .
  \end{align*}
  Let $\Delta$ stand for the symmetric difference of events. Then,
  \begin{align*}
    \Pr\left[ \cG_1 \Delta \cG_2 \right]
    \le O\left(\frac{1}{\log^5 m}\right) \; .
  \end{align*}
\end{lemma}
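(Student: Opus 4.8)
The plan is to show that, after conditioning on the triplet values $\mathbf{w}^{(kk')}$, the quantity $T_\rho f(\mathbf{w}^{(kk')})$ is a strictly increasing function of $\tilA^{(kk')}$ up to an additive error $O(1/\sqrt m)$ and a multiplicative distortion $1+O(\log m/\sqrt m)$ in the implied threshold, so that $\cG_1$ and $\cG_2$ can disagree only in a very thin region, and then to bound the probability of that region. First I would rewrite $T_\rho f(\mathbf{w})$ exactly. Writing $A(\mathbf{w}) := q_3 V_3(\mathbf{w}) + q_1 V_1(\mathbf{w}) - q_1 V_{-1}(\mathbf{w}) - q_3 V_{-3}(\mathbf{w})$, so that $A(\mathbf{w}) = \sum_{i=1}^m A_i(w_i) = \sqrt{m}\,\tilA(\mathbf{w})$, the identity $W_3(\mathbf{w})+W_1(\mathbf{w})+W_{-1}(\mathbf{w})+W_{-3}(\mathbf{w}) = m$ together with a one-line computation shows that, conditionally on the counts $W_b(\mathbf{w})$, the sum $\Sigma := \Bin(W_3,p_3)+\Bin(W_1,p_1)+\Bin(W_{-1},1-p_1)+\Bin(W_{-3},1-p_3)$ appearing in \eqref{eq:56} is a sum of $m$ independent Bernoulli variables with mean $m/2 + A(\mathbf{w})$ and variance $\tau(\mathbf{w})^2 := \sigma_3^2(W_3(\mathbf{w})+W_{-3}(\mathbf{w})) + \sigma_1^2(W_1(\mathbf{w})+W_{-1}(\mathbf{w}))$, whose expectation is $m\sigma^2$. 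Since $p_3,p_1\in(1/2,1)$ for $\rho\in(0,1)$, $\tau(\mathbf{w})^2 = \Theta(m)$ always, so the Berry--Esseen theorem for non-identical summands (with a continuity correction absorbed into the error) yields
\begin{align*}
  T_\rho f(\mathbf{w}^{(kk')}) = 2\Phi\!\left(\frac{A(\mathbf{w}^{(kk')})}{\tau(\mathbf{w}^{(kk')})}\right) - 1 + O\!\left(\frac{1}{\sqrt m}\right),
\end{align*}
with the $O(\cdot)$ uniform in the conditioning.

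Second I would invert this relation. Restrict to the event $\cG$ that $\max_{b}\max_{kk'}|V_b(\mathbf{w}^{(kk')})| \le \sqrt{m}\log m$; since each $V_b(\mathbf{w}^{(kk')}) = \sum_{i=1}^m V_b(w_i^{(kk')})$ is a sum of i.i.d.\ bounded mean-zero variables, a Hoeffding (or Bernstein) bound gives $\Pr[\cG^c] \le \exp(-\Omega(\log^2 m))$, negligible against $1/\log^5 m$. On $\cG$ one has $\tau(\mathbf{w}^{(kk')})^2 = m\sigma^2\bigl(1 + O(\log m/\sqrt m)\bigr)$. Using $\Phi^{-1}(1/2 + u) = \sqrt{2\pi}\,u\,(1 + O(u^2))$ with $u = \tfrac{1}{2\log m} + O(1/\sqrt m)$, the condition $|T_\rho f(\mathbf{w}^{(kk')})| \le 1/\log m$ becomes, on $\cG$, equivalent to
\begin{align*}
  |\tilA^{(kk')}| \le \frac{C}{\log m}\Bigl(1 + O\bigl(1/\log^2 m\bigr)\Bigr), \qquad C = \sqrt{\pi/2}\,\sigma,
\end{align*}
with the $O(\cdot)$ uniform over the three pairs. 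In particular, on $\cG$ the symmetric difference $\cG_1 \Delta \cG_2$ is contained in the event that, for some pair $k_0 k_0'$, $|\tilA^{(k_0 k_0')}|$ lies in an interval of length $O(1/\log^3 m)$ around $C/\log m$ while simultaneously $|\tilA^{(kk')}| \le 2C/\log m$ for all three pairs.

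Finally I would estimate the probability of that region. The vector $\mathbf{\tilA} := (\tilA^{(ab)},\tilA^{(bc)},\tilA^{(ca)})$ is a normalized sum of i.i.d.\ bounded terms (independence across $i\in[m]$ holds because disjoint voter triplets are independent, the within-triplet correlation being the one read off Table~\ref{tab:01}); after checking that its $3\times3$ covariance matrix is nonsingular for $\rho\in(0,1)$, the multidimensional Berry--Esseen theorem used in Theorem~\ref{thm:triplet-sum} replaces $\mathbf{\tilA}$ by a nondegenerate Gaussian at the cost of $O(1/\sqrt m)$ on finite unions of boxes. The region above is such a finite union of boxes, of total Lebesgue measure $O\!\left(\tfrac{1}{\log^3 m}\cdot\tfrac{1}{\log m}\cdot\tfrac{1}{\log m}\right) = O(1/\log^5 m)$, and the limiting Gaussian density is bounded, so its Gaussian probability is $O(1/\log^5 m)$. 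Adding the Berry--Esseen error $O(1/\sqrt m)$ and $\Pr[\cG^c]$, both negligible, gives $\Pr[\cG_1 \Delta \cG_2] = O(1/\log^5 m)$.

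The step I expect to be the main obstacle is the bookkeeping in the inversion: one must control simultaneously the $O(1/\sqrt m)$ Berry--Esseen error, the $O(\log m/\sqrt m)$ fluctuation of $\tau(\mathbf{w})$ about $\sqrt m\,\sigma$, and the cubic term in the expansion of $\Phi^{-1}$, and confirm that the resulting distortion of the threshold is uniformly of order $1/\log^3 m$ rather than $1/\log m$. This is exactly what makes the final region thin enough to beat the naive $O(1/\log^3 m)$ bound (which would follow from anti-concentration of a single $\tilA^{(kk')}$) and to recover the extra $1/\log^2 m$ coming from the confinement of the other two coordinates. Everything else runs parallel to the proof of Theorem~\ref{thm:triplet-sum}.
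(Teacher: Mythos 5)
Your proposal is correct and follows essentially the same route as the paper's proof: a Berry--Esseen expansion of $T_\rho f(\mathbf{w})$ about $\tilA/(\sigma\sqrt{1+t})$ (your $2\Phi(A/\tau)-1$ is the paper's $\erf(\tilA/(\sqrt 2\sigma\sqrt{1+t}))$), concentration of the conditional variance, Taylor inversion showing the threshold is distorted by only $O(1/\log^3 m)$, containment of $\cG_1\Delta\cG_2$ in thin boxes where one coordinate lies in a window of width $O(1/\log^3 m)$ while the other two are confined to $O(1/\log m)$, and a multidimensional CLT plus bounded Gaussian density to get $O(1/\log^5 m)$. The only differences are cosmetic (your event $|V_b|\le\sqrt m\log m$ versus the paper's $|t|\le m^{-1/4}$, and a single containment statement in place of the paper's explicit twelve-case union bound).
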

Assuming Lemma~\ref{lem:linear-noise} we continue along the lines
of the proof of Theorem~\ref{thm:triplet-sum}, letting
$B_i^{(kk')} := \sgn (w_i^{(kk')} )$ and
$Z_i :=  \big(A_i^{(kk')}, B_i^{(kk')} \big)_{(kk')}$. The random variables
$Z_1, \ldots, Z_m$ are i.i.d.~and for CLT purposes we can compute
(again Table~\ref{tab:01} is helpful)
the six by six covariance matrix of the distribution of $Z := Z_1$:
\begin{align}
  \EE\left[A^{(kk')}\right] &= \EE\left[B^{(kk')}\right] = 0\nonumber\\
  \Var\left[A^{(kk')}\right] &= \frac{q_3^2 + 3q_1^2}{4}\label{eq:60}\\
  \Var\left[B^{(kk')}\right] &= 1\nonumber\\
  \Cov\left[A^{(kk')}, A^{(k'k'')}\right]
  &= \frac{-14q_3^2-24q_1q_3-18q_1^2}{216}  \label{eq:61}\\
  \Cov\left[B^{(kk')}, B^{(k'k'')}\right] &= \frac{80-136}{8 \cdot 27}
  = -\frac{7}{27}\nonumber\\
  \Cov\left[A^{(kk')}, B^{(kk')}\right] &= \frac{q_3+3q_1}{4}\nonumber\\
  \Cov\left[A^{(kk')}, B^{(k'k'')}\right]
  &= \frac{-26q_3-30q_1}{216}\nonumber
\end{align}

Let $\big(\tilM^{(kk')}, \tilN^{(kk')}\big)_{(kk')}$ be joint Gaussians
with same covariance structure as
$\big(\tilA^{(kk')}, \tilB^{(kk')}\big)_{(kk')}$.
Further symbolic computations in a computer algebra system lead to expressing
$\tilN^{(kk')}$ as a linear combination
\begin{align}
  \label{eq:51}
  \tilN^{(kk')} = \beta \tilM^{(kk')} + \beta'
  \left(\tilM^{(k'k'')} + \tilM^{(k''k)}\right) + \gamma \tilR^{(kk')} \; ,
\end{align}
where $\gamma > 0$, random tuples $\big(\tilM^{(kk')}\big)_{(kk')}$ and
$\big(\tilR^{(kk')}\big)_{(kk')}$ are independent of each other
and each $\tilR^{(kk')}$ is a standard Gaussian. Furthermore, we obtain
\begin{align}
  \label{eq:52}
  \Cov\left[\tilR^{(kk')}, \tilR^{(k'k'')}\right] = \Cov(\rho)
\end{align}
with $\Cov(\rho)$ a decreasing function of $\rho\in(0, 1)$ and
\begin{align*}
  \Cov(\rho)\le-\frac{1}{27}=\lim_{\rho\to 0^+}\Cov(\rho)\;.
\end{align*}
Since the mutual covariance $\Cov(\rho)$ is decreasing, the expression
\begin{align*}
  \alpha(\rho):=2\Pr[\tilR\ge 0]
\end{align*}
is also decreasing in $\rho$, with $\lim_{\rho\to 0^+}\alpha(\rho)=\alpha^*$
and $\alpha(\rho)\ge\lim_{\rho\to 1^-}\alpha(\rho)\ge 0.17$.

Let $\delta:=C/\log m$ and recall Lemma~\ref{lem:linear-noise}.
We apply this lemma and
similar arguments as in the proof of
Theorem~\ref{thm:triplet-sum} and calculate
\begin{align}
  &\Pr\left[ f\left(\mathbf{x}^{(ab)}\right) =
    f\left(\mathbf{x}^{(bc)}\right) = f\left(\mathbf{x}^{(ca)}\right)
    \mid \cF_{\rho,d}\right]\nonumber\\
  &\qquad=
  \frac{2\Pr\left[ f\left(\mathbf{x}^{(ab)}\right) =
  f\left(\mathbf{x}^{(bc)}\right) = f\left(\mathbf{x}^{(ca)}\right) = 1
   \land \cG_1\right]}
    {\Pr[\cG_1]}\nonumber\\
  &\qquad=
    \frac{2\Pr\left[ \tilB\ge 0
    \land \cG_2\right] + O(1/\log^5 m)}
    {\Pr[\cG_2]+ O(1/\log^5 m)}\nonumber\\
  &\qquad=
    \frac{2\Pr\left[ \tilB\ge 0 \land \| \tilA \|_{\infty} \le \delta
    \right] + O(1/\log^5 m)}
    {\Pr\left[\|\tilA\|_{\infty} \le \delta \right]
    + O(1/\log^5 m)}\nonumber\\
  &\qquad=
    \frac{2\Pr\left[ \tilN\ge 0 \land \| \tilM \|_{\infty} \le \delta
    \right] + O(1/\log^5 m)}
    {\Pr\left[\|\tilM\|_{\infty} \le \delta \right]
    + O(1/\log^5 m)} \nonumber\\
  &\qquad=
    \frac{8c_M\delta^3\cdot \alpha(\rho)+O(1/\log^4 m)}
    {8c_M\delta^3+O(1/\log^5 m)}
    =\alpha(\rho)+O\left(\frac{1}{\log m}\right)\;.
\end{align}
It remains to prove Lemma~\ref{lem:linear-noise}.
\begin{proof}[Proof of Lemma~\ref{lem:linear-noise}]
Recall the definitions of $W_b(\mathbf{w})$ and $V_b(\mathbf{w})$.
We begin with estimating $T_\rho f(\mathbf{w})$ for a fixed $\mathbf{w}$.
In the following
we will sometimes drop dependence on $\mathbf{w}$
(writing, e.g., $W_b$, $V_b$, $\tilA$ instead of
$W_b(\mathbf{w})$, $V_b(\mathbf{w})$, $\tilA(\mathbf{w})$) in the interest
of clarity. Recall equation \eqref{eq:56} and 
let $Z := \sum_{i=1}^m Z_i$ be the sum of $m$ independent random
variables arising out of the four binomial distributions featured there.
We have:
\begin{align*}
  T_\rho f(\mathbf{w})
  &= 2\Pr\left[ Z > \frac{m}{2} \right]-1 \; ,\\
  \EE\left[Z - \frac{m}{2}\right]
  &= p_3W_3 + p_1W_1 + (1-p_1)W_{-1} + (1-p_3)W_{-3} - \frac{m}{2}\\
  &= p_3V_3 + p_1V_1 + (1-p_1)V_{-1} + (1-p_3)V_{-3}\\
  &= q_3V_3 + q_1V_1 - q_1V_{-1} - q_3V_{-3} = \sqrt{m} \tilA \; ,\\
  \Var[Z]
  &= \sigma_3^2(W_3+W_{-3}) + \sigma_1^2(W_1+W_{-1})\\
  &= m\sigma^2 + \sigma_3^2(V_3+V_{-3}) + \sigma_1^2(V_1+V_{-1})
    = m\sigma^2 \left(1 + t\right)\; ,
\end{align*}
for $t := t(\mathbf{w}) :=
\frac{\sigma_3^2(V_3+V_{-3})+\sigma_1^2(V_1+V_{-1})}{\sigma^2 m}$.
Since random variables $Z_i$ are bounded, we can apply the Berry-Esseen
theorem and, using $\erf(x/\sqrt{2}) = 2\Phi(x)-1$
where $\erf(y):=\frac{2}{\sqrt{\pi}} \int_{0}^y e^{-s^2}ds$, find
\begin{align}
  \Pr\left[ Z - \frac{m}{2} > 0\right]
  &= \Pr\left[
    \frac{Z-m/2-\sqrt{m}\tilA}{\sqrt{m}\sigma\sqrt{1+t}} >
    \frac{-\tilA}{\sigma\sqrt{1+t}} \right] \notag \\
  &  = \Phi\left(\frac{\tilA}{\sigma\sqrt{1+t}}\right) +
    O\left(\frac{1}{\sqrt{m(1+t)^3}}\right) \; ,\nonumber \\
  T_\rho f(\mathbf{w})
  &= \erf\left(\frac{\tilA}{\sqrt{2}\sigma\sqrt{1+t}}\right) +
    O\left(\frac{1}{\sqrt{m(1+t)^3}}\right)  \label{eq:57} \; .
\end{align}
From now on we consider a random election with vote vectors
$\mathbf{x}^{(ab)}$, $\mathbf{x}^{(bc)}$, $\mathbf{x}^{(ca)}$ that
induce $\mathbf{w}^{(ab)}$, $\mathbf{w}^{(bc)}$,
$\mathbf{w}^{(ca)}$. First, consider the marginal distribution of
$\mathbf{w}$. Since $t(\mathbf{w})$ can be written
as a sum of $m$ i.i.d.~random variables
$\sigma^2 m t(\mathbf{w}) = \sum_{i=1}^m t_i(w_i)$ with $\EE[t_i] = 0$
and $|t_i| \le 1$, a standard concentration bound gives
\begin{align}
  \label{eq:59}
  \Pr\left[ \left| t(\mathbf{w})\right|
  > \frac{1}{m^{1/4}} \right]
  \le 2\exp\left(-\frac{\sqrt{m}\sigma^4}{2}\right)
  \le O\left(\frac{1}{\sqrt{m}}\right) \; .
\end{align}
As a consequence of~\eqref{eq:57} and~\eqref{eq:59} and the Taylor expansion
$\erf(x) = \frac{2}{\sqrt{\pi}}x + O(x^3)$,
whenever $|t|\le m^{-1/4}$ holds, we have
\begin{align}\label{eq:115}
  T_\rho f(\mathbf{w})
  &=\frac{\tilA}{C}+O(\tilA^3)+O\left(\frac{1}{m^{1/4}}\right)
\end{align}
and, furthermore,
\begin{align}
  |T_\rho f(\mathbf{w})|\le\frac{1}{\log m}
  &\implies
    |\tilA|\le\frac{C}{\log m}+O\left(\frac{1}{\log^3 m}\right)\;,
  \label{eq:116}\\
  \pm T_\rho f(\mathbf{w})>\frac{1}{\log m}
  &\implies
    \pm\tilA\ge\frac{C}{\log m}-O\left(\frac{1}{\log^3 m}\right)\;.\label{eq:117}
\end{align}

We are now ready to bound the measure of the symmetric difference
\begin{align*}
  \Pr\left[ \cG_1 \Delta \cG_2 \right]
  = \Pr[ \cG_1 \land \lnot \cG_2] +
  \Pr[ \lnot \cG_1 \land \cG_2] \; .
\end{align*}
We will use the union bound over a small number of
cases and show that each of them has probability $O(\log^{-5} m)$.

First, if $\mathcal{G}_1$ holds, but $\mathcal{G}_2$ does not, then
$|\tilA^{(kk'})|>C/\log m$ for some comparison $(kk')$.
Let us assume that $\tilA^{(ab)}>C/\log m$, other five cases being symmetrical.
We now apply~\eqref{eq:59},~\eqref{eq:116} and multivariate Berry-Esseen
and get
\begin{align*}
  &\Pr\left[\mathcal{G}_1\land \tilA^{(ab)}>\frac{C}{\log m}\right]\\
  &\qquad
    \le\Pr\left[
    \|\tilA\|_{\infty}\le\frac{C}{\log m}+O\left(\frac{1}{\log^3 m}\right)
    \land \tilA^{(ab)}>\frac{C}{\log m}
    \right]\\
    &\qquad\qquad\qquad+\Pr\left[\|t\|_{\infty}>\frac{1}{m^{1/4}}\right]
    \\
  &\qquad
    =\Pr\left[
    \frac{C}{\log m}\le\tilA^{(ab)}\le\frac{C}{\log m}+
    O\left(\frac{1}{\log^3 m}\right)
    \land |\tilA^{(bc)}|,|\tilA^{(ca)}|\le\frac{C}{\log m}
    \right]\\
  &\qquad\qquad\qquad+O\left(\frac{1}{\sqrt{m}}\right)\\
  &\qquad
    =\Pr\left[
    \frac{C}{\log m}\le\tilM^{(ab)}\le\frac{C}{\log m}+
    O\left(\frac{1}{\log^3 m}\right)
    \land |\tilM^{(bc)}|,|\tilM^{(ca)}|\le\frac{C}{\log m}
    \right]\\
  &\qquad\qquad\qquad+O\left(\frac{1}{\sqrt{m}}\right)
    =O\left(\frac{1}{\log^5 m}\right)\;.
\end{align*}
Applying union bound over remaining, symmetric cases, we obtain
\begin{align*}
  \Pr[\mathcal{G}_1\land\lnot\mathcal{G}_2]\le O\left(\frac{1}{\log^5 m}\right)\;.
\end{align*}

On the other hand, if $\mathcal{G}_2$ holds, but $\mathcal{G}_1$ does not,
then we have $|T_\rho f(\mathbf{x}^{(kk')})|>1/\log m$ for some $(kk')$,
for example, $T_\rho f(\mathbf{x}^{(ab)})>1/\log m$. A similar calculation
using~\eqref{eq:117} gives
\begin{align*}
  &\Pr\left[T_\rho f(\mathbf{x}^{(ab)})>\frac{1}{\log m} \land\mathcal{G}_2 \right]\\
  &\qquad
    \le\Pr\left[
    \tilA^{(ab)}\ge\frac{C}{\log m}-O\left(\frac{1}{\log^3 m}\right)
    \land\|\tilA\|_{\infty}\le\frac{C}{\log m}
    \right]\\
    &\qquad\qquad\qquad+\Pr\left[\|t\|_{\infty}>\frac{1}{m^{1/4}}\right]
    \\
  &\qquad
    =\Pr\left[
    \frac{C}{\log m}-O\left(\frac{1}{\log^3 m}\right)
    \le\tilA^{(ab)}\le\frac{C}{\log m}
    \land |\tilA^{(bc)}|,|\tilA^{(ca)}|\le\frac{C}{\log m}
    \right]\\
  &\qquad\qquad\qquad+O\left(\frac{1}{\sqrt{m}}\right)\\
  &\qquad
    =\Pr\left[
    \frac{C}{\log m}-O\left(\frac{1}{\log^3 m}\right)
    \le\tilM^{(ab)}\le\frac{C}{\log m}
    \land |\tilM^{(bc)}|,|\tilM^{(ca)}|\le\frac{C}{\log m}
    \right]\\
  &\qquad\qquad\qquad+O\left(\frac{1}{\sqrt{m}}\right)
    =O\left(\frac{1}{\log^5 m}\right)
\end{align*}
and
\begin{align*}
  \Pr[\lnot\mathcal{G}_1\land\mathcal{G}_2]=O\left(\frac{1}{\log^5 m}\right)\;.
\end{align*}
\end{proof}

\section{Arrow's theorem for dice}
\label{sec:arrow}

Arguably the most famous result in social choice theory is
Arrow's impossibility theorem \cite{Arr50, Arr63}.
Intuitively, it states that the only reasonable voting systems
based on pairwise comparisons that never produce a Condorcet paradox
are ``dictators'', i.e., functions whose value depend
only on a single voter.

There are also quantitative versions,
proved by Kalai~\cite{Kal02} for balanced functions and by Mossel
\cite{Mos12} for general functions
(with tighter bounds obtained by Keller \cite{Kel12}).
For simplicity we consider three alternatives and the
impartial culture model.
Then, the quantitative Arrow's theorem
says that a reasonable pairwise comparison function $f$ that is
$\eps$-far from every dictator (in the sense of normalized Hamming distance),
must be such that the probability of Condorcet paradox is at least $\Omega(\eps^3)$.

There is an analogous question about transitive dice: What are the
methods for pairwise comparisons of $k$ dice that always produce a linear
order?
In particular, we know that comparing two dice $\bfa$ and $\bfb$ by
using the ``beats'' relation is not one of them.

We restrict ourselves to $k=3$.
Assume that we look at dice with $n$ sides labeled with $[m]$,
i.e., multisets of elements of $[m]$ of size $n$. Denote the set of such
dice as $\cD_{m, n}$.
A pairwise comparison is an anti-symmetric function
$f\colon (\mathcal{D}_{m,n} \times \cD_{m,n}) \setminus
\diag(\cD_{m,n}\times \cD_{m,n}) \to \{-1, 1\}$.
We want to understand which pairwise comparison functions are transitive,
i.e., there are no three distinct dice $\bfa, \bfb, \bfc$ such that
$f(\bfa, \bfb) = f(\bfb, \bfc) = f(\bfc, \bfa)$.

A little thought reveals that the answer is somewhat trivial. 
Let $\mathcal{O}$ be a linear order on $\cD_{m,n}$. We  think
of $\mathcal{O}$ as an injective function
$\mathcal{O}\colon \cD_{m,n} \to \mathbb{R}$. If we define $f$
as
\begin{align*}
  f(\bfa, \bfb) = 1 \text{ if and only if } \mathcal{O}(\bfa) < \mathcal{O}(\bfb) \; ,
\end{align*}
then $f$ is easily seen to be transitive.

On the other hand, every transitive $f$ must
be of this form.
To see this, consider a directed graph with vertex set $\cD_{m,n}$ where
there is an edge from $\bfa$ to $\bfb$ if and only if $f(\bfa, \bfb) = -1$.
This graph is a tournament
and transitivity of $f$ means that it does not contain a directed triangle.
But a triangle-free tournament does not contain a directed cycle
and, therefore, induces a linear order on its ground set.

We can extend this reasoning to a quantitative result. It seems easiest
to assume a model where a set of three dice is sampled
u.a.r.~from $\cD_{m,n}$.

There is a result about tournaments due to Fox and Sudakov \cite{FS08}.
A tournament on $n$ vertices is called \emph{$\eps$-far from transitive} if
at least $\eps n^2$ of its edges must be reversed to obtain a transitive
tournament.
\begin{theorem}[\cite{FS08}]
  \label{thm:fs08}
  There exists $c > 0$ such that if a tournament on $n$ vertices is
  $\eps$-far from transitive, then it contains at least
  $c\eps^2n^3$ directed triangles.
\end{theorem}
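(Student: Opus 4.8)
The plan is to restate the hypothesis in terms of linear orderings, prove two complementary estimates, and glue them by induction on the number of vertices.

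\emph{Reformulation.} Reversing a set $S$ of arcs of a tournament $T$ produces a transitive tournament exactly when $S$ contains all arcs pointing ``backward'' in the linear order consistent with the result; hence the minimum number of arcs one must reverse, $\mathrm{fas}(T)$, equals $\min_\sigma b_\sigma(T)$, where $b_\sigma(T)$ counts arcs $v\to u$ with $v$ placed after $u$ in $\sigma$. Writing $N_3(T)$ for the number of directed triangles, the theorem is equivalent (via $\mathrm{fas}(T)\le\binom n2$) to the bound $N_3(T)\ge c\,\mathrm{fas}(T)^2/n$ for an absolute constant $c>0$.

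\emph{First estimate (mildly non‑transitive regime).} Order the vertices so that out‑degrees are non‑increasing and call an arc $v\to u$ an \emph{upset} if $d^+(v)\le d^+(u)$; the upsets are precisely the backward arcs of this ordering, so $\mathrm{fas}(T)$ is at most their number. An upset lies in many directed triangles: a triangle through $v\to u$ is a vertex $w$ with $u\to w\to v$, i.e.\ $w\in N^+(u)\setminus N^+(v)$, and since $u\in N^+(v)\setminus N^+(u)$ we get $d^+(v)-d^+(u)=|N^+(v)\setminus N^+(u)|-|N^+(u)\setminus N^+(v)|\ge 1-|N^+(u)\setminus N^+(v)|$, so the arc $v\to u$ lies in at least $1+(d^+(u)-d^+(v))\ge1$ directed triangles. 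Summing over upsets and using that each triangle has three arcs gives $\mathrm{fas}(T)\le 3N_3(T)$, which already proves the theorem when $\mathrm{fas}(T)=O(n)$.

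\emph{Second estimate (strongly non‑transitive regime).} Goodman's identity $N_3(T)=\binom n3-\sum_v\binom{d^+(v)}2$ rewrites, using $\sum_v d^+(v)=\binom n2$, as
\[
  N_3(T)=\frac{n(n^2-1)}{24}-\frac12\sum_{v}\Bigl(d^+(v)-\tfrac{n-1}2\Bigr)^2 ,
\]
so $N_3(T)$ is large exactly when the out‑degree sequence is ``balanced'', i.e.\ far in $\ell_2$ from the transitive sequence $0,1,\dots,n-1$. In particular, if every out‑degree lies in $[\theta,n-1-\theta]$ then $\sum_v(d^+(v)-\tfrac{n-1}2)^2\le n(\tfrac{n-1}2-\theta)^2$, and the identity gives $N_3(T)\ge\frac12\bigl(\tfrac{n(n^2-1)}{12}-n(\tfrac{n-1}2-\theta)^2\bigr)$, which is of order $\Omega(\mathrm{fas}(T)^2/n)$ for $\theta$ of order $\mathrm{fas}(T)/n$ once $\mathrm{fas}(T)=\Omega(n^2)$.

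\emph{Induction and the main obstacle.} I would prove $N_3(T)\ge c\,\mathrm{fas}(T)^2/n$ by induction on $n$. If some vertex $v$ is a ``near‑sink or near‑source'', i.e.\ $\min(d^-(v),d^+(v))\le\theta:=\lfloor\mathrm{fas}(T)/(2n)\rfloor$, delete it: inserting $v$ optimally into an optimal ordering of $T-v$ shows $\mathrm{fas}(T)\le\mathrm{fas}(T-v)+\min(d^-(v),d^+(v))$, so $\mathrm{fas}(T-v)\ge\mathrm{fas}(T)-\theta$, and with $N_3(T)\ge N_3(T-v)\ge c\,\mathrm{fas}(T-v)^2/(n-1)$ the choice of $\theta$ (which makes $(\mathrm{fas}(T)-\theta)^2/(n-1)\ge\mathrm{fas}(T)^2/n$) closes the step; otherwise every out‑degree lies in $[\theta,n-1-\theta]$ and the second estimate applies, while the first estimate covers the base range $\mathrm{fas}(T)=O(n)$. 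The genuine difficulty is the intermediate regime $n\ll\mathrm{fas}(T)\ll n^2$, where the linear bound is too weak and, with the crude threshold $\theta\asymp\mathrm{fas}(T)/n$, the ``no near‑sink/source'' branch need not beat $c\,\mathrm{fas}(T)^2/n$; closing it requires either a sharper deletion step that also credits the $\Omega(\min(d^-,d^+)^2)$ triangles through the deleted vertex and feeds them back into the induction, or a different recursion: split an optimal ordering at its midpoint (the restriction of an optimal ordering to each half is optimal for that half, so $\mathrm{fas}(T)=\mathrm{fas}(T[V_1])+\mathrm{fas}(T[V_2])+D_\times$, with $D_\times\le n^2/8$ the number of backward crossing arcs by optimality), recurse, and show the backward crossing arcs collectively lie in $\Omega(nD_\times)$ directed triangles that straddle the cut. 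I expect this last cross‑term count — showing that enough backward crossing arcs reach deep into both halves so as to span the required intermediate vertices — to be the main technical work.
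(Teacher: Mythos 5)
First, a point of orientation: the paper does not prove Theorem~\ref{thm:fs08} at all --- it is quoted verbatim from Fox and Sudakov \cite{FS08} and used as a black box for Corollary~\ref{lem:quantitative-arrow-dice} --- so there is no internal proof to match your argument against; your proposal has to stand on its own. As a proof it does not yet stand. You say so yourself for the intermediate regime $n\ll\mathrm{fas}(T)\ll n^2$, but the gap is in fact wider than you acknowledge, because the ``strongly non-transitive'' branch is also vacuous as set up. Since a uniformly random ordering makes each arc backward with probability $1/2$, every tournament satisfies $\mathrm{fas}(T)\le\tfrac12\binom n2$, so your threshold $\theta=\lfloor\mathrm{fas}(T)/(2n)\rfloor$ never exceeds roughly $n/8$; but the Goodman-identity bound $N_3\ge\frac{n(n^2-1)}{24}-\frac n2\bigl(\frac{n-1}2-\theta\bigr)^2$ is positive only when $\theta>\frac12\bigl(1-3^{-1/2}\bigr)n\approx 0.21\,n$. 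So the ``all out-degrees in $[\theta,n-1-\theta]$'' branch never returns a positive lower bound. This is not just an artifact of constants: degree balance alone cannot imply the theorem, since the lexicographic product of a transitive tournament on $k$ parts with near-regular tournaments of size $n/k$ inside each part has all out-degrees in $[n/(2k),\,n-n/(2k)]$ yet only $\Theta(n^3/k^2)=o(n^3)$ cyclic triangles. What survives of your argument is the correct and standard first estimate ($\mathrm{fas}(T)\le 3N_3(T)$ via backward arcs of the out-degree ordering), which covers only $\mathrm{fas}(T)=O(n)$; everything else --- essentially the whole content of the lemma --- is deferred to ``the main technical work.''

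For comparison, the actual Fox--Sudakov argument closes the recursion in one clean step rather than by degree statistics. Writing $c_3(T)=N_3(T)$, they prove $\mathrm{fas}(T)\le C\sqrt{c_3(T)\,n}$ by induction on $n$: by averaging there is a vertex $v$ lying in at most $3c_3(T)/n$ cyclic triangles; place $v$ between $N^-(v)$ and $N^+(v)$ and observe that the arcs crossing this cut in the wrong direction (from $N^+(v)$ to $N^-(v)$) are in bijection with the cyclic triangles through $v$, so
\begin{equation*}
  \mathrm{fas}(T)\;\le\;\mathrm{fas}\bigl(T[N^-(v)]\bigr)+\mathrm{fas}\bigl(T[N^+(v)]\bigr)+\frac{3c_3(T)}{n}\,,
\end{equation*}
and the two inductive terms combine by Cauchy--Schwarz, $\sqrt{c_3^-n^-}+\sqrt{c_3^+n^+}\le\sqrt{(c_3^-+c_3^+)(n^-+n^+)}$. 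This handles all regimes uniformly. If you want to salvage your own outline, your second suggested recursion (splitting an optimal ordering and charging backward crossing arcs to straddling triangles) is the one closest in spirit to this, but you would need the sharper observation that the natural place to cut is at a vertex in few triangles, where the crossing deficiency is \emph{exactly} the triangle count through that vertex, rather than an unquantified ``$\Omega(nD_\times)$'' cross-term.
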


Theorem~\ref{thm:fs08} can be restated as a quantitative Arrow-like
statement for dice.
\begin{corollary}
  \label{lem:quantitative-arrow-dice}
  There exists $c > 0$ such that if a comparison function $f$
  on $\cD_{m,n}$ with $m,n > 1$
  is $\eps$-far from transitive, then the probability that a random
  triple of dice is intransitive is at least $c\eps^2$.
\end{corollary}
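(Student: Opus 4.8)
The plan is to read the statement off directly from Theorem~\ref{thm:fs08}. Set $N := |\cD_{m,n}|$, which is finite and satisfies $N \ge 3$ since $m,n > 1$, and let $T$ be the tournament on vertex set $\cD_{m,n}$ whose edge between two distinct dice $\bfa,\bfb$ points from $\bfa$ to $\bfb$ exactly when $f(\bfa,\bfb) = -1$; this is well defined because $f$ is anti-symmetric. By the definition used in Theorem~\ref{thm:fs08}, the hypothesis that the comparison function $f$ is $\eps$-far from transitive is precisely the statement that the tournament $T$ is $\eps$-far from transitive, i.e.\ that at least $\eps N^2$ of its edges must be reversed to make it acyclic (equivalently, to turn it into a transitive tournament). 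Hence Theorem~\ref{thm:fs08} applies and produces at least $c\eps^2 N^3$ directed triangles in $T$, where $c$ is the absolute constant of that theorem.

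Next I would translate this count of directed triangles into a probability over random triples. A directed triangle of $T$ is an unordered set $\{\bfa,\bfb,\bfc\}$ of three distinct dice carrying a cyclic orientation, say $\bfa \to \bfb \to \bfc \to \bfa$, which is exactly an intransitive triple in the sense of the corollary and corresponds to the three ordered triples $(\bfa,\bfb,\bfc)$, $(\bfb,\bfc,\bfa)$, $(\bfc,\bfa,\bfb)$. Thus the number of ordered triples $(\bfa,\bfb,\bfc) \in \cD_{m,n}^3$ with $f(\bfa,\bfb) = f(\bfb,\bfc) = f(\bfc,\bfa)$ is at least $3c\eps^2 N^3$. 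If a random triple is obtained by sampling three i.i.d.\ uniform elements of $\cD_{m,n}$, the total number of ordered triples is $N^3$ (the diagonal terms, where two of the three dice coincide, cannot be intransitive and so only help the inequality), whence the probability of an intransitive triple is at least $3c\eps^2$; if instead one draws a uniformly random triple of \emph{distinct} dice, the denominator drops to $N(N-1)(N-2) \le N^3$ and the bound only improves. In either convention the corollary holds with constant $c$ (or $3c$), which we simply rename.

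There is essentially no serious obstacle here: the substantive content is entirely contained in Theorem~\ref{thm:fs08}, and what remains is the routine identification of $f$ with a tournament and the elementary counting above. The only points deserving a word of care are verifying that the normalizations of ``$\eps$-far from transitive'' match (both are a fraction of the $\binom{N}{2}$, or $N^2$, possible edges) and fixing the precise sampling convention for ``a random triple of dice'', neither of which affects the qualitative conclusion. One should also record that the hypothesis $m,n > 1$ is used only to guarantee $N \ge 3$, so that ``$\eps$-far from transitive'' and ``contains a directed triangle'' are not vacuous.
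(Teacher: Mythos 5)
Your proposal is correct and is exactly the argument the paper intends: the corollary is presented there as a direct restatement of the Fox--Sudakov theorem via the identification of the comparison function with a tournament on $\cD_{m,n}$, followed by the same routine triangle-to-probability count. The bookkeeping details you supply (normalization of ``$\eps$-far,'' the factor from ordered versus unordered triples, and the harmless diagonal terms) are all handled correctly and only affect the constant.
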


Since~\cite{FS08} gives an example which is tight up to a constant factor,
Corollary~\ref{lem:quantitative-arrow-dice} is similarly tight. However,
the obtained comparison function does not seem to correspond to any natural
method of comparing dice.

\section*{Acknowledgements} 
We thank Timothy Gowers for helpful discussions of \cite{Pol17},
Kathryn Mann for asking if there is an ``Arrow's theorem'' for dice, and the referee
for a careful reading and helpful comments. 


\newcommand{\etalchar}[1]{$^{#1}$}

\appendix
\section*{Missing calculations in the proof of
  Proposition~\ref{prop:general-anti}}\label{sec:appendix}

In this appendix we include, for ease of verification, several calculations
that were omitted from the proof of Proposition~\ref{prop:general-anti}.

\subsection*{Proof of Lemma~\ref{lem:double-cov}}
In the proof of the calculation lemma (Lemma~\ref{lem:double-cov})
we performed the final calculation in~\eqref{eq:118} only to
establish~\eqref{eq:79c}. Below we give analogous computations
for~\eqref{eq:79}--\eqref{eq:79b} and~\eqref{eq:78c}--\eqref{eq:78b}.

Each of the three calculations establishing~\eqref{eq:79}--\eqref{eq:79b}
proceeds in the same three steps: first, one of the
conclusions~\eqref{eq:95}--\eqref{eq:95b} of Corollary~\ref{cor:normalization}
is applied; second, identities given in~\eqref{eq:119}
and~\eqref{eq:120} are substituted for the integrals; third, the terms are
rearranged.

For~\eqref{eq:79} we have, letting
$D=\{(a_1,a_2,b_1,b_2):a_1>b_1\land a_2>b_2\}$,

\begin{align*}
  &\Pr\left[a_1>b_1\land a_2>b_2\mid\cE_0\right]
  \\
  &\qquad=\left(1+\frac{2}{n}\right)
    \iint_D f(a_1,a_2,b_1,b_2)
    \Bigg(1+\frac{2\alpha_2(a_1+b_1)}{n}
  \\
  &\qquad \qquad\qquad\qquad\qquad\qquad\qquad
    -\frac{a_1^2+b_1^2+a_1a_2+b_1b_2}{n}\Bigg)
     \,dab + o(n^{-1})
  \\
  &\qquad=\left(1+\frac{2}{n}\right)
    \left(\frac{1}{4}+\frac{\alpha_2 A}{n}-\frac{\alpha_2 A}{n}-\frac{B}{2n}
    -\frac{1-B}{2n}-\frac{A^2}{n}-\frac{A^2}{n}
    \right)+o(n^{-1})
  \\
  &\qquad=\frac{1}{4}-\frac{2A^2}{n}
    + o(n^{-1})\;.
\end{align*}
For~\eqref{eq:79a}, letting $D=\{(a_1,b_1):a_1>b_1\}$
\begin{align*}
  \Pr\left[a_1>b_1\mid\cE_a\right]
  &=\left(1+\frac{1}{2n}\right)
    \iint_D f(a_1,b_1)
    \Bigg(1+\frac{\alpha_2a_1}{n}
    -\frac{a_1^2}{2n}\Bigg)
    \,dab + o(n^{-1})
  \\
  &=\left(1+\frac{1}{2n}\right)
    \left(\frac{1}{2}+\frac{\alpha_2 A}{n}
    -\frac{B}{2n}
    \right)+o(n^{-1})
  \\
  &=\frac{1}{2}+\frac{1}{4n}+\frac{\alpha_2 A}{n}-\frac{B}{2n}
    + o(n^{-1})\;.
\end{align*}
For~\eqref{eq:79b}, letting $D=\{(a_1,a_2,b_1,b_2:a_1>b_1\land a_2>b_2\}$,
\begin{align*}
  &\Pr\left[a_1>b_1\land a_2>b_2\mid\cE_a\right]
  \\
  &\qquad=\left(1+\frac{1}{n}\right)
    \iint_D f(a_1,a_2,b_1,b_2)
    \Bigg(1+\frac{2\alpha_2a_1}{n}
    -\frac{a_1^2+a_1a_2}{n}\Bigg)
     \,dab + o(n^{-1})
  \\
  &\qquad=\left(1+\frac{1}{n}\right)
    \left(\frac{1}{4}+\frac{\alpha_2 A}{n}
    -\frac{B}{2n}-\frac{A^2}{n}
    \right)+o(n^{-1})
  \\
  &\qquad=\frac{1}{4}+\frac{1}{4n}+\frac{\alpha_2 A}{n}-\frac{B}{2n}
    -\frac{A^2}{n} + o(n^{-1})\;.
\end{align*}

The calculations showing~\eqref{eq:78c}--\eqref{eq:78b} employ
Lemma~\ref{lem:technical} directly. Each of them applies one
of~\eqref{eq:93b}--\eqref{eq:93c} and uses the fact that
both of those expressions can be approximated as
\begin{align*}
  \iint_D f(a_1,a_2,b_1,b_2)\,dab+o(1)\;.
\end{align*}
More precisely, for~\eqref{eq:78c} we take
$D=\{(a_1,a_2,b_1,b_2):a_1>b_1\land a_1>b_2\}$ and get
\begin{align*}
  \Pr\left[a_1>b_1\land a_1>b_2\mid\cE_0\right]
  &=\frac{\iint_{D} f(a_1,a_2,b_1,b_2)\phi_{n-1}(-a_1)\phi_{n-2}(-b_1-b_2)\,dab}
    {\iint_{\mathbb{R}^4}f(a_1,a_2,b_1,b_2)\phi_{n-1}(-a_1)\phi_{n-2}(-b_1-b_2)\,dab}\\
  &=\frac{\iint_D f(a_1,a_2,b_1,b_2)\,dab+o(1)}
    {\iint_{\mathbb{R}^4} f(a_1,a_2,b_1,b_2)\,dab+o(1)}\\
  &=\Pr[a_1>b_1\land a_1>b_2]+o(1)
    =\frac{1}{3}+o(1)\;.
\end{align*}
Similarly, for~\eqref{eq:78a}, letting
$D=\{(a_1,a_2,b_1,b_2):a_1>b_1\land a_1>b_2\}$,
\begin{align*}
  \Pr\left[a_1>b_1\land a_1>b_2\mid\cE_a\right]
  &=\frac{\iint_{D} f(a_1,a_2,b_1,b_2)\phi_{n-2}(-a_1-a_2)\,dab}
    {\iint_{\mathbb{R}^4}f(a_1,a_2,b_1,b_2)\phi_{n-2}(-a_1-a_2)\,dab}\\
  &=\Pr[a_1>b_1\land a_1>b_2]+o(1)
    =\frac{1}{3}+o(1)\;,
\end{align*}
and, for~\eqref{eq:78b}, letting
$D=\{(a_1,a_2,b_1,c_1):a_1>b_1\land a_1>c_1\}$
\begin{align*}
  \Pr\left[a_1>b_1\land a_1>c_1\mid\cE_a\cap\cE_b\right]
  &=\frac{\iint_{D} f(a_1,a_2,b_1,c_1)\phi_{n-2}(-a_1-a_2)\phi_{n-1}(-b_1)\,dabc}
    {\iint_{\mathbb{R}^4}f(a_1,a_2,b_1,c_1)\phi_{n-2}(-a_1-a_2)\phi_{n-1}(-b_1)\,dabc}\\
  &=\Pr[a_1>b_1\land a_1>c_1]+o(1)
    =\frac{1}{3}+o(1)\;.
\end{align*}

\subsection*{Proof of Lemma~\ref{lem:technical}}

In the proof of the integration lemma (Lemma~\ref{lem:technical}),
in~\eqref{eq:99} we included a detailed calculation only to
establish~\eqref{eq:93}. Below we give crucial steps of similar derivations
for~\eqref{eq:93a}--\eqref{eq:93c}. In each of them: first, we
substitute~\eqref{eq:101} for $\phi_j$; second, we rearrange and absorb
the error terms using~\eqref{eq:121} and~\eqref{eq:99}.

For~\eqref{eq:93a}, we have
\begin{align*}
  &\iint_D f \cdot \phi_{n-1}(-a) \, dab\\
  &\qquad=\iint_D f\cdot
    \Bigg[\exp\left(-\frac{a^2}{2(n-1)}\right)\\
  &\qquad\qquad\qquad\qquad\qquad
    \left(1+\frac{\alpha_1}{n-1}+\frac{\alpha_2 a}{n-1}
    +O\left(\frac{\max(|a|,a^6)}{n^{3/2}}\right)+o(n^{-1})
    \right)\Bigg]\,dab\\
  &\qquad=\iint_D f\cdot
    \left[1+\frac{\alpha_1}{n}+\frac{\alpha_2a}{n}
    -\frac{a^2}{2n}\right]\,dab+o(n^{-1})\;,
\end{align*}
for~\eqref{eq:93b},
\begin{align*}
  &\iint_D f \cdot \phi_{n-2}(-a) \, dab\\
  &\qquad=\iint_D f\cdot
    \Bigg[\exp\left(-\frac{a^2}{2(n-2)}\right)\\
  &\qquad\qquad\qquad\qquad\qquad
    \left(1+\frac{\alpha_1}{n-2}+\frac{\alpha_2 a}{n-2}
    +O\left(\frac{\max(|a|,a^6)}{n^{3/2}}\right)+o(n^{-1})
    \right)\Bigg]\,dab\\
  &\qquad=\iint_D f\cdot
    \left[1+\frac{\alpha_1}{n}+\frac{\alpha_2a}{n}
    -\frac{a^2}{2n}\right]\,dab+o(n^{-1})\;,
\end{align*}
and for~\eqref{eq:93c},
\begin{align*}
  &\iint_D f \cdot \phi_{n-2}(-a)\phi_{n-1}(-b) \, dabc\\
  &\qquad=\iint_D f\cdot
    \left[\exp\left(-\frac{a^2}{2(n-2)}\right)
    \left(1+\frac{\alpha_1}{n-2}+\frac{\alpha_2 a}{n-2}
    +O\left(\frac{\max(|a|,a^6)}{n^{3/2}}\right)+o(n^{-1})
    \right)\right]\\
  &\qquad\qquad\qquad\cdot
    \Bigg[\exp\left(-\frac{b^2}{2(n-1)}\right)\\
  &\qquad\qquad\qquad\qquad\qquad\qquad
    \left(1+\frac{\alpha_1}{n-1}+\frac{\alpha_2 b}{n-1}
    +O\left(\frac{\max(|b|,b^6)}{n^{3/2}}\right)+o(n^{-1})
    \right)\Bigg]\,dabc
    \\
  &\qquad=\iint_D f\cdot
    \left[1+\frac{2\alpha_1}{n}+\frac{\alpha_2(a+b)}{n}
    -\frac{a^2+b^2}{2n}\right]\,dabc+o(n^{-1})\;.
\end{align*}

\end{document}